  \newcommand{\supsize}{%
    \expandafter\ifx\csname S@\f@size\endcsname\relax
      \calculate@math@sizes
    \fi
    \csname S@\f@size\endcsname
    \fontsize\sf@size\z@\selectfont
  }
  \DeclareRobustCommand{\tsup}[1]{%
    \leavevmode\raise.9ex\hbox{\supsize #1}%
  }
  \DeclareTextSymbolDefault{\textprimechar}{OMS}
  \DeclareTextSymbol{\textprimechar}{OMS}{48}
  \DeclareRobustCommand{\tprime}{\tsup{\textprimechar}}
  \ProvideTextCommandDefault{\cprime}{\tprime}
\setlist{noitemsep}
\numberwithin{figure}{section}
\numberwithin{equation}{section}
\newtheorem{theorem}[figure]{Theorem}
\newtheorem{lemma}[figure]{Lemma}
\newtheorem{corollary}[figure]{Corollary}
\newtheorem{proposition}[figure]{Proposition}
\theoremstyle{definition}
\newtheorem{definition}[figure]{Definition}
\theoremstyle{definition}
\newtheorem{remark}[figure]{Remark}
\theoremstyle{definition}
\newtheorem{example}[figure]{Example}
\newtheoremstyle{cited}{.5\baselineskip\@plus.2\baselineskip\@minus.2\baselineskip}{.5\baselineskip\@plus.2\baselineskip\@minus.2\baselineskip}{\itshape}{}{\bfseries}{\bfseries .}{5pt plus 1pt minus 1pt}{\thmname{#1}\thmnumber{ #2}\thmnote{ \normalfont #3}}
\theoremstyle{cited}
\newcommand{\w}{\text}
\DeclareSymbolFontAlphabet{\mathbb}{AMSb} 
\DeclareSymbolFontAlphabet{\mathbbl}{bbold}
\newcommand{\Prism}{{\mathlarger{\mathbbl{\Delta}}}}
\newcommand{\cosimp}[3]{\xymatrix@1{#1 \ar@<.4ex>[r] \ar@<-.4ex>[r] & {\ }#2 \ar@<0.8ex>[r] \ar[r] \ar@<-.8ex>[r] & {\ } #3  \cdots }} 
\title{{Dieudonn\'e Theory via Cohomology of Classifying Stacks}}
\author{Shubhodip Mondal}
\date{}
\begin{document}

\maketitle

\begin{abstract}We prove that if $G$ is a finite flat group scheme of $p$ power rank over a perfect field of characteristic $p$, then the second crystalline cohomology of its classifying stack $H^2_{crys}(BG)$ recovers the Dieudonn\'e module of $G$. We also provide a calculation of crystalline cohomology of classifying stack of abelian varieties. We use this to prove that crystalline cohomology of the classifying stack of a $p$-divisible group is a symmetric algebra (in degree $2$) on its Dieudonn\'e module. We also prove mixed characteristic analogues of some of these results using prismatic cohomology. 
    
\end{abstract}
\tableofcontents
\newpage

\section{Introduction}
 Let $p$ be a prime which will be fixed throughout. Our beginning point is the classification of finite commutative group schemes of $p$ power rank over a perfect field $k$ of characteristic $p$ in terms of contravariant Dieudonn\'e theory. This is achieved by defining an algebraic invariant associated to any given finite group scheme $G$ (assumed to be commutative and $p$ power rank unless otherwise mentioned) which is called the Dieudonn\'e module of $G$ and will be denoted by $M(G).$ We will recall the classical construction of $M(G)$ in \cref{secc3}. The basic object of interest is the Dieudonn\'e ring $\mathscr{D}_k$ which is an associative algebra over the ring of Witt vectors $W(k).$ The Dieudonn\'e functor $M$ which takes $G$ to $M(G)$ transforms the study of finite group schemes over $k$ to the study of semi-linear algebraic objects. More precisely,

\begin{theorem}[Dieudonn\'e]\label{1}The Dieudonn\'e functor $M$ induces an anti-equivalence between the category of finite group schemes over $k$ and left $\mathscr{D}_k$-modules with finite $W(k)$ length.

\end{theorem}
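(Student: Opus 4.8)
The plan is a d\'evissage argument reducing the statement to three simpler cases; along the way one establishes and repeatedly uses the length identity $|G| = p^{\,\mathrm{length}_{W(k)} M(G)}$.

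\emph{Reduction to three orthogonal blocks.} Over the perfect field $k$, every finite commutative group scheme of $p$-power order splits canonically as $G \cong G^{\mathrm{et}}\times G^{\mathrm m}\times G^{\ell\ell}$ into an \'etale part, a part of multiplicative type, and a bi-infinitesimal (``local--local'') part: one first uses the connected--\'etale sequence, which splits over $k$ because $G_{\mathrm{red}}$ is then a closed subgroup scheme, to write $G = G^{0}\times G^{\mathrm{et}}$, and then applies the same decomposition to the Cartier dual of $G^0$ to separate $G^{\mathrm m}$ from $G^{\ell\ell}$. Since $\mathrm{Hom}$ vanishes between \'etale and connected group schemes and, after dualizing, between multiplicative and local--local ones, this realizes the source category as a product of three orthogonal full subcategories. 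On the algebraic side, a left $\DD_k$-module $N$ of finite $W(k)$-length splits, by the Fitting decomposition for the semilinear operator $F$ and then for $V$, as $N \cong N_{F\text{-bij}}\oplus N_{V\text{-bij}}\oplus N_{F,V\text{-nil}}$; there is no summand on which both $F$ and $V$ act invertibly, since $p=FV$ annihilates $N$. One checks that $M$ sends the three geometric blocks to the three algebraic blocks and intertwines Cartier duality with the involution of $\DD_k$-modules exchanging $F$ and $V$; granting this, it suffices to prove the anti-equivalence block by block, and the multiplicative block then follows from the \'etale one by dualizing.

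\emph{The \'etale and multiplicative blocks.} Finite \'etale group schemes over $k$ of $p$-power order are equivalent to finite $\ZZ_p$-modules with a continuous action of $\mathrm{Gal}(\bar k/k)$, and these are in turn equivalent, via the theory of \'etale $\varphi$-modules, to finite-length $W(k)$-modules equipped with a $\sigma$-semilinear bijection $F$ (with $V:=pF^{-1}$) --- that is, to the $\DD_k$-modules in $N_{F\text{-bij}}$. A direct computation, e.g.\ $M(\ZZ/p^n)=W_n(k)$ with $F=\sigma$, identifies $M$ with this chain of equivalences. Dualizing, and using the compatibility of $M$ with Cartier duality from the previous step, then settles the multiplicative block, with $M(\mu_{p^n})=W_n(k)$ and $V$ bijective.

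\emph{The local--local block.} This is the substantial case. One works with the description $M(G)=\mathrm{Hom}_{k\text{-gp}}(G,CW_k)$ in terms of the group of Witt covectors over $k$, which carries commuting $\sigma$- and $\sigma^{-1}$-semilinear operators $F,V$ inducing the $\DD_k$-module structure on $M(G)$. The two key inputs are: (i) the vanishing $\mathrm{Ext}^1_{k\text{-gp}}(G,CW_k)=0$ for finite $G$, so that $M$ is exact on this block; and (ii) the explicit computation of $M$ on the building blocks $\alpha_{p^n}=\ker(F^n\colon\mathbb G_a\to\mathbb G_a)$ and on the finite group schemes $W_m[F^n]=\ker(F^n\colon W_m\to W_m)$, whose Dieudonn\'e modules are the standard cyclic modules $\DD_k/(\DD_k F^n+\DD_k V^m)$ and their variants. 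From exactness and the length identity, $M$ reflects isomorphisms and is faithful; together with (ii) and d\'evissage this yields full faithfulness. Essential surjectivity follows by presenting a finite-length local--local $\DD_k$-module $N$ as the cokernel of a morphism between finite direct sums of the standard modules of (ii), lifting that morphism to a map of finite group schemes by full faithfulness, and identifying $M$ of its cokernel with $N$ by exactness.

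\emph{The main obstacle} is the local--local block, and within it the exactness of $M$ --- i.e.\ the vanishing $\mathrm{Ext}^1_{k\text{-gp}}(G,CW_k)=0$, equivalently the injectivity of the Witt covector group among bi-infinitesimal group schemes --- together with the task of wiring the explicit computations on $\alpha_p$ and the Witt-group kernels into full faithfulness and essential surjectivity (keeping track of $\mathrm{Ext}^1$ on both sides), all while managing the non-finiteness of the Witt objects and the non-commutativity of $\DD_k$. The \'etale and multiplicative blocks, and the orthogonal decompositions, are comparatively formal. As the body of the paper illustrates, the reinterpretation $M(G)\cong H^2_{\mathrm{crys}}(BG)$ provides an independent handle on exactly these structural facts.
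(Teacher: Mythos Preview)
The paper does not prove this theorem. It is stated in the introduction, and restated in \cref{secc3}, as the classical result of Dieudonn\'e, with no proof supplied; the paper takes it (and the Berthelot--Breen--Messing description recalled in \cref{gen}) as background input. So there is no ``paper's own proof'' to compare your proposal against.

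Viewed on its own, your outline is the standard d\'evissage proof one finds in Fontaine or Demazure, and the architecture is sound: the threefold decomposition on both sides, the reduction of the multiplicative block to the \'etale one via Cartier duality, and the identification of the local--local block as the hard case with $\mathrm{Ext}^1(G,CW_k)=0$ as the crucial input. Two small corrections. In the Fitting step you write that ``$p=FV$ annihilates $N$''; this is not generally true for a finite-length $W(k)$-module, but what you actually need --- and what does hold --- is that $p$ is \emph{nilpotent} on such $N$, which already rules out a summand with $F$ and $V$ both bijective. And your closing sentence inverts the logic of the paper: the identification $M(G)\simeq H^2_{\mathrm{crys}}(BG)$ is proved \emph{using} the classical theory (via \cref{gen}), not the other way around, so it does not furnish an independent route to the exactness and injectivity facts you flag as the main obstacle.
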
{}

Our primary goal in this paper is to provide a geometric construction of $M(G).$ In particular, we attempt to understand if $M(G)$ can be constructed in terms of some \textit{cohomology} theory associated to some ``\textit{space.}'' Since $M(G)$ is a module over $W(k)$, it could be natural to take the naive candidate, which is the $i$-th crystalline cohomology of $G$ for some $i$ denoted as $H^i_{crys}(G)$. However, one can see that this cannot not work for any $i$ simply by taking discrete group schemes as input for $M(.)$. Moreover, any reasonable cohomology theory will suffer from exactly similar problems. The issue is that a cohomology theory will view any discrete group scheme just as a discrete space and will be unable to detect its group structure. This suggests that we look for other spaces naturally associated to $G$ instead of just $G$ itself. In topology, one can get around this issue by considering the classifying space $BG$ as explained below in \cref{lol90}. In category theory, the analogue of this would be to simply view a group $G$ as a groupoid with one object and morphism set isomorphic to $G.$ In algebraic geometry, for a group scheme $G$, a model of these constructions would be to consider the classifying \textit{stack} $BG.$ Our main goal is to prove the following

\begin{theorem}\label{2}For a finite group scheme $G$ over $k$ which is of $p$ power rank, the Dieudonn\'e functor $M(G)$ is naturally isomorphic (upto a Frobenius twist) to $ H^2 _{crys}(BG)$, where $BG$ denotes the classifying stack of $G.$

\end{theorem}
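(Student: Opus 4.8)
The plan is to compute $R\Gamma_{crys}(BG)$ by crystalline descent along the atlas $\mathrm{Spec}(k)\to BG$. Writing $G^{\bullet}$ for the simplicial nerve of the groupoid presenting $BG$, this expresses $R\Gamma_{crys}(BG)$ as the totalization of the cosimplicial complex $R\Gamma_{crys}(\mathrm{Spec}\,k)\to R\Gamma_{crys}(G)\to R\Gamma_{crys}(G\times G)\to\cdots$, and gives a descent spectral sequence whose input is the crystalline cohomology of the finite $k$-schemes $G^{\times n}$ (each of which can be made explicit by choosing a lift to $W(k)$ and taking the de Rham complex of the associated divided-power envelope). From the bottom of this spectral sequence one reads off $H^0_{crys}(BG)=W(k)$ at once; the first substantive step is to prove $H^1_{crys}(BG)=0$ for every finite flat group scheme $G$ of $p$-power order over $k$. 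This ``vanishing in odd low degree'' is what forces $H^2_{crys}(B-)$ to behave like an additive (linear-algebraic) invariant, and it is exactly what is needed for the dévissage below to isolate $H^2$ cleanly.

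For the base cases I would use a handful of fibre sequences of classifying stacks together with standard computations. The sequences $B\mu_{p^n}\to B\mathbb{G}_m\xrightarrow{[p^n]}B\mathbb{G}_m$ and $B\alpha_p\to B\mathbb{G}_a\xrightarrow{F}B\mathbb{G}_a$, combined with $H^*_{crys}(B\mathbb{G}_m)=W(k)[t]$ with $|t|=2$ and the analogous computation of $H^*_{crys}(B\mathbb{G}_a)$, yield $H^2_{crys}(B\mu_{p^n})=W_n(k)$ and $H^2_{crys}(B\alpha_p)=k$, which match $M(\mu_{p^n})$ and $M(\alpha_p)$ after a Frobenius twist; for an étale group scheme $\Gamma$ over $k$ one identifies $R\Gamma_{crys}(B\Gamma)$ with continuous group cohomology of $\Gamma(\bar k)$ valued in $W(\bar k)$ and descends along $\mathrm{Gal}(\bar k/k)$, obtaining $H^2=M(\Gamma)$ up to twist with Frobenius acting invertibly. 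In each case one also has to pin down the semilinear structure: the crystalline Frobenius of $BG$ induces the operator $F$ on $H^2_{crys}(BG)$, while functoriality along the Verschiebung isogeny (equivalently, the transfer map for $G^{\times p}\to G$) induces $V$, and one checks these satisfy $FV=VF=p$ and act on the base cases as in the Dieudonné module, so that the comparison is one of $\DD_k$-modules.

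The general case then follows by dévissage. Over $\bar k$ every $G$ of $p$-power rank admits a composition series with graded pieces among $\mu_p$, $\ZZ/p$, $\alpha_p$, and the case of a non-closed field is handled by Galois descent from $\bar k$. For a short exact sequence $0\to G'\to G\to G''\to 0$ the corresponding fibre sequence $BG'\to BG\to BG''$ has a descent/Serre spectral sequence which, using $H^0=W(k)$, $H^1=0$ and the triviality of the $G''$-action on $H^2_{crys}(BG')$, collapses to a short exact sequence $0\to H^2_{crys}(BG'')\to H^2_{crys}(BG)\to H^2_{crys}(BG')\to 0$ compatible with $0\to M(G'')\to M(G)\to M(G')\to 0$ and with $F$ and $V$. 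Induction on the $W(k)$-length together with the five lemma then produce a natural isomorphism of $\DD_k$-modules between $H^2_{crys}(BG)$ and $M(G)$ up to a Frobenius twist, with naturality checked on the defining diagrams (and \cref{1} explains why such an identification is as strong as one could hope for). I expect the two main obstacles to be: (i) establishing $H^1_{crys}(BG)=0$ and controlling the crystalline cohomology of the non-smooth schemes $G^{\times n}$ well enough for the spectral sequences to degenerate in low total degree; and (ii) the exact Frobenius-twist bookkeeping that matches the crystalline Frobenius and the Verschiebung transfer with the operators $F$ and $V$ on $M(G)$.
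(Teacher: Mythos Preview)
Your overall setup (descent along $\mathrm{Spec}\,k\to BG$, the resulting spectral sequence, and the need for $H^1_{crys}(BG)=0$) matches the paper. But from that point on the paper takes a completely different, and much shorter, route: rather than d\'evissage on $G$, it uses the \emph{$E_2$-form} of the descent spectral sequence
\[
E_2^{i,j}=\mathrm{Ext}^i_{(k/W(k))_{\mathrm{Crys}}}\bigl(H^{-j}(\mathbb{Z}[B\underline{G}]),\mathcal{O}^{crys}\bigr)\Longrightarrow H^{i+j}_{crys}(BG),
\]
together with the identifications $H^0(\mathbb{Z}[B\underline{G}])\simeq\mathbb{Z}$ and $H^{-1}(\mathbb{Z}[B\underline{G}])\simeq\underline{G}$ and the vanishing $\mathrm{Hom}(\underline{G},\mathcal{O}^{crys})=0$ from \cite{BBM}, to prove directly that
\[
H^2_{crys}(BG)\simeq \mathrm{Ext}^1_{(k/W(k))_{\mathrm{Crys}}}(\underline{G},\mathcal{O}^{crys}).
\]
(The surjectivity here is handled by a reduction mod $p^n$ trick, using that $H^i_{crys}(BG)$ is $p$-power torsion for $i>0$.) Then the comparison with $M(G)$, including the Frobenius twist and the $\mathscr{D}_k$-module structure, is simply the theorem of Berthelot--Breen--Messing \cite[Thm.~4.2.14]{BBM} identifying that Ext group with $\sigma^*M(G)$. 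No base cases, no fibre sequences of classifying stacks, no Serre spectral sequences.

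Your d\'evissage approach, as written, has a genuine gap: you never construct a natural transformation between the functors $G\mapsto H^2_{crys}(BG)$ and $G\mapsto M(G)$. The five lemma requires a \emph{map} of short exact sequences; knowing that both functors agree on $\mu_p,\alpha_p,\mathbb{Z}/p$ and that both send short exact sequences of group schemes to short exact sequences of $\mathscr{D}_k$-modules does not produce such a map, and without it the induction does not even yield an abstract isomorphism (two extensions of the same $\mathscr{D}_k$-modules need not be isomorphic). In the paper this issue evaporates because both sides are identified with the \emph{same} Ext group in the crystalline topos, so naturality is automatic. A secondary issue: the claim that the Serre spectral sequence for $BG'\to BG\to BG''$ collapses to a short exact sequence in degree $2$ needs justification beyond $H^1=0$; one must also kill the transgression $d_3:H^2_{crys}(BG')\to H^3_{crys}(BG'')$, and $H^3_{crys}(BG'')$ has no reason to vanish in general. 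If you want to salvage the d\'evissage strategy, the cleanest fix is exactly what the paper does implicitly: first produce the natural map $H^2_{crys}(BG)\to \mathrm{Ext}^1(\underline{G},\mathcal{O}^{crys})\simeq\sigma^*M(G)$ from the $E_2$ spectral sequence, and only then run the induction to check it is an isomorphism.
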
{}

\begin{example}\label{lol90}We explain the topological analogue of \Cref{1} and \Cref{2}. In the case where $G$ is a finite abelian group, one can consider the classifying space $BG$ of $G$ which has the property that $\pi_1(BG) \simeq G$ and $\pi_i(BG) = 0$ for $i>1.$ In this case, $H^2_{\w{singular}}(BG) \simeq \w{Ext}^1 (G, \mathbb{Z}),$ which is non-canonically isomorphic to the group $G.$ By using the short exact sequence $0 \to \mathbb{Z}\to  \mathbb{Q} \to \mathbb{Q}/ \mathbb{Z}\to 0$ we obtain that $ \w{Ext}^1(G,\mathbb{Z}) \simeq \w{Hom}(G,\mathbb{Q}/ \mathbb{Z} ) \simeq \varinjlim \w{Hom}(G, \mathbb{Z}/n  \mathbb{Z})\simeq \w{Hom}(G, \mathbb{S}^1).$ By Pontryagin duality, one obtains that sending $G \to H^2_{\w{singular}}(BG)$ gives an anti-equivalence from the category of finite abelian groups to itself.
\end{example}

We also define the classifying stack $BG$ of a $p$-divisible group $G$ and prove the following

\begin{theorem}\label{alvida}Let $G$ be a $p$-divisible group over $k.$ Then $H^*_{crys}(BG) \simeq \w{Sym}^{2*} M(G),$ where $M(G)$ denotes the Dieudonn\'e module of $G.$ In particular, $H^2_{crys}(BG)$ recovers the Dieudonn\'e module of $G.$

\end{theorem}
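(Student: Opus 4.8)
The plan is to reduce to the level of finite group schemes and feed in the (prior) computation of crystalline cohomology of classifying stacks of abelian varieties. Writing $G=\varinjlim_n G[p^n]$ and using that $B(-)$ commutes with filtered colimits of group objects, one gets $BG=\varinjlim_n BG[p^n]$ and hence $R\Gamma_{crys}(BG)\simeq R\varprojlim_n R\Gamma_{crys}(BG[p^n])$. By \Cref{2} and the Milnor exact sequence (using $H^1_{crys}(BG[p^n])=0$ and that $H^2_{crys}(BG[p^n])\cong M(G[p^n])=M(G)/p^n$ has surjective transition maps) one already gets $H^2_{crys}(BG)\cong M(G)$, which supplies a natural $\mathbb{E}_\infty$-$W(k)$-algebra map $\alpha_G\colon \mathrm{Sym}^{2*}_{W(k)}M(G)\to R\Gamma_{crys}(BG)$ (freeness of $M(G)$ and the vanishing of $H^1$ make this well defined and an isomorphism in degrees $\le 2$); the theorem asserts $\alpha_G$ is an isomorphism. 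Note the subtlety that $\alpha_{G[p^n]}$ need \emph{not} be one: $H^*_{crys}(BG[p^n])$ carries extra torsion (for instance $H^3_{crys}(BE[p^n])\cong W(k)/p^n$ for an elliptic curve $E$, from a $\mathrm{Tor}_1$-term), so the whole point is that this torsion disappears after $R\varprojlim_n$.

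I would first treat $G=A[p^\infty]$. The input is the earlier computation $R\Gamma_{crys}(BA)\simeq \mathrm{Sym}^{2*}_{W(k)}H^1_{crys}(A)$ (mechanically: $R\Gamma_{crys}(A)=\bigwedge^*_{W(k)}H^1_{crys}(A)$ is an exterior Hopf algebra on degree-$1$ classes, and the bar/cobar construction computing $R\Gamma_{crys}(BA)$ turns it into a symmetric algebra on degree-$2$ classes). Delooping $0\to A[p^\infty]\to A\to A[1/p]\to 0$, with $A[1/p]=\varinjlim(A\xrightarrow{[p]}A\xrightarrow{[p]}\cdots)$, gives a fiber sequence $BA[p^\infty]\to BA\to BA[1/p]$. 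The key point is $R\Gamma_{crys}(BA[1/p])\simeq W(k)$, concentrated in degree $0$: indeed $BA[1/p]=\varinjlim(BA\xrightarrow{[p]}BA\xrightarrow{[p]}\cdots)$, the map $[p]^*$ is multiplication by $p^j$ on $H^{2j}(BA)=\mathrm{Sym}^j H^1_{crys}(A)$ (since $[p]^*$ is multiplication by $p$ on the primitive classes $H^1_{crys}(A)$), and $R\varprojlim$ of a tower $(\cdots\xrightarrow{p^j}N\xrightarrow{p^j}N)$ with $N$ finite free over $W(k)$ — hence $p$-adically complete — vanishes for $j\ge 1$: both $\varprojlim$ and $\varprojlim^1$ are zero. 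Since $BA[1/p]$ is simply connected, the Eilenberg--Moore spectral sequence then yields $R\Gamma_{crys}(BA[p^\infty])\simeq R\Gamma_{crys}(BA)\otimes^L_{W(k)}W(k)\simeq \mathrm{Sym}^{2*}_{W(k)}H^1_{crys}(A)$; by \Cref{2}, $H^1_{crys}(A)\cong M(A[p^\infty])$ (up to the Frobenius twist), so $\alpha_{A[p^\infty]}$ is an isomorphism. The same argument with $\mathbb{G}_m$ in place of $A$, using $R\Gamma_{crys}(B\mathbb{G}_m)=W(k)[t]$, handles $G=\mu_{p^\infty}$ independently.

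For a general $p$-divisible group $G$ over $k$, I would reduce to $k$ algebraically closed by faithfully flat descent along $W(k)\to W(\bar k)$ ($\alpha_G$ is an isomorphism iff it is after $-\otimes_{W(k)}W(\bar k)$, and crystalline cohomology and $M(-)$ base change). Over $\bar k$, every $p$-divisible group decomposes as $G=G^{bi}\times \mu_{p^\infty}^s\times(\mathbb{Q}_p/\mathbb{Z}_p)^r$ (biconnected times multiplicative times \'etale, which exists over a perfect field), and since $G^{bi}\oplus(G^{bi})^\vee$, $\mu_{p^\infty}\oplus\mathbb{Q}_p/\mathbb{Z}_p$ and their sums have symmetric Newton polygon, Manin's realization theorem plus an isogeny adjustment shows $G$ is a direct factor of $A[p^\infty]$ for some abelian variety $A$; write $A[p^\infty]=G\oplus G''$. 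Then $BA[p^\infty]=BG\times BG''$, and the Künneth isomorphism (no $\mathrm{Tor}$, since $\mathrm{Sym}^{2*}$ of a free module is flat) identifies the isomorphism $\alpha_{A[p^\infty]}$ with $\alpha_G\otimes^L_{W(k)}\alpha_{G''}$. A short cancellation argument finishes: with $C=\mathrm{cofib}(\alpha_G)$ and $C''=\mathrm{cofib}(\alpha_{G''})$, both supported in degrees $\ge 3$, the octahedral axiom applied to the factorization of $\alpha_G\otimes^L\alpha_{G''}$ gives $C\otimes^L_{W(k)}\mathrm{Sym}^{2*}M(G'')\simeq \big(R\Gamma_{crys}(BG)\otimes^L_{W(k)}C''\big)[-1]$; comparing cohomology in the lowest degree, using that $\mathrm{Sym}^{2*}M(G'')$ has $W(k)$-free cohomology (so no $\mathrm{Tor}_1$ obstruction) and $H^0$ of everything is $W(k)$, an induction on degree forces $H^m(C)=H^m(C'')=0$ for all $m$. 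Hence $\alpha_G$ is an isomorphism, which together with \Cref{2} is exactly $H^*_{crys}(BG)\cong\mathrm{Sym}^{2*}M(G)$ with $H^2$ recovering the Dieudonn\'e module.

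The step I expect to be the main obstacle is the passage to the limit: checking that crystalline cohomology of the ind-stacks in play genuinely sends colimits to limits, that the Eilenberg--Moore spectral sequence converges for $BA[1/p]$ and the delooped fibrations, and above all the vanishing $R\varprojlim(\cdots\xrightarrow{p^j}N\xrightarrow{p^j}\cdots)=0$ — this is precisely the mechanism by which the finite-level torsion (in particular every odd-degree class) dies, and it is special to $p$-complete coefficients. A secondary difficulty is the structural reduction in the last paragraph (realizing $G$ as a direct factor of some $A[p^\infty]$) and checking that the cancellation argument is compatible with the $\mathbb{E}_\infty$-structures.
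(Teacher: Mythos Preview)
Your overall architecture matches the paper's: get $H^2_{crys}(BG)\cong M(G)$ from the Milnor sequence, handle $G=A[p^\infty]$ by comparison with $BA$, and reduce the general case by realizing $G$ as a direct summand of some $A[p^\infty]$. The substantive difference is in how the comparison $R\Gamma_{crys}(BA[p^\infty])\simeq R\Gamma_{crys}(BA)$ is made, and this is exactly where the obstacle you flag sits. You deloop $0\to A[p^\infty]\to A\to A[1/p]\to 0$, correctly compute $R\Gamma_{crys}(BA[1/p])\simeq W(k)$ via your $R\varprojlim$ argument, and then invoke an Eilenberg--Moore identification; but there is no off-the-shelf Eilenberg--Moore theorem for crystalline cohomology of stacks, so this step is a genuine gap as written. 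The paper sidesteps it entirely by working modulo $p^n$ at the chain level in the syntomic topos: it shows the map $W_n(k)[BA[p^\infty]]\to W_n(k)[BA]$ is a quasi-isomorphism by checking on stalks, where it becomes the statement that for an ordinary abelian group $\Gamma$ on which $p$ is surjective one has $H_*(\Gamma[p^\infty],k)\xrightarrow{\sim}H_*(\Gamma,k)$. That in turn is Hochschild--Serre for $0\to\Gamma[p^\infty]\to\Gamma\to\varinjlim_p\Gamma\to 0$ plus the elementary fact that a uniquely $p$-divisible group has vanishing mod-$p$ group homology. It is the same fiber sequence you use, processed pointwise rather than via a cohomological base-change theorem.

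For the reduction from general $G$, the paper cites Oort to get $G\times G'\simeq A[p^\infty]$ over $k$ itself (so no descent from $\bar k$ is needed) and finishes with a short graded-module argument: the retract gives a surjection $\mathrm{Sym}^{2*}M(G)\twoheadrightarrow H^*_{crys}(BG)$, and tensoring with the analogous surjection for $G'$ and comparing with the known isomorphism for $A[p^\infty]$ forces injectivity. Your octahedral cancellation is a correct derived-category rephrasing of the same thing; the induction goes through precisely because $H^0$ of every object in sight is the free module $W(k)$, which kills the potential $\mathrm{Tor}_1$ term in the lowest degree.
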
{}

In \cite{SW14}, Scholze and Weinstein defined a mixed characteristic analogue of (covariant) Dieudonn\'e modules for $p$-divisible groups over a perfectoid ring. More recently, in \cite{AL19}, Ansch\"utz and Le Bras defined a mixed characteristic analogue of contraviariant Dieudonn\'e modules over more general base rings. In \cref{secc4.2} we briefly recall their work and the definition of the filtered prismatic Dieudonn\'e module $\underline{M}_{\Prism}(G)=( M_{\Prism}(G),\w{Fil}M_{\Prism}(G),\varphi_{M_{\Prism}(G)}  )$ associated to a $p$-divisible group $G$ as defined by them. Their main theorem is the following classification result.

\begin{theorem}[{\cite{AL19}}]Let $R$ be a quasi-regular semiperfectoid ring. The filtered prismatic Dieudonn\'e module functor $$G \to \underline{M}_{\Prism}(G) $$ defines an anti-equivalence between the category $p$-divisible groups over $R$ and the category of filtered prismatic Dieudonn\'e modules over $R.$

\end{theorem}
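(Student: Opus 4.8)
The plan is to establish the anti-equivalence by proving full faithfulness and essential surjectivity separately, in each case reducing to situations where the classification is already understood: chiefly perfectoid base rings and complete discrete valuation rings. For a perfectoid ring $R$ the initial object of the prismatic site is $(A_{\mathrm{inf}}(R),\ker\theta)$, a filtered prismatic Dieudonn\'e module becomes a minuscule Breuil--Kisin--Fargues module, and the assertion is a repackaging of the classification of $p$-divisible groups over perfectoid rings of Scholze--Weinstein together with its refinement by Lau. I would take this, along with classical Dieudonn\'e theory over perfect fields and the Breuil--Kisin classification over $\mathcal{O}_K$, as the known inputs.

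For full faithfulness over a general quasiregular semiperfectoid $R$, fix a surjection $S \twoheadrightarrow R$ from a perfectoid ring and observe: (i) $\underline{M}_{\Prism}$ commutes with base change and underlies a crystal on the prismatic site, so morphisms of filtered prismatic Dieudonn\'e modules pull back along $\Prism_S \to \Prism_R$ and, conversely, a morphism over $\Prism_R$ compatible with the \v{C}ech data descends; (ii) $p$-divisible groups satisfy $v$-descent (a nontrivial input, itself resting on the perfectoid classification); (iii) over the perfectoid terms of the \v{C}ech nerve of $S/R$ the functor is fully faithful. Combining these gives $\mathrm{Hom}_R(G,H) \longisoto \mathrm{Hom}(\underline{M}_{\Prism}(H),\underline{M}_{\Prism}(G))$ for $p$-divisible groups $G,H$ over $R$; faithfulness alone is immediate, since $G \mapsto G_S$ is faithful and $\underline{M}_{\Prism}(G_S)$ determines $G_S$.

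For essential surjectivity I would follow the classical strategy of reconstructing a $p$-divisible group from its Dieudonn\'e module. First reduce, via the connected--\'etale sequence and Cartier duality, to the connected case; then build the group by a deformation/obstruction argument: after a suitable localization $R$ is obtained from a perfectoid ring by a chain of quasiregular thickenings, and one lifts the $p$-divisible group step by step using the prismatic analogue of Grothendieck--Messing theory — deformations of a $p$-divisible group along such a thickening form a torsor under lifts of the Hodge filtration — matching the module-side datum at each stage. One then checks that $\underline{M}_{\Prism}$ of the resulting group is the module one began with, and that $\underline{M}_{\Prism}$ genuinely lands in the minuscule/effective subcategory with the correct Hodge filtration; these last points are local and reduce again to the perfectoid and $\mathcal{O}_K$ cases.

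The main obstacle is essential surjectivity, and within it the prismatic Grothendieck--Messing comparison together with the descent it relies on: one must know that $p$-divisible groups form a stack for the relevant topology and that prismatic cohomology interacts correctly with the \v{C}ech nerves of perfectoid covers, so that module-theoretic descent and deformation data translate faithfully into geometric ones. These compatibilities, together with the perfectoid base case — which itself encodes nontrivial $p$-adic Hodge theory over $\mathcal{O}_C$ — carry essentially all the weight; granting them, the passage to general quasiregular semiperfectoid rings follows the formal pattern of descending and deforming an already known classification.
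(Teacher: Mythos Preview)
The paper under review does not prove this statement: it is quoted twice (once in the introduction and once in \S4.2) as the main theorem of \cite{AL19}, with the attribution ``\cite{AL19}'' in the theorem header, and is used as a black box. There is therefore no proof in this paper to compare your proposal against.

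For what it is worth, your outline is a reasonable high-level sketch of the strategy actually carried out by Ansch\"utz and Le Bras in \cite{AL19}: reduction to the perfectoid case via descent, the input of Scholze--Weinstein/Lau over perfectoid rings, and a Grothendieck--Messing-type deformation argument for essential surjectivity. But none of that is reproduced or re-proved in the present paper, which only invokes the result in order to motivate the interpretation of $M_{\Prism}(G)$ via $H^2_{\Prism}(BG)$.
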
{}

Using the classifying stack $BG$ of a $p$-divisible group $G$ and prove the following 
\begin{theorem}\label{thmpri}Let $G$ be a $p$-divisible group over a quasi-regular semiperfectoid ring $R.$ Then the prismatic cohomology  $H^2_{\Prism}(BG)$ is naturally isomorphic to the prismatic Dieudonn\'e module ${M}_{\Prism}(G).$ Further, the Nygaard filtration $\mathcal{N}^{\ge 1} H^2 _{\Prism}(BG)$ on prismatic cohomology of the stack $BG$ is isomorphic to $\w{Fil} M_{\Prism} (G).$ 

\end{theorem}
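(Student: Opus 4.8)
The plan is to compute the Nygaard-filtered prismatic cohomology of $BG$ by faithfully flat descent along the atlas $\mathrm{Spec}(R)\to BG$, and then to recognize the degree-$2$ part as the prismatic $\mathcal{E}xt^1$ through which Anschütz--Le Bras \cite{AL19} define $M_{\Prism}(G)$. One works throughout in the quasi-syntomic site $R_{\mathrm{qsyn}}$, on which the prismatic structure sheaf $\mathcal{O}^{\Prism}\colon S\mapsto\Prism_S$ lives together with its Nygaard filtration $\mathcal{N}^{\ge\bullet}\mathcal{O}^{\Prism}$, and prismatic cohomology of a quasi-syntomic $R$-stack is $R\Gamma$ of $\mathcal{O}^{\Prism}$ (with its filtration) over it. Since $G$ is $p$-divisible we have $BG=\varinjlim_m BG[p^m]$, and a first step is to check that $R\Gamma_{\Prism}(BG)\simeq R\lim_m R\Gamma_{\Prism}(BG[p^m])$ compatibly with Nygaard filtrations, so that one may pass freely between $BG$ and the finite truncations $BG[p^m]$ (as in the prismatic analogue of \Cref{2}).

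Descent along $\mathrm{Spec}(R)\to BG$, whose \v{C}ech nerve is the bar construction $[n]\mapsto G^{\times n}$, together with the standard resolution $\cdots\to\ZZ[G^{\times 2}]\to\ZZ[G]\to\ZZ$ of $\ZZ$ by $\ZZ[G]$-modules in the topos, yields
\[
R\Gamma_{\Prism}(BG)\ \simeq\ \mathrm{Tot}\bigl([n]\mapsto R\Gamma_{\Prism}(G^{\times n})\bigr)\ \simeq\ R\Gamma\bigl(R_{\mathrm{qsyn}},\,R\mathcal{H}om_{\ZZ[G]}(\ZZ,\mathcal{O}^{\Prism})\bigr),
\]
and the same with $\mathcal{N}^{\ge 1}\mathcal{O}^{\Prism}$ in place of $\mathcal{O}^{\Prism}$. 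Thus $R\Gamma_{\Prism}(BG)$ is the ``$G$-equivariant prismatic cohomology of the trivial $G$-module $\mathcal{O}^{\Prism}$'', carrying the skeletal (bar) filtration; the two spectral sequences it produces (the bar one, and the local-to-global one on $R_{\mathrm{qsyn}}$) have low-degree terms governed by $R\mathcal{H}om_{R_{\mathrm{qsyn}}}(G,\mathcal{O}^{\Prism})$ and by $\mathcal{H}om$ into $\mathcal{O}^{\Prism}$ of the group-homology sheaves $\mathrm{H}_{\ge 2}(G,\ZZ)$.

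The key input is then the prismatic Dieudonné theory of \cite{AL19}: for $G$ over $R$ one has $R\mathcal{H}om_{R_{\mathrm{qsyn}}}(G,\mathcal{O}^{\Prism})\simeq M_{\Prism}(G)[-1]$ — that is, $\mathcal{H}om(G,\mathcal{O}^{\Prism})=0$, $\mathcal{E}xt^1(G,\mathcal{O}^{\Prism})=M_{\Prism}(G)$, and $\mathcal{E}xt^{\ge 2}(G,\mathcal{O}^{\Prism})=0$ — together with the filtered refinement $\mathcal{H}om(G,\mathcal{N}^{\ge1}\mathcal{O}^{\Prism})=0$ and $\mathcal{E}xt^1(G,\mathcal{N}^{\ge1}\mathcal{O}^{\Prism})=\w{Fil}\,M_{\Prism}(G)$. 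Feeding this in and using that $R\Gamma(R_{\mathrm{qsyn}},-)$ is concentrated in degree $0$ on $\mathcal{O}^{\Prism}$-modules since $R$ is quasi-regular semiperfectoid, one reads off $H^0_{\Prism}(BG)=\Prism_R$ and $H^1_{\Prism}(BG)=0$ at once, while the only a priori contribution to $H^2_{\Prism}(BG)$ beyond $\mathcal{E}xt^1(G,\mathcal{O}^{\Prism})=M_{\Prism}(G)$ is $\mathcal{H}om(\mathrm{H}_2(G,\ZZ),\mathcal{O}^{\Prism})$, where $\mathrm{H}_2(G,\ZZ)\cong\Lambda^2 G$ is again a $p$-divisible abelian sheaf and hence admits no nonzero map to the $p$-adically separated sheaf $\mathcal{O}^{\Prism}$. (Note that the $\mathcal{E}xt^1$ of this $\Lambda^2 G$ — which need not vanish — contributes to $H^3$, not $H^2$, so does not interfere.) This gives $H^2_{\Prism}(BG)\cong M_{\Prism}(G)$, naturally in $G$; running the identical argument with $\mathcal{N}^{\ge1}\mathcal{O}^{\Prism}$, the bar filtration being functorial in the coefficient sheaf, identifies $\mathcal{N}^{\ge1}H^2_{\Prism}(BG)$ with $\mathcal{E}xt^1(G,\mathcal{N}^{\ge1}\mathcal{O}^{\Prism})=\w{Fil}\,M_{\Prism}(G)$.

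The main obstacle is twofold. First, one must set up prismatic cohomology of the stack $BG$ with its Nygaard filtration and establish enough descent and $R\lim$-compatibility to license the manipulations above — in particular that the bar filtration on $R\Gamma_{\Prism}(BG)$ is complete and that forming associated graded commutes with the filtered coefficient change $\mathcal{N}^{\ge1}\mathcal{O}^{\Prism}\hookrightarrow\mathcal{O}^{\Prism}$; this is where the quasi-regular semiperfectoid hypothesis genuinely enters (concentration of $R\Gamma_{\Prism}$ in degree $0$ over affines, exactness of $\mathcal{O}^{\Prism}$-coefficient cohomology). Second, and more delicate, is controlling the higher part of the bar filtration in total degrees $\le 2$: one needs to see precisely which homology sheaves of $BG$ can contribute to $H^{\le2}_{\Prism}(BG)$ and that the surviving one ($\Lambda^2 G$) is killed by $\mathcal{H}om(-,\mathcal{O}^{\Prism})$ — a bookkeeping that mirrors, and can be bootstrapped from, the crystalline computation of $H^{\le2}_{crys}(BG)$ underlying \Cref{alvida}. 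Once $H^2_{\Prism}(BG)\cong M_{\Prism}(G)$ is in hand — automatically compatibly with the Frobenius, by functoriality of prismatic cohomology — the Nygaard-filtration statement follows from the filtered version of the same argument together with \cite{AL19}'s description of $\w{Fil}\,M_{\Prism}(G)$.
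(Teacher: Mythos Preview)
Your approach is correct and coincides with the paper's: descend along $* \to BG$ to obtain the spectral sequence $E_2^{i,j} = \mathrm{Ext}^i(H^{-j}(\mathbb{Z}[BG]), \mathcal{O}^{\mathrm{pris}}) \Rightarrow H^{i+j}_{\Prism}(BG)$, identify $H_{\le 2}$ of the bar complex as $\mathbb{Z}$, $\underline{G}$, $\underline{G}\wedge\underline{G}$, kill the unwanted contributions using $p$-divisibility of $G$ together with discreteness of $\Prism_S$ (and of $\mathcal{N}^{\ge 1}\Prism_S$) for QRSP $S$, and then repeat with $\mathcal{N}^{\ge 1}\mathcal{O}^{\mathrm{pris}}$. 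One harmless overstatement: you cite $R\mathcal{H}om(G,\mathcal{O}^{\Prism}) \simeq M_{\Prism}(G)[-1]$ as input from \cite{AL19}, but the vanishing of $\mathcal{E}xt^{\ge 2}$ is neither established there nor needed here---only $\mathcal{H}om(G,\mathcal{O}^{\mathrm{pris}})=0$ (from $p$-divisibility and $p$-completeness) and the definitional $\mathrm{Ext}^1 = M_{\Prism}(G)$ actually enter the degree-$2$ computation.
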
{}

On the other hand, there has been some recent work involving computation of $p$-adic cohomology theories associated to classifying stack of group schemes over $p$-adic base rings. For example, we refer to the work of Totaro \cite{Tot18} and the work of Antieau, Bhatt and Mathew \cite{ABM19}. Our results can also be viewed as some computations in this direction.\\

\noindent
\textbf{Outline.}
\noindent
The statement of \Cref{2} presents the need for having a theory of crystalline cohomology for stacks. This theory has already been developed by Olsson in \cite{Ol07} using the lisse-\'etale crystalline site of an algebraic stack. In \Cref{sec2}, we provide another definition of crystalline cohomology of algebraic stacks through syntomic descent relying on the work of Fontaine and Messing \cite{FM87}. In \cref{comp} we prove that the definition via syntomic descent and the definition via the lisse-\'etale crystalline site are equivalent. \\

In \cref{3.1}, we provide a brief review of crystalline Dieudonn\'e theory and prove \cref{2}. Our proof uses a description of $M(G)$ obtained in the work of Berthelot, Messing and Breen \cite{BBM} which expresses $M(G)$ as a certain Ext group in the crystalline topos which is recalled in \cref{gen}. The main ingredient of our proof of \cref{2} is to obtain a suitable spectral sequence computing $H^*_{crys}(BG)$ which is done in \cref{lol3}. In order to obtain the spectral sequence in \cref{lol3}, we use \v{C}ech descent along the syntomic map $* \to BG.$ In \cref{keno}, we deduce the analogue of \cref{2} for $p$-divisible groups. In \cref{ab}, we provide a complete calculation of crystalline cohomology of the classifying stack $B A$ of an abelian variety $A.$ This is done in \cref{cw}, where we prove that $H^{2*}_{crys}(BA) \cong \w{Sym}^*(H^1_{crys}(A))$ and $H^i_{crys}(BA) = 0$ for odd $i.$ An analogue of this result in $\ell$-adic cohomology was proved by Behrend {\cite[Thm. \ 6.1.6.]{Beh03}}. Our proof of \cref{cw} uses different techniques. We rely on some explicit simplicial constructions and some computations from the theory of non-abelian derived functors. Using these calculations, in \cref{man}, we completely calculate the crystalline cohomology of $BG$ for a $p$-disivible group $G.$ \\

In \cref{sec4}, we enter the mixed characteristic situation. We begin by recalling the work of  Ansch\"utz and Le Bras and define the contravariant prismatic Dieudonn\'e module $M_{\Prism}(G)$  for a $p$-divisible group $G$ over a quasi-regular semiperfectoid base ring $R$  as an $\w{Ext}^1$ group in the prismatic topos. This definition is compatible with the definition in \cite{SW14} as proven in {\cite[Prop.\ 4.3.7.]{AL19}}. We will then define quasi-syntomic stacks and define the classifying stack $BG$ of $G$ as a quasi-syntomic stack in \cref{pdiv}. In \cref{keno1}, we extend the notion of prismatic cohomology developed by Bhatt and Scholze \cite{BS19} to quasi-syntomic stacks via quasi-syntomic descent. Then analogous to \cref{lol6}, in \cref{pri7} we prove that $M_{\Prism}(G) \simeq H^2_{\Prism}(BG),$ where the latter denotes the prismatic cohomology of $BG.$ Similar to the crystalline case, in \cref{pri10} we obtain a complete calculation of prismatic cohomology of the classifying stack $B \widehat{A}$, where $\widehat{A}$ is the $p$-adic completion of an abelian scheme $A.$\\

\noindent
\textbf{Acknowledgements.} I am grateful to Bhargav Bhatt for suggesting that Dieudonn\'e modules could be recoverable via crystalline cohomology of classifying stacks as well as numerous helpful conversations regarding various aspects of this paper. I would also like to thank Igor Kriz, Arthur-C\'esar Le bras and Martin Olsson for helpful conversations.

\newpage

\section{Crystalline cohomology}\label{sec2}
\subsection{Crystalline cohomology for stacks}
We gather some notations for this section. We fix a prime $p$. Let $k$ be a perfect field of characteristic $p> 0$ and let $W(k)$ be the ring of Witt vectors of $k.$ Let $\w{SYNSch}_k$ denote the big Grothendieck site of schemes over $k$ with the syntomic topology. Let $D (\w{SYNSch}_k, W(k))$ denote the derived category of sheaves of $W(k)$-modules and let $D(W(k))$ denote the derived category of $W(k)$-modules. For a given scheme $X \in \w{SYNSch}_k$, there is a functor $R \Gamma(X, \cdot):D (\w{SYNSch}_k, W(k))\to D(W(k)).$ Let $R \Gamma_{crys}(X) \in D(W(k))$ denote the crystalline cohomology of $X$.

\begin{proposition}\label{fonmes} There is an object $R \mathcal{O}^{crys} \in D (\w{SYNSch}_k, W(k))$ such that $R \Gamma(X, R \mathcal{O}^{crys}) \simeq R\Gamma_{crys}(X)$.
\end{proposition}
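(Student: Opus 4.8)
The claim is that crystalline cohomology of $k$-schemes, which is a priori only a functor $X \mapsto R\Gamma_{\mathrm{crys}}(X)$ with values in $D(W(k))$, is in fact representable by a single object $R\mathcal{O}^{crys}$ in the derived category of syntomic sheaves on $\mathrm{SYNSch}_k$. So I need: (a) to produce a presheaf of complexes whose sections over affine $X$ compute $R\Gamma_{\mathrm{crys}}(X)$, and (b) to show this presheaf is already a syntomic sheaf (i.e. satisfies syntomic descent), so that its global sections over any $X$ recover crystalline cohomology.

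Let me think about how to do (a). The standard route, which is exactly the Fontaine–Messing approach cited in the text, is to work with the de Rham–Witt complex or, more precisely, to use the crystalline cohomology of the "canonical" thickenings coming from Witt vectors. The cleanest construction: restrict first to the affine syntomic site. For $X = \mathrm{Spec}(R)$ with $R$ a syntomic $k$-algebra, one can choose (locally, and functorially after sheafification) a presentation or use the functorial construction of crystalline cohomology via simplicial resolutions — write $R$ as the base change of a smooth $W(k)$-algebra modulo $p$, or more canonically form a free simplicial resolution $P_\bullet \to R$ by polynomial $k$-algebras, lift each $P_n$ to a polynomial $W(k)$-algebra $\widetilde{P}_n$, take the $p$-adically completed de Rham complex, and take the totalization. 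This gives a functor on the category of $k$-algebras, hence a presheaf $\mathcal{F}$ on $\mathrm{SYNSch}_k^{\mathrm{aff}}$, and by construction $\mathcal{F}(R) \simeq R\Gamma_{\mathrm{crys}}(\mathrm{Spec}\, R)$ by the comparison of crystalline cohomology with (derived) de Rham cohomology of a lift — this is where I invoke Fontaine–Messing / Bhatt's derived crystalline comparison, or the original Berthelot–Ogus smooth-lift comparison on the smooth part of the resolution.

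**The sheaf property and the main obstacle.**

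The real content — and the step I expect to be the main obstacle — is showing that this presheaf $\mathcal{F}$ satisfies syntomic descent, i.e. that $R\Gamma_{\mathrm{crys}}(-)$ is a sheaf for the syntomic topology (not merely the Zariski or étale topology, where descent is classical). For a syntomic cover $\mathrm{Spec}\, S \to \mathrm{Spec}\, R$ I need $R\Gamma_{\mathrm{crys}}(\mathrm{Spec}\, R) \xrightarrow{\ \sim\ } \lim_{\Delta} R\Gamma_{\mathrm{crys}}(\mathrm{Spec}\, S^{\otimes_R \bullet + 1})$. The argument I would give: this is a theorem of Fontaine–Messing — crystalline cohomology (equivalently, via the comparison, $p$-completed de Rham cohomology of a smooth lift) satisfies descent along syntomic — indeed along fppf — maps; one reduces by writing $R\Gamma_{\mathrm{crys}}$ Zariski-locally as $p$-completed derived de Rham cohomology $\widehat{L\Omega}_{R/W(k)}$ (Illusie), and derived de Rham cohomology satisfies flat descent because the cotangent complex and its divided-power/Hodge-completed analogs do (conjugate/Hodge filtration, each graded piece being wedge powers of $L_{R/k}$, which satisfies fppf descent after $p$-completion by the Bhatt–Morrow–Scholze / quasisyntomic descent machinery — or, in the Fontaine–Messing era, by an explicit argument). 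I'd state this as the key lemma, attribute it to \cite{FM87}, and give the reduction to descent for the cotangent complex.

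**Assembling $R\mathcal{O}^{crys}$ and finishing.**

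Once $\mathcal{F}$ is known to be a syntomic sheaf on the affine site, I take $R\mathcal{O}^{crys}$ to be its image under the equivalence (or restriction–Kan-extension) between sheaves on the affine syntomic site and sheaves on the big syntomic site $\mathrm{SYNSch}_k$ — concretely, sheafify the presheaf $U \mapsto R\Gamma_{\mathrm{crys}}(U)$ on all of $\mathrm{SYNSch}_k$; because it already satisfies descent on a basis of affines and every scheme is covered by affines, $R\Gamma(X, R\mathcal{O}^{crys})$ is computed by a Zariski/\v{C}ech hypercover of $X$ by affines, which by ordinary Zariski descent for crystalline cohomology gives back $R\Gamma_{\mathrm{crys}}(X)$. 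This verifies $R\Gamma(X, R\mathcal{O}^{crys}) \simeq R\Gamma_{\mathrm{crys}}(X)$ for all $X \in \mathrm{SYNSch}_k$ and completes the proof. One point of care throughout: "crystalline cohomology" here should be taken $p$-completed (or everything is over $W(k)$ with the $p$-adic topology), and the de Rham comparison used in step (a) is the completed one; I would flag this normalization at the start so the comparison isomorphisms are literally correct.
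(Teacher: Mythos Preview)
Your approach is correct but takes a genuinely different route from the paper's. The paper's argument is a two-line reduction: by Fontaine--Messing \cite[Prop.~1.3]{FM87}, for each $n$ the pushforward of the level-$n$ crystalline structure sheaf along the morphism of topoi to the syntomic site is acyclic, so there is an honest sheaf $\mathcal{O}^{crys}/W_n$ of $W_n(k)$-modules on $\mathrm{SYNSch}_k$ with $R\Gamma(X,\mathcal{O}^{crys}/W_n)\simeq R\Gamma_{\mathrm{crys}}(X)/W_n$; one then simply sets $R\mathcal{O}^{crys}:=R\varprojlim_n \mathcal{O}^{crys}/W_n$ and commutes the derived limit past $R\Gamma$. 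In particular the paper never constructs a presheaf of complexes or invokes derived de Rham cohomology---all the descent content is packaged inside the cited Fontaine--Messing result, and the finite-level objects are genuine sheaves concentrated in degree $0$.

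Your route---build a functorial model via $p$-completed derived de Rham cohomology of a simplicial polynomial resolution, then prove syntomic descent by filtering via the conjugate/Hodge filtration and reducing to flat descent for (wedge powers of) the cotangent complex---is the modern quasisyntomic-descent argument and works perfectly well. What it buys is transparency about \emph{why} descent holds (it exposes the mechanism), and it generalizes immediately to mixed-characteristic or prismatic settings as in \S\ref{sec4}. What the paper's approach buys is brevity and the pleasant fact that at each finite level one has an ordinary abelian sheaf rather than a complex, so no $\infty$-categorical or hypersheaf bookkeeping is needed; the only derived step is the final $R\varprojlim$.
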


\begin{proof} This is a formal corollary of a result of Fontaine and Messing. Using \cite[Prop. 1.3.]{FM87}, we get that for any $n$, there is a sheaf $\mathcal{O}^{crys}/W_n$ on $\text{SYNSch}_k$ such that the $n$-truncated crystalline cohomology $(R \Gamma_{crys}/W_n)(X) \simeq R \Gamma(X, \mathcal{O}^{crys}/W_n)$. Now we define $R\mathcal{O}^{crys}:= R \varprojlim _{n} \mathcal{O}^{crys}/W_n.$ Then it follows that  $R \Gamma(X, R \mathcal{O}^{crys}) \simeq \varprojlim_{n} (R \Gamma_{crys}/W_n)(X) \simeq R \Gamma_{crys} (X). $\end{proof}

\begin{definition}\label{def2}Let $\mathscr X$ be an algebraic stack over $k$. Let $C_{\mathscr X}$ denote the category of schemes $X$ over $\mathscr X$. We define the \textit{crystalline cohomology of the algebraic stack} $\mathscr X$ as $$R \Gamma_{crys} (\mathscr X):= R\varprojlim _{X \in C_{\mathscr X}} R \Gamma_{crys} (X) \in D(W(k)).$$

\end{definition}{}
\begin{remark}Alternatively, given an algebraic stack $\mathscr{X}$ one can consider the slice site $\mathscr{X}_{syn}:=\w{SYNSch}_{k_{/\mathscr{X}}}.$ Then the object $R \mathcal{O}^{crys}$ induces an object in the derived category of sheaves of $W(k)$-modules on $\mathscr{X}_{syn}$ which by abuse of notation we again write as $R \mathcal{O}^{crys}.$ Then $R \Gamma_{crys}(\mathscr{X}) \simeq R \Gamma (\mathscr{X}_{syn}, R\mathcal{O}^{crys}).$

\end{remark}{}

 \begin{remark}\label{aff}By \Cref{def2} and Zariski descent, we observe that $R \Gamma_{crys}(\mathscr{X}) \simeq  R\varprojlim _{X \in C_{\mathscr X}^{\w{aff}}} R \Gamma_{crys} (X) $ where $C_\mathscr{X}^{\w{aff}}$ denotes the category of affine schemes over $\mathscr{X}.$
 \end{remark}
 


\begin{remark}\label{desc}
Let $X \to \mathscr{X}$ be a syntomic cover of the algebraic stack $\mathscr X$ by a scheme $X.$ Let $X_\bullet$ be the simplicial algebraic space given by the \v{C}ech nerve of $X \to \mathscr{X}$ (so that $X_n$ is the $n$-fold fibre product of $X$ over $\mathscr X$, which is, in general, only an algebraic space.) Then we have that $R \Gamma _{crys}(\mathscr{X}) \simeq R\varprojlim R \Gamma_{crys}(X_\bullet)$.  
\end{remark}{}

\subsection{The crystalline site} In this section, we provide a brief reminder of definitions of the (big) crystalline site and the associated topos. We prefer to work with the big site because it is functorial and still computes the same cohomology groups. Our exposition roughly follows \cite{BBM}.\\

We fix a prime $p$ as before. Let $k$ be a fixed field of characteristic $p$ and let $W(k)$ be the ring of Witt vectors. Let $(W(k), p, \gamma)$ be the usual divided power structure on $W(k).$

\begin{definition}\label{crys1}
We denote by $\w{Crys}(k/ W(k))$ a category whose objects are given by the data of an $k$-scheme $U$; a $W(k)$-scheme $T$ on which $p$ is locally nilpotent; a closed $W(k)$-immerison $i : U \to T$ and a divided power structure $\delta$ on the ideal sheaf corresponding to the closed immersion $i,$ which we require to be compatible with the divided power structure $\gamma.$ We will denote an object of $\w{Crys}(k/ W(k))$ by $(U,T, \delta)$ or simply by $(U,T)$ when no confusion is likely to arise. The morphisms of this category is supposed to be a pair of morphisms $u: U' \to U$ and $v: T' \to T$, where $u$ is an $k$-morphism and $v$ is a $W(k)$-morphism compatible with divided powers such that $v \circ i' = i \circ u.$
\end{definition}

\begin{definition}\label{crys2}A family $(U_i, T_i) \to (U,T)$ of maps in $\w{Crys}(k/ W(k))$ is a $\tau$-covering if $U_i= U \times_T T_i$ for all $i$ and $\left \{T_i \to T \right \}$ is a $\tau$-covering. Here $\tau$ could be Zariski, \'etale, smooth, syntomic or fppf. This equips the category $\w{Crys}(k/ W(k))$ with a Grothendieck topology and we denote the site we obtain this way by $\w{Crys}(k/ W(k))_{\tau}.$ we let $(k/ W(k))_{\w{Crys}, \tau}$ denote the associated topos. We define a presheaf $\mathcal{O}^{crys}(U,T) := \Gamma(T, \mathcal O_T).$ This is a sheaf of rings on the site $\w{Crys}(k/ W(k))_{\tau}.$   \\
\end{definition}

\begin{definition}\label{crys3}
Let $X$ be a scheme over $k$. By $\underline{X}$ we denote a sheaf on $\w{Crys}(k/ W(k))_{\tau}$ defined by $\underline{X}(U,T) := X(U) = \w{Hom}_{k}(U,X).$ \textit{cf.} {\cite[1.1.4.5.]{BBM}} This is a sheaf for any $\tau.$
\end{definition}

\begin{remark}We will write $\tau \le \tau'$ while comparing different topologies to mean that $\tau'$ is finer than $\tau.$

\end{remark}{}

\begin{proposition}\label{crys4} Let $G$ be an abelian sheaf on $\w{Crys}(k/ W(k))_{\tau'}.$ Then $$\w{RHom}_{\w{Crys}(k/ W(k))_{\tau}}(G, \mathcal O^{crys}) \simeq \w{RHom}_{\w{Crys}(k/ W(k))_{\tau'}}(G, \mathcal O^{crys})  $$ for $\tau \le \tau'.$ 

\end{proposition}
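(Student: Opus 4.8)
The plan is to exhibit a morphism of sites (or rather, a continuous morphism of topoi) $\pi \colon (k/W(k))_{\mathrm{Crys},\tau'} \to (k/W(k))_{\mathrm{Crys},\tau}$ induced by the identity on underlying categories, and to show that $\mathcal{O}^{crys}$ together with its injective resolutions behave well under this comparison. Concretely, the forgetful-type functor here is just the identity functor on $\w{Crys}(k/W(k))$, which is cocontinuous from the $\tau'$-site to the $\tau$-site (since every $\tau$-cover is a $\tau'$-cover when $\tau \le \tau'$), and continuous as well because fibre products are computed the same way in both and covers pull back to covers. Thus there is an adjunction $(\pi^{-1}, \pi_*)$ between the two categories of abelian sheaves, with $\pi^{-1}$ the $\tau'$-sheafification of a $\tau$-sheaf. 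The first key point I would establish is that $\mathcal{O}^{crys}$, viewed as the sheaf $(U,T) \mapsto \Gamma(T,\mathcal{O}_T)$ on either site, satisfies $\pi^{-1}(\mathcal{O}^{crys}_\tau) = \mathcal{O}^{crys}_{\tau'}$ on the nose — indeed it is already a $\tau'$-sheaf, being representable-ish data that descends along flat (hence fppf, hence any $\tau$-) covers by faithfully flat descent for quasi-coherent sheaves — and dually that $\pi_* \mathcal{O}^{crys}_{\tau'} = \mathcal{O}^{crys}_\tau$, again because the sections do not change.

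The second, and technically central, step is the statement that $R\pi_* \mathcal{O}^{crys}_{\tau'} \simeq \mathcal{O}^{crys}_{\tau}$, i.e.\ the higher direct images vanish. This is precisely a faithfully flat descent / "Čech-to-derived" computation: the higher cohomology $H^i_{\tau'}$ of $\mathcal{O}^{crys}$ on a local object $(U,T)$ of the $\tau$-site, computed with respect to $\tau'$-covers, vanishes for $i>0$. For the syntomic or fppf topology this is the content of the fact that $H^i_{\mathrm{fppf}}(T,\mathcal{O}_T) = H^i_{\mathrm{Zar}}(T,\mathcal{O}_T)$ for affine $T$ (more generally agrees with quasi-coherent cohomology) — a theorem going back to Grothendieck, and one can reduce to this via the Čech nerve of a single $\tau'$-cover together with standard descent spectral sequences. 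Once $R\pi_*\mathcal{O}^{crys}_{\tau'} \simeq \mathcal{O}^{crys}_\tau$ is known, the proposition follows from the Grothendieck spectral sequence for the composite $\w{RHom}_{\tau}(G, -) = \w{RHom}_{\tau}(\pi^{-1}G', -)$: for any abelian sheaf $G$ on the $\tau'$-site we have by adjunction
\[
\w{RHom}_{\tau'}(G, \mathcal{O}^{crys}) \simeq \w{RHom}_{\tau'}(G, R\pi_*\mathcal{O}^{crys}) \qquad \text{wait, need } G \text{ on } \tau\text{-site};
\]
more carefully, for $G$ a sheaf on the $\tau'$-site one uses $\w{RHom}_{\tau'}(G,\mathcal{O}^{crys}) \simeq \w{RHom}_{\tau}(R\pi_* \text{(-)})$ is not quite the right shape, so instead I would argue: since $\pi^{-1}$ is exact and $\mathcal{O}^{crys}_\tau$ is sent to $\mathcal{O}^{crys}_{\tau'}$, and $R\pi_*$ carries injective resolutions of $\mathcal{O}^{crys}_{\tau'}$ to resolutions of $\mathcal{O}^{crys}_\tau$ by $\pi_*$-acyclics computing $\w{RHom}_\tau$, the adjunction $\w{Hom}_{\tau'}(\pi^{-1}(-), -) = \w{Hom}_\tau(-, \pi_*(-))$ derives to give, for $G$ on the $\tau$-site, $\w{RHom}_\tau(G,\mathcal{O}^{crys}) \simeq \w{RHom}_{\tau'}(\pi^{-1}G, R\pi_*\mathcal{O}^{crys}) \simeq \w{RHom}_{\tau'}(\pi^{-1}G,\mathcal{O}^{crys})$; and for $G$ already a $\tau'$-sheaf one has $\pi^{-1}\pi_* G \to G$, but one can avoid this by noting the statement is symmetric in the two sides and taking $G$ as input to $\w{RHom}_{\tau'}$ directly.

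I expect the main obstacle to be the vanishing $R^i\pi_*\mathcal{O}^{crys}_{\tau'} = 0$ for $i > 0$, i.e.\ checking that the sheaf $\mathcal{O}^{crys}$ has no higher cohomology along $\tau'$-refinements of $\tau$-covers. The cleanest route is to reduce everything to a single object $(U,T)$ with $T$ affine, observe that a $\tau'$-cover of $(U,T)$ in the crystalline site is (by \Cref{crys2}) induced from a $\tau'$-cover $\{T_i \to T\}$ with the PD-structure pulled back, and then invoke faithfully flat descent: $\mathcal{O}^{crys}$ restricted to the small site of such PD-thickenings of $T$ is the structure sheaf, whose fppf (a fortiori $\tau'$) cohomology agrees with Zariski cohomology and hence vanishes in positive degrees for affine $T$. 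Assembling these local vanishings into the global statement $R\pi_*\mathcal{O}^{crys}_{\tau'} \simeq \mathcal{O}^{crys}_\tau$, and then feeding it into the change-of-topology spectral sequence for $\w{RHom}$, completes the argument. One should also remark that the isomorphism is functorial in $G$, which is automatic from the naturality of the adjunction and of sheafification.
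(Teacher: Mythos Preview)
Your strategy is exactly the right one, and it is precisely what the paper invokes: the paper's proof consists of a citation to \cite[Prop.\ 1.3.6]{BBM}, whose main input is \cite[Prop.\ 1.1.19]{BBM}, and the latter is exactly your vanishing statement $R^i\pi_*\,\mathcal{O}^{crys}_{\tau'}=0$ for $i>0$, proved by reducing to faithfully-flat descent for the structure sheaf on affines. So at the level of ideas you have reconstructed the cited argument.

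That said, your derivation of the final isomorphism from the adjunction is muddled. The line
\[
\w{RHom}_\tau(G,\mathcal{O}^{crys}) \simeq \w{RHom}_{\tau'}(\pi^{-1}G, R\pi_*\mathcal{O}^{crys})
\]
is not well-formed: $R\pi_*\mathcal{O}^{crys}$ lives on the $\tau$-side, not the $\tau'$-side. The correct shape of the derived adjunction is
\[
\w{RHom}_{\tau'}(\pi^{-1}H,\,F) \;\simeq\; \w{RHom}_{\tau}(H,\,R\pi_*F)
\]
for $H$ on the $\tau$-site and $F$ on the $\tau'$-site (this uses that $\pi^{-1}$ is exact, so the adjunction derives cleanly). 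Now take $H=G$: since $G$ is already a $\tau'$-sheaf, it is in particular a $\tau$-sheaf, and $\pi^{-1}G=G$ because $\tau'$-sheafification does nothing to a $\tau'$-sheaf. Setting $F=\mathcal{O}^{crys}_{\tau'}$ and using your vanishing $R\pi_*\mathcal{O}^{crys}_{\tau'}\simeq \mathcal{O}^{crys}_{\tau}$ gives
\[
\w{RHom}_{\tau'}(G,\mathcal{O}^{crys}) \;\simeq\; \w{RHom}_{\tau}(G,\mathcal{O}^{crys}),
\]
which is the proposition. There is no need for the ``symmetric in the two sides'' remark or the detour through $\pi_*$-acyclics; once you have the vanishing and the exactness of $\pi^{-1}$, the one-line adjunction argument above suffices.
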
{}

\begin{proof} This is {\cite[Prop.\ 1.3.6. ]{BBM}}. The proof relies on {\cite[Prop. 1.1.19.]{BBM}}\end{proof}

\begin{proposition}\label{why} Let $X$ be a scheme over $k$. Then $$R\Gamma_{crys}(X) \simeq \w{RHom}_{\w{Crys}(k/ W(k))_{\tau}}(\mathbb Z [\underline{X}], \mathcal O^{crys}),$$ where one can choose any topology $\tau.$ Here $\mathbb Z[\underline{X}] $ denotes the ``free abelian group" on $\underline{X}$ in the topos $(k/ W(k))_{\w{Crys}, \tau}.$

\end{proposition}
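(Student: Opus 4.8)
The plan is to reduce the statement to a comparison between the crystalline cohomology $R\Gamma_{crys}(X)$ and an $\mathrm{RHom}$ in a crystalline topos, passing through the intermediate object $R\mathcal{O}^{crys}$ of \Cref{fonmes} together with the adjunction that defines the free abelian group $\mathbb{Z}[\underline{X}]$. First I would recall that for a sheaf of abelian groups $\mathcal{F}$ on any site, $R\Gamma(\mathcal{F}) \simeq \mathrm{RHom}(\mathbb{Z}, \mathcal{F})$, and more generally, for a representable sheaf $\underline{X}$ the ``free abelian group'' functor $\mathbb{Z}[-]$ is left adjoint to the forgetful functor from abelian sheaves to sheaves of sets, so that $\mathrm{RHom}_{Ab}(\mathbb{Z}[\underline{X}], \mathcal{F}) \simeq R\Gamma(X, \mathcal{F})$ whenever $\mathcal{F}$ is the restriction of an abelian sheaf and $X$ is the scheme representing $\underline{X}$ on the relevant slice site. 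The content is therefore to identify $R\Gamma(X, R\mathcal{O}^{crys})$, which by \Cref{fonmes} is $R\Gamma_{crys}(X)$, with $\mathrm{RHom}_{\mathrm{Crys}(k/W(k))_\tau}(\mathbb{Z}[\underline{X}], \mathcal{O}^{crys})$.

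Concretely, I would proceed as follows. Fix a topology $\tau$. By \Cref{crys4}, the $\mathrm{RHom}$ out of any abelian sheaf into $\mathcal{O}^{crys}$ is insensitive to which topology $\tau$ we use (among the listed ones), so it suffices to prove the identity for a single convenient $\tau$, e.g. the Zariski or fppf topology. Next, recall the comparison, already implicit in \cite{FM87} and \cite{BBM}, between the syntomic-site sheaf $R\mathcal{O}^{crys}$ on $\mathrm{SYNSch}_k$ and the crystalline structure sheaf $\mathcal{O}^{crys}$ on $\mathrm{Crys}(k/W(k))_\tau$: the crystalline cohomology $R\Gamma_{crys}(X)$ computed via the crystalline site equals $\mathrm{RHom}_{\mathrm{Crys}}(\underline{X}, \mathcal{O}^{crys})$ after taking $R\varprojlim_n$ over the $W_n$-truncations (using that $\mathcal{O}^{crys}/W_n$ on the crystalline site computes $W_n$-crystalline cohomology, which is the standard crystalline cohomology of $X$; cf.\ \cite[1.1.19, 1.3.6]{BBM}). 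Finally I would apply the adjunction between $\mathbb{Z}[-]$ and the forgetful functor inside the topos $(k/W(k))_{\mathrm{Crys},\tau}$ to rewrite $\mathrm{RHom}_{\mathrm{Crys}}(\mathbb{Z}[\underline{X}], \mathcal{O}^{crys}) \simeq R\Gamma(\underline{X}, \mathcal{O}^{crys})$, and then identify the right-hand side with $R\Gamma_{crys}(X)$ by the definition of crystalline cohomology via the crystalline site (or, equivalently, by matching it against $R\Gamma(X, R\mathcal{O}^{crys})$ from \Cref{fonmes} and invoking the equivalence of the two definitions of crystalline cohomology).

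The main obstacle I expect is keeping the bookkeeping between the two incarnations of ``crystalline cohomology'' straight: the object $R\mathcal{O}^{crys}$ lives on the big syntomic site $\mathrm{SYNSch}_k$, while $\mathcal{O}^{crys}$ and $\mathbb{Z}[\underline{X}]$ live in the big crystalline topos, and one must be careful that the $R\varprojlim_n$ over truncations, the passage from $\underline{X}$ to $\mathbb{Z}[\underline{X}]$, and the change-of-topology in \Cref{crys4} all commute appropriately. In particular, one should check that $R\varprojlim_n \mathrm{RHom}_{\mathrm{Crys}}(\mathbb{Z}[\underline{X}], \mathcal{O}^{crys}/W_n) \simeq \mathrm{RHom}_{\mathrm{Crys}}(\mathbb{Z}[\underline{X}], \mathcal{O}^{crys})$, which is where the definition $R\mathcal{O}^{crys} := R\varprojlim_n \mathcal{O}^{crys}/W_n$ from the proof of \Cref{fonmes} is doing its work; this requires that $\mathbb{Z}[\underline{X}]$ (or at least $\underline{X}$) behaves well under derived limits, which follows from the fact that $\mathrm{RHom}$ out of a fixed object commutes with $R\varprojlim$ in the target. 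Once these compatibilities are in place, the statement follows formally, and the freedom in the choice of $\tau$ is exactly the content of \Cref{crys4}.
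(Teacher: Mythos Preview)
Your approach is correct and matches the paper's: reduce the choice of $\tau$ to a single topology via \Cref{crys4}, and for that topology identify $\mathrm{RHom}(\mathbb{Z}[\underline{X}], \mathcal{O}^{crys})$ with $R\Gamma_{crys}(X)$ using the free/forgetful adjunction together with the definition of crystalline cohomology (the paper simply cites \cite[Prop.\ 1.3.4]{BBM} for the Zariski case). The one difference is that your detour through \Cref{fonmes}, the syntomic-site object $R\mathcal{O}^{crys}$, and the $R\varprojlim_n$ over $W_n$-truncations is entirely unnecessary here: this proposition lives purely on the big crystalline site, and the paper's one-line proof never leaves it, so your worries about commuting limits and matching the two incarnations of crystalline cohomology can be dropped.
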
{}

\begin{proof}When $\tau$ is Zariski topology, this follows from {\cite[Prop. \ 1.3.4.]{BBM}} and by definition of crystalline cohomology.  The rest follows from \Cref{crys4}. \end{proof}{}

\begin{remark}
Therefore, we see that it does not matter which topology $\tau$ we choose while computing crystalline cohomology. We will often choose $\tau$= Zariski topology and in this case we will omit $\tau$ from the notation and write the relevant site as $\w{Crys}(k/ W(k))$ and the topos as $(k/ W(k))_{\w{Crys}}.$
\end{remark}{}

\begin{remark}\label{trun}
One can also define the ``truncated crystalline sites'' by replacing $W(k)$ by $W_n(k)$ in \Cref{crys1} and \Cref{crys2} for each $n \ge 1.$ We denote the corresponding site (equiped with the Zariski topology for simplicity) by $\w{Crys}(k/W_n(k))$ and the associated topos by $(k/W_n(k))_{\w{Crys}}.$ This has a sheaf of rings $\mathcal{O}^{crys}$ defined as before. Analogues to \Cref{why} one has that for  a scheme $X$ over $k$, $R \Gamma _{crys}(X/W_n) \simeq R\text{Hom}_{\w{Crys}(k/W_n(k))}(\mathbb{Z}[\underline{X}], \mathcal O^{crys}),$ where $\underline{X}$ is the sheaf on $\w{Crys}(k/W_n(k))$ defined in a way similar to \Cref{crys3}.
\end{remark}{}

Now we provide a different definition of crystalline cohomology of an algebraic stack following {\cite[2.7.1.]{Ol07}}. Then we will prove that this definition is equivalent to \Cref{def2}.
\begin{definition}\label{def3}Let $\mathscr{X}$ be an algebraic stack over $k.$ We define the \textit{lisse-\'etale crystalline} site of $\mathscr X$ first as the category $\w{Crys}(\mathscr{X})_{ \textit{lis-\'et}}$ whose objects are the data of a $k$-schemes $U$ with a smooth map $U \to \mathscr{X}$ over $k$; a $W(k)$-scheme $T$ on which $p$ is locally nilpotent; a closed $W(k)$-immersion $i: U \to T$ and a divided power structure $\delta$ on the ideal sheaf corresponding to the closed immersion $i$, which we require to be compatible with $(W(k), p, \gamma).$ We will denote an object of $\w{Crys}(\mathscr{X})_{ \textit{lis-\'et}}$ by $(U,T, \delta)$ or simply by $(U,T)$ when no confusion is likely to arise. The morphisms of this category is defined in the obvious way. A family $(U_i, T_i) \to (U,T)$ of maps in $\w{Crys}(\mathscr{X})_{ \textit{lis-\'et}}$ is covering if $U_i = U \times_T T_i$ for all $i$ and $\left \{T_i \to T \right \}$ is an \'etale covering. This equips $\w{Crys}(\mathscr{X})_{ \textit{lis-\'et}}$ with a Grothendieck topology and the resulting site is called the \textit{lisse \'etale crystalline} site. Let $\mathcal{O}^{crys}(U,T):= \Gamma(T, \mathcal O _T)$ be a sheaf of rings on $\w{Crys}(\mathscr{X})_{ \textit{lis-\'et}}$.

\end{definition}

\begin{definition}\label{def4} We define $R
\Gamma_{\textit{lis-\'et-crys}} (\mathscr{X}):= R \Gamma_{}(\w{Crys}(\mathscr{X})_{ \textit{lis-\'et}}, \mathcal{O}^{crys}). $

\end{definition}{}

\begin{proposition}\label{comp}$R\Gamma_{\textit{lis-\'et-crys}} (\mathscr{X}) \simeq R \Gamma_{crys} (\mathscr X),$ i.e., \Cref{def4} is consistent with \Cref{def2}.

\end{proposition}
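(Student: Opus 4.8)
The plan is to compare the two definitions of crystalline cohomology of $\mathscr{X}$ by exhibiting both as limits over a common index category and identifying the relevant sheaves. The key observation is that both \Cref{def2} and \Cref{def4} are computed by a cohomology/limit over schemes mapping to $\mathscr{X}$, with the crystalline structure sheaf as coefficients; the only apparent discrepancy is that \Cref{def2} uses \emph{all} schemes over $\mathscr{X}$ (equivalently, by \Cref{aff}, affine schemes) while \Cref{def4} uses \emph{smooth} schemes over $\mathscr{X}$ together with the lisse-\'etale topology. So first I would set up the descent: choose a smooth surjection $X \to \mathscr{X}$ from a scheme $X$, form its \v Cech nerve $X_\bullet$, and use \Cref{desc} (smooth maps are syntomic, so this applies) to write $R\Gamma_{crys}(\mathscr X) \simeq R\varprojlim_{[n] \in \Delta} R\Gamma_{crys}(X_n)$. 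On the lisse-\'etale side, Olsson's cohomological descent for the lisse-\'etale topos (\cite{Ol07}) gives the analogous identification $R\Gamma_{\textit{lis-\'et-crys}}(\mathscr X) \simeq R\varprojlim_{[n]\in\Delta} R\Gamma_{\textit{lis-\'et-crys}}(X_n)$, where each $X_n$ is an algebraic space.

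Second, I would reduce to the case of an algebraic space (indeed a scheme) by this descent argument: it suffices to show $R\Gamma_{\textit{lis-\'et-crys}}(Y) \simeq R\Gamma_{crys}(Y)$ compatibly for algebraic spaces $Y$, because then the two cosimplicial objects $n \mapsto R\Gamma_{crys}(X_n)$ and $n\mapsto R\Gamma_{\textit{lis-\'et-crys}}(X_n)$ are levelwise equivalent and their totalizations agree. For $Y$ a scheme this is essentially the statement that the lisse-\'etale crystalline topos computes the same cohomology as the usual (big or small) crystalline site, which one gets by comparing the lisse-\'etale crystalline site to the \'etale crystalline site (every scheme is smooth over itself, and the lisse-\'etale covers refine to \'etale covers on the $T$-component) and then invoking \Cref{crys4} together with \Cref{why} to pass between the \'etale and Zariski crystalline topologies — all of which are already available in the excerpt. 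For $Y$ a general algebraic space, one runs the same \v Cech descent along an \'etale scheme cover $Y' \to Y$ on both sides.

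Third, I would make the comparison \emph{natural} enough to commute with the $R\varprojlim$ over $\Delta$: concretely, one produces a morphism of sites (or of the associated structure-sheaf-equipped topoi) from the lisse-\'etale crystalline site of $\mathscr X$ to the syntomic slice site $\mathscr X_{syn}$ carrying $R\mathcal O^{crys}$ to $R\mathcal O^{crys}$, inducing a natural map $R\Gamma_{crys}(\mathscr X) \to R\Gamma_{\textit{lis-\'et-crys}}(\mathscr X)$, and then checks it is an equivalence after the above reduction to (affine) schemes. The main obstacle I anticipate is not any single computation but the bookkeeping of lisse-\'etale functoriality: the lisse-\'etale topos is famously not functorial for arbitrary morphisms, so one must be careful that the descent spectral sequence/limit presentation of $R\Gamma_{\textit{lis-\'et-crys}}(\mathscr X)$ is valid and matches the one from \Cref{desc}. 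Once both sides are expressed as $R\varprojlim_{\Delta}$ of the same cosimplicial object in $D(W(k))$ — which is forced by the scheme-level comparison plus naturality in \'etale maps — the proposition follows.
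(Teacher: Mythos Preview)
Your strategy---\v{C}ech descent along a smooth cover, reduce to schemes, compare there---is genuinely different from the paper's proof, which never descends at all. The paper instead builds a chain of morphisms of topoi directly at the level of the stack $\mathscr{X}$: first from the lisse-\'etale crystalline site to the \emph{big} \'etale crystalline site of $\mathscr{X}$ (showing $i_!\ast=\ast$ via an explicit refinement argument that uses invariance of the \'etale site under nilpotent thickenings), then from the big \'etale to the big syntomic crystalline site (using $Rv_*\mathcal{O}^{crys}\simeq\mathcal{O}^{crys}$, which is local and reduces to {\cite[Prop.~1.1.19]{BBM}}), and finally from the big syntomic crystalline site to $\mathscr{X}_{syn}$ via a cocontinuous functor $(V,T)\mapsto V$. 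This route completely sidesteps the lisse-\'etale functoriality headaches you correctly anticipate.

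Your approach can be made to work, but two of your steps are thinner than you let on. First, the cohomological descent you invoke on the lisse-\'etale side is not a one-line citation: it is exactly the kind of statement whose proof requires the same site-comparison machinery the paper uses (or else a careful treatment of simplicial crystalline topoi as in \cite{Ol07}), so you are deferring rather than avoiding the work. Second, your scheme-level comparison ``lisse-\'etale crystalline $\simeq$ usual crystalline'' still needs the key technical input---that every object $(U,T)$ of the big crystalline site of $Y$ can be refined by objects with $U'\to Y$ smooth---and the phrase ``every scheme is smooth over itself'' does not supply this. That refinement is precisely the content of the paper's Step~2 argument (the construction of an \'etale cover $(U_i,T_i)\to(U,T)$ factoring through a chosen smooth atlas), so even in your reduction the same core lemma reappears. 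The upshot: your plan is sound in outline, but its execution would end up reproving the paper's Step~2 at the scheme level while additionally carrying the bookkeeping of simplicial descent; the paper's direct topos-comparison is both shorter and cleaner.
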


\begin{proof}We break down the proof in a few steps.\\

\noindent
\textit{Step 1.}
First we fix some notations for the proof. We let $\w{Crys}(\mathscr{X})_{\textit{\'et}}$ denote the big variant of the site in \cref{def3} where we remove the hypothesis that for a pair $(U,T)$ the map $U \to \mathscr{X}$ had to be smooth and the covers are still given by \'etale covers. Sending $(U,T) \to \Gamma(T, \mathcal{O}_T)$ defines a sheaf of rings on both of these sites which we will denote by $\mathcal{O}^{crys}$ in both cases when no confusion is likely to occur. Let $\w{Crys}(\mathscr{X})_{{syn}}$ denote the variant of $\w{Crys}(\mathscr{X})_{\textit{\'et}}$ with the same underlying category but now the covers are given by syntomic covers. Lastly we denote the big syntomic site of $\mathscr{X}$ by $\mathscr X_{syn}$ which is the full subcategory of schemes over $\mathscr X$ and the topology is given by syntomic coverings. \\

\noindent
\textit{Step 2.} In this step, we show that crystalline cohomology can be computed in the big variant $\w{Crys}(\mathscr{X})_{\acute{e}t}.$ We have an inclusion functor $$\w{Crys}(\mathscr{X})_{lis\w{-} \acute{e}t} \to \w{Crys}(\mathscr{X})_{\acute{e}t}$$ which is continuous and cocontinuous and therefore gives a map of topoi $$i:\w{Shv}\w{Crys}(\mathscr{X})_{lis\w{-} \acute{e}t} \to \w{Shv}\w{Crys}(\mathscr{X})_{\acute{e}t}. $$ We want to prove that there is a natural isomorphism $R \Gamma_{lis \w{-} \acute{e}t}(\mathscr{X},i^{-1} \mathcal{O}^{crys}) \simeq R \Gamma_{ \acute{e}t}(\mathscr{X}, \mathcal{O}^{crys}) $. Thus, it is enough to prove that $i^{-1}$ has a left adjoint $i_!$ which is exact. Following the method of proof in {\cite[Tag 07IJ]{SP}}, it is enough to show that $i_! * = *$ where $*$ denotes the final object.\\ 

Let $Y \to \mathcal{X}$ be a smooth cover of $\mathcal{X}$ by a scheme $Y$. First we prove that for any given object $(U,T) \in \w{Crys}(\mathscr{X})_{\acute{et}}$, there exists a cover $(U_i, T_i ) \to (U,T)$ in $\w{Crys}(\mathscr{X})_{ \textit{lis-\'et}}$ such that for all $i$ there exists a map $U_i \to Y$ over $\mathscr{X}.$ In order to show this, first we base change to get a smooth map $U\times_{\mathscr{X}} Y \to U$  which admits a projection map $U\times_{\mathscr{X}} Y \to Y.$ Now we pick an \'etale cover $U' \to U \times_{\mathscr{X}} Y$ where $U'$ is a scheme and thus we get a smooth map $U' \to U.$ This can be refined further to obtain maps $U_i \to U'$ such that the composition $\left \{ U_i \to U' \to U \right \}$ is an \'etale cover.  Now $i: U \to T$ is a divided power thickening of $U$ with $p$ locally nilpotent on $T.$ Therefore $i: U \to T$ is a universal homeomorphism. By invariance of \'etale sites {\cite[Tag 04DY]{SP}} for universal homeomorphisms, we get \'etale maps $T_i \to T$ such that $T_i \times_T U \simeq U_i.$ This gives a thickening $U_i \to T_i$. Since the map $T_i \to T $ is \'etale, by {\cite[Tag 07H1]{SP}}, the ideal of the thickening $U_i \to T_i$ has divided powers. Further, since $\left \{U_i \to U\right \}$ was an \'etale \textit{cover}, $\left \{ T_i \to T  \right \} $ is also an \'etale \textit{cover} by construction. Thus we have constructed a refinement $(U_i , T_i) \to (U,T)$ in the site $\w{Crys}(\mathscr{X})_{ \textit{lis-\'et}}.$ For all $i$, we have also constructed a map $U_i \to Y$ over $\mathscr X.$ 
\\

Now by using the proof in {\cite[Tag 07IJ]{SP}} for the pairs $(U_i, T_i)$ with a map $U_i \to Y$ constructed above, we obtain a set of PD thickenings $(U_s,T_s)_{s \in S} $ with $U_s \to Y$ being an open immersion such that $(U_i, T_i)$ admits a refinement $(U_{ij}, T_{ij})$ in $\w{Crys}(\mathscr{X})_{lis\w{-}\acute{e}t}$ for each $i,j$ and there is a PD map $(U_{ij}, T_{ij}) \to (U_s,T_s)$ in $\w{Crys}(\mathscr{X})_{\acute{e}t}.$ Thus we get an epimorphism $\coprod_{s \in S} h_{(U_s, T_s)} \to *$ in $\w{Shv}\w{Crys}(\mathscr{X})_{\acute{e}t}$ which implies $i_! * = *$ proving our main claim in Step 2.\\


\noindent
\textit{Step 3. }In this step, we show that crystalline cohomology can also be computed in the finer site $\w{Crys}(\mathscr{X})_{syn}$. We have a map of topoi $$ v: \w{Shv}\w{Crys}(\mathscr{X})_{syn} \to \w{Shv}\w{Crys}(\mathscr{X})_{\acute{e}t}.$$ Let $v_*$ denote the right adjoint. The left adjoint $v^{-1}$ is given by sheafification. Thus we obtain a natural isomorphism $R \Gamma_{syn}(\mathscr{X}, \mathcal{O}^{crys}) \simeq  R \Gamma_{\acute{e}t}(\mathscr{X}, R v_* \mathcal{O}^{crys}).$ We claim that $R v_* \mathcal{O}^{crys} \simeq v_* \mathcal{O}^{crys} = \mathcal{O}^{crys}.$ This statement can be checked locally and thus follows from the scheme case where it is already known. We refer to \cite[Prop. 1.1.19.]{BBM} for a proof in the case of schemes.\\

\noindent
\textit{Step 4.} In this step, we show that crystalline cohomology can be computed in $\mathscr{X}_{syn}$ as well and conclude the proof. We have a functor $$h:\w{Crys}(\mathscr X)_{\textit{syn}} \to \mathscr X_{syn}$$ which sends $(V,T) \to V.$ By the proof of \cite[Prop. 1.3.]{FM87}, this functor is cocontinuous. By {\cite[Tag 00XI]{SP}}, there is a morphism of topoi $$u: \w{Shv}\w{Crys}(\mathscr X)_{\textit{syn}} \to \w{Shv} \mathscr X_{syn},$$ where $u_* (\mathcal F) (U):= \varprojlim_{(V,T)/U }\mathcal{F}(V,T). $ We write the left adjoint by $u^{-1}$ which is exact by \cite[Tag 00XL]{SP}.  By adjunction, it follows that $u_*$ commutes with the global section functor: indeed, we obtain $\text{Hom}(*, u_* \mathcal F) \simeq \text{Hom}(*, \mathcal F).$ Therefore it follows that $R \Gamma_{\textit{syn}}(\mathscr X, \mathcal{O}^{crys}) \simeq R \Gamma Ru_* \mathcal{O}^{crys}.$ Now the right hand side is equivalent to \cref{def2}. Therefore we are done. \end{proof}{}



\newpage

\section{Application to crystalline Dieudonn\'e theory}\label{secc3}In this section, we apply the notion of crystalline cohomology of stacks to describe the Dieudonn\'e module of a finite group scheme of $p$-power order and $p$-divisible group as the crystalline cohomology of the classifying stack. Before we do that, we briefly remind ourselves the main theorem of contravariant Dieudonn\'e theory and the related definitions. 
\begin{definition}Let $k$ be a perfect field and $W(k)$ the ring of Witt vectors of $k.$ Let $\sigma$ denote the Witt vector Frobenius which is induced from the Frobenius in $k$. The Dieudonn\'e ring $\mathscr{D}_k$ is defined to be the free non-commutative polynomial ring in two generators $F,V$ over $W(k)$ subjected to the relations $FV= VF = p$, $Fc = \sigma(c)F$ for $c \in W(k)$, and $cV = V \sigma(c) $for $c \in W(k)$. $\mathscr{D}_k$ is non-commutative when $k \neq \mathbb{F}_p$, and is $\mathbb{Z}_p[x, y]/(xy - p)$ when $k = \mathbb{F}_p$.
\end{definition}

\begin{definition}We let $W_n$ denote the group scheme that corepresents the functor that sends a $k$-algebra $A$ to the ring of length $n$ Witt vectors $W_n(A).$ These group schemes are naturally induced with a Frobenius endomorphism $F$ on them and we define $W_n^{m}$ to be the group scheme which is the kernel of $F^m$ on $W_n.$ We also have a map $V: W_n \to W_{n+1}$ induced by the Witt vector Verschiebung which turns $\left \{W_n\right \}_{n\ge 1}$ into a directed system of group schemes. We define $CW^u:= \varinjlim W_n.$ One defines the formal $p$-group $CW$ of Witt covectors as a completion of $CW^u$ in a suitable sense. We refer to \cite{Fo77} for details.

\end{definition}{}

\begin{example}By definition $W_1$ is the additive group scheme $\mathbb{G}_a.$ Also, $W_1^{1}$ is the finite additive group scheme with the underlying scheme given by $\text{Spec}\, k[x]/x^p$ which is often denoted as $\alpha_p.$

\end{example}{}
\begin{definition}For a finite group scheme $G$ over $k$, we define the \textit{Dieudonn\'e module} of $G$ to be $$M(G):=  \w{Hom}(G,CW)$$ where the Hom is being taken in the category of formal groups. 

\end{definition}{}

Now we are ready the state the classical theorem of contravariant Dieudonn\'e theory. 
\begin{theorem}The functor $G \to M(G)$ induces an anti-equivalence between the category of finite group schemes over $k$ and left $\mathscr{D}_k$-modules with finite $W(k)$ length.

\end{theorem}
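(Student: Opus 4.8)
This is the classical main theorem of contravariant Dieudonné theory, so the plan is not to reprove it from scratch but to organize a proof along the standard lines, relying on reduction to the connected/étale decomposition and then to the three ``irreducible'' building blocks. First I would reduce to working over an algebraically closed field, since the formation of $M(G)$ commutes with base change along $k \to k'$ for $k'$ a perfect extension (one checks $\w{Hom}(G, CW)$ behaves well because $CW$ is defined functorially in the base), and anti-equivalences descend along faithfully flat base change of the Dieudonné ring; alternatively one keeps $k$ general and tracks the semilinear structure throughout. Next I would invoke the canonical connected-étale sequence $0 \to G^0 \to G \to G^{\w{\'et}} \to 0$, which splits over a perfect field, together with Cartier duality, to reduce the classification to the three cases: $G$ étale (equivalently $M(G)$ with $F$ bijective, $V$ topologically nilpotent), $G$ of multiplicative type (the Cartier dual of étale, so $V$ bijective, $F$ topologically nilpotent), and $G$ connected with connected dual (the ``$\alpha_p$-type'' or unipotent-infinitesimal case, where both $F$ and $V$ are nilpotent).

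The key computational steps are then: (i) fully faithfulness, i.e. $\w{Hom}(G, H) \xrightarrow{\sim} \w{Hom}_{\DD_k}(M(H), M(G))$, which I would prove by the standard five-lemma/dévissage argument, first for $G = H$ one of the simple objects $\ZZ/p$, $\mu_p$, $\alpha_p$ where it is a direct check, then propagating along short exact sequences and using that every finite group scheme is an iterated extension of simple ones; (ii) essential surjectivity, i.e. every finite-length left $\DD_k$-module $M$ arises as $M(G)$ — here one again decomposes $M$ according to the eigenbehavior of $F$ and $V$ and builds $G$ as an extension of the known pieces, checking the Ext-groups on both sides match; (iii) exactness of $M$, which follows once one knows $CW$ is an injective object (or at least sufficiently injective) in the relevant category of formal groups, so that $\w{Hom}(-, CW)$ is exact on short exact sequences of finite group schemes. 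The base cases require knowing the Dieudonné modules of $\ZZ/p\ZZ$, $\mu_p = \widehat{\mathbb{G}}_m[p]$, and $\alpha_p$ explicitly: $M(\ZZ/p) = k$ with $F$ an isomorphism and $V = 0$, $M(\mu_p) = k$ with $V$ an isomorphism and $F = 0$, $M(\alpha_p) = k$ with $F = V = 0$; these are computed directly from the definition $M(G) = \w{Hom}(G, CW)$ by unwinding the structure of Witt covectors, and in fact one reduces to $\w{Hom}(G, CW^u) = \varinjlim \w{Hom}(G, W_n)$ for these small $G$.

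The main obstacle, and the part that genuinely requires work rather than formal manipulation, is the essential surjectivity combined with the fully faithfulness in the infinitesimal case — equivalently, controlling $\w{Ext}^1$ in the category of connected finite group schemes and matching it with $\w{Ext}^1_{\DD_k}$ of finite-length modules. Concretely one must show that the functor does not lose extension data: the obstruction is that a priori $M$ could fail to be full on extensions even if it is full on Hom's between simple objects. I would handle this by exhibiting, for each finite-length $\DD_k$-module $M$, an explicit presentation by (co)free modules over $\DD_k$ corresponding to a presentation of a candidate $G$ by products of the $W_n^m$ and their duals, and then verifying by a length count (using $\w{length}_{W(k)} M(G) = \log_p |G|$, proved first on simple objects and then additively via exactness) that the constructed $G$ has the right size, forcing the natural comparison map to be an isomorphism. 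The length/order bookkeeping is thus the linchpin that upgrades the qualitative dévissage into an actual equivalence.
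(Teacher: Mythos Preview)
The paper does not prove this theorem. It is stated twice --- once in the introduction as Theorem~1.1 with the attribution ``(Dieudonn\'e)'', and once at the beginning of Section~3 as ``the classical theorem of contravariant Dieudonn\'e theory'' --- but in both places it is quoted as a known result with no argument supplied. The paper's contribution is \cref{2}, the identification $\sigma^* M(G) \simeq H^2_{crys}(BG)$, and for that it uses the Berthelot--Breen--Messing description of $M(G)$ as an $\mathrm{Ext}^1$ in the crystalline topos (\cref{gen}), not the classical construction via $\mathrm{Hom}(G,CW)$.

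Your outline is a reasonable sketch of the classical proof as one finds it in Fontaine~\cite{Fo77} or in Demazure's lectures: connected--\'etale decomposition, reduction to the three simple objects $\mathbb{Z}/p$, $\mu_p$, $\alpha_p$, d\'evissage for full faithfulness, and a length count for essential surjectivity. A couple of the steps you flag as routine are in fact where the real work lives --- the injectivity (or near-injectivity) of $CW$ in the category of formal $p$-groups is a genuine theorem, and the Ext-matching in the local--local case is delicate --- but you correctly identify these as the obstacles. So as a plan for proving the classical theorem your proposal is sound; it is simply not something the paper undertakes.
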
{}

In \cite{BBM}, Berthelot, Breen and Messing obtained an alternative description of $M(G).$ Given a finite group scheme $G$, using \Cref{crys3}, we obtain an abelian group object $\underline{G}$ of the topos $(k/W(k))_{\w{Crys}}.$ They prove the following theorem which expresses $M(G)$ as a certain Ext group in the crystalline topos.
\begin{theorem}[{\cite[Thm. \ 4.2.14.]{BBM}}]\label{gen}For a finite group scheme $G$ for $p$ power rank, we have $\sigma ^* M(G) \simeq \w{Ext}^1_{(k/W(k))_{\w{Crys}}}(\underline{G},\mathcal{O}^{crys} ),$ where $\sigma^* M(G)$ denotes restriction of scalars along the Witt vector Frobenius $\sigma.$

\end{theorem}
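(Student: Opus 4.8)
The plan is to identify the global crystalline $\mathrm{Ext}^1$ with the classical Dieudonn\'e module by factoring the comparison through the crystalline Dieudonn\'e \emph{crystal} and then matching its semi-linear-algebra data against $CW$. First I would introduce $\mathbb{D}(G) := \mathcal{E}xt^1_{\w{Crys}(k/W(k))}(\underline{G}, \mathcal{O}^{crys})$, the first local-Ext sheaf on $\w{Crys}(k/W(k))$. Because the base $\mathrm{Spec}\,k$ is $0$-dimensional, crystalline cohomology of the point with coefficients in any crystal is concentrated in degree $0$ and equals the value of that crystal on the final thickening $(k, W(k))$; hence the local-to-global spectral sequence $H^p_{crys}(k/W(k), \mathcal{E}xt^q(\underline{G}, \mathcal{O}^{crys})) \Rightarrow \mathrm{Ext}^{p+q}_{crys}(\underline{G}, \mathcal{O}^{crys})$ has $E_2^{p,q}=0$ for $p>0$, so it collapses with no surviving differentials and yields $\mathrm{Ext}^1_{crys}(\underline{G}, \mathcal{O}^{crys}) \simeq \mathbb{D}(G)(W(k))$, the evaluation of the Dieudonn\'e crystal on $W(k)$. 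The input I must justify here is that the Ext-sheaves $\mathcal{E}xt^q(\underline{G}, \mathcal{O}^{crys})$ are coherent crystals, which follows from the finiteness of $G$ together with the truncated description of $\mathcal{O}^{crys}$ recalled in \Cref{trun}.

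Next I would record the formal properties of $G \mapsto \mathbb{D}(G)(W(k))$. By \Cref{crys4} the Ext groups are independent of the chosen topology, so I may compute in the fppf crystalline site, where $\underline{G}$ is well behaved. The functor is contravariant and additive, and the Frobenius and Verschiebung of $G$ induce operators $F,V$ satisfying $FV=VF=p$: concretely $V$ is induced by the group-scheme Verschiebung and $F$ by the relative Frobenius together with the Frobenius-semilinearity of crystalline cohomology over $W(k)$. Thus the target is naturally a left $\mathscr{D}_k$-module, and finiteness of its $W(k)$-length follows by d\'evissage from the building blocks. This equips $G \mapsto \mathrm{Ext}^1_{crys}(\underline{G}, \mathcal{O}^{crys})$ with the structure of a contravariant functor into finite-length $\mathscr{D}_k$-modules, the same target as the classical functor $M$ of \Cref{1}.

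The heart of the argument is a natural isomorphism to $\sigma^* M(G) = \sigma^*\w{Hom}(G, CW)$. Here I would exhibit a canonical morphism relating $\mathcal{O}^{crys}$ to the Witt-covector group $\underline{CW}$ in the fppf topos: expressing the crystalline structure sheaf on each PD-thickening through (formal) Witt vectors, the ghost-component map together with the divided-power filtration produce a comparison of complexes from $R\w{Hom}_{crys}(\underline{G}, \mathcal{O}^{crys})$ to $R\w{Hom}(G, \underline{CW})$, which on $H^1$ gives the desired natural map. To check that it is an isomorphism I would then reduce, via the d\'evissage of $G$ into successive extensions of $\alpha_p$, $\mathbb{Z}/p\mathbb{Z}$ and $\mu_p$ (using the five-lemma along the long exact Ext sequences, both functors being exact), to verifying the claim on these three blocks, where both $\mathrm{Ext}^1_{crys}$ and $\w{Hom}(-, CW)$ are one-dimensional over $k$ with $(F,V)$ acting as $(0,0)$, as (an isomorphism, $0$), and as ($0$, an isomorphism), respectively.

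The main obstacle is precisely this comparison on the building blocks \emph{with the correct Frobenius twist}: one must check that the natural identification of $\mathcal{O}^{crys}$-valued extension classes with homomorphisms into $CW$ intertwines the crystalline $F$ with the Witt $F$ only after applying $\sigma^*$, i.e.\ that the crystalline Frobenius is $\sigma$-semilinear in the normalization forced by the divided-power structure $(W(k), p, \gamma)$. Tracking this twist---equivalently, pinning down on which side the relative Frobenius of $G$ is absorbed---is the delicate bookkeeping that accounts for the appearance of $\sigma^* M(G)$ rather than $M(G)$, and is exactly the content of \cite[Thm.\ 4.2.14]{BBM}; I would cite their computation for the detailed verification of the three base cases and for the compatibility of the comparison map with $F$ and $V$.
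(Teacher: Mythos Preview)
The paper does not prove this statement at all: it is stated as \cref{gen} with attribution to \cite[Thm.\ 4.2.14]{BBM} and is used as a black box in the proof of \cref{2}. So there is no ``paper's own proof'' to compare against; your proposal is effectively an outline of the argument in \cite{BBM} rather than of anything in the present paper.

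As a sketch of the BBM argument your outline is broadly on the right track---local Ext sheaf, collapse of the local-to-global spectral sequence over a point, d\'evissage to the simple objects $\alpha_p$, $\mu_p$, $\mathbb{Z}/p\mathbb{Z}$---but the middle step is genuinely underspecified. The sentence ``expressing the crystalline structure sheaf on each PD-thickening through (formal) Witt vectors, the ghost-component map together with the divided-power filtration produce a comparison of complexes'' is not a construction; the actual work in \cite{BBM} is to build an explicit map $\underline{CW} \to \mathcal{O}^{crys}[1]$ (equivalently an extension class) in the crystalline topos and to identify the resulting $\w{Hom}(\underline{G}, \underline{CW}) \to \w{Ext}^1(\underline{G}, \mathcal{O}^{crys})$, which is where the Frobenius twist $\sigma^*$ enters. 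Your final paragraph essentially concedes this by deferring the key bookkeeping back to \cite{BBM}, so the proposal is circular as a self-contained proof. If you want to give a genuine argument here rather than a citation, you need to actually write down the extension of $\underline{CW}$ by $\mathcal{O}^{crys}$ (or equivalently the map from Witt covectors into the shifted structure sheaf) and trace the semilinearity through it; otherwise, simply citing \cite[Thm.\ 4.2.14]{BBM} as the paper does is the honest option.
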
{}

Lastly, we mention some computations of $M(G)$ for a certain class of group schemes.
\begin{example}Let $\mathscr{D}_n^m := \mathscr{D}_k / (\mathscr{D}_k F^m + \mathscr{D}_k V^n)$. Then we have $M(W_n^m) \simeq \mathscr{D}_n^m . $

\end{example}{}

\subsection{Dieudonn\'e module of finite group schemes}\label{3.1}
In this subsection, we work with finite group schemes $G$ over a perfect field $k$ whose order is a power of $p.$ Let $BG := [ \w{Spec} \,k / G ]$ be the classifying stack of $G$ \cite[Tag 044O]{SP}. Any such group scheme is a local complete intersection and therefore it follows that the map $\w{Spec}\,k \to BG $ is a syntomic cover. Its \v{C}ech nerve is given by the following simplicial scheme: 

\[\xymatrix{
 \cdots  G \times G \times G  \ar[r]<4.5pt>\ar[r]<1.5pt>\ar[r]<-4.5pt>\ar[r]<-1.5pt> &   G \times G  \ar[r]<3pt>\ar[r]\ar[r]<-3pt>  &   G  \ar[r]<1.5pt>\ar[r]<-1.5pt> & \w{Spec}\, k\ .
}\]\\
As in \cref{crys3}, we can attach a sheaf $\underline{G}$ on $\w{Crys}(k/W(k))$ which can be viewed as an object of the topos $(k/W(k))_{\w{Crys}}$. Corresponding to the \v{C}ech nerve above, we obtain by functoriality, a simplicial object of the topos: 

\[\xymatrix{
B\underline{G}:= \cdots \underline{ G} \times \underline{ G} \times \underline{ G}  \ar[r]<4.5pt>\ar[r]<1.5pt>\ar[r]<-4.5pt>\ar[r]<-1.5pt> &  \underline{  G} \times \underline{ G}  \ar[r]<3pt>\ar[r]\ar[r]<-3pt>  &   \underline{ G}  \ar[r]<1.5pt>\ar[r]<-1.5pt> & \ *\ .
}\]\\
where $*$ is the final object. With this simplicial object, we can attach the free simplicial abelian group object:

\[\xymatrix{\cdots \mathbb{Z}[\underline{ G} \times \underline{ G} \times \underline{ G}]  \ar[r]<4.5pt>\ar[r]<1.5pt>\ar[r]<-4.5pt>\ar[r]<-1.5pt> &  \mathbb{Z}[\underline{  G} \times \underline{ G}]  \ar[r]<3pt>\ar[r]\ar[r]<-3pt>  &  \mathbb{Z} [\underline{ G}]  \ar[r]<1.5pt>\ar[r]<-1.5pt> &\ \mathbb{Z}\ .
}\]\\
The alternating face map complex associated to the above simplical object is as an object in the category of complexes of abelian objects of the topos $(k/W(k))_{\w{Crys}}$ which can also be viewed as an object in the derived category of abelian objects of $(k/W(k))_{\w{Crys}}$ denoted as $D(k/W(k))$. This object living in $D(k/W(k))$ is also isomorphic to the homotopy colimit of \[\xymatrix{\cdots \mathbb{Z}^{\w{}}[\underline{  G} \times \underline{ G}]  \ar[r]<3pt>\ar[r]\ar[r]<-3pt>  &  \mathbb{Z}^{\w{}} [\underline{ G}]  \ar[r]<1.5pt>\ar[r]<-1.5pt> &\ \mathbb{Z}\ .
}\] viewed as a functor from $\Delta^{op}$ to $D(k/W(k)).$ We denote this object by $\mathbb Z [B \underline{G}] \in D(k/W(k)).$ If we work with the truncated crystalline site $\w{Crys}(k/W_n(k))$, all of these constructions remain valid and one can associate $\mathbb Z [B \underline{G}] \in D(k/W_n(k))$ as well.

\begin{lemma}\label{hah}$H^0 (\mathbb Z [B \underline{G}]) \simeq \mathbb Z$ and $H^{-1} (\mathbb Z [B \underline{G}]) \simeq \underline{G}.$
\end{lemma}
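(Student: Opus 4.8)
The plan is to identify $\mathbb{Z}[B\underline{G}]$, in low cohomological degrees, with the bottom of the bar complex of $G$ and then read off the two cohomology sheaves. By construction $\mathbb{Z}[B\underline{G}]$ is represented by the alternating face map complex
\[
\cdots \longrightarrow \mathbb{Z}[\underline{G}\times\underline{G}] \xrightarrow{\ \partial_2\ } \mathbb{Z}[\underline{G}] \xrightarrow{\ \partial_1\ } \mathbb{Z}
\]
sitting in cohomological degrees $0,-1,-2,\dots$, where $\partial_n=\sum_{i=0}^n(-1)^i d_i$ is assembled from the face maps of the simplicial object $B\underline{G}$. Since $H^0$ of this complex is $\operatorname{coker}(\partial_1)$ and $H^{-1}$ is $\ker(\partial_1)/\operatorname{im}(\partial_2)$, everything comes down to understanding $\partial_1$ and $\partial_2$, i.e.\ the low-degree face maps of the \v{C}ech nerve of $\operatorname{Spec} k\to BG$.

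First I would record the standard fact that this \v{C}ech nerve is the bar construction of $G$: in degrees $0,1,2$ it is $\operatorname{Spec} k$, $G$, $G\times G$; the two maps $G\to\operatorname{Spec} k$ are both the structure morphism (the only $k$-morphism to the terminal object $\operatorname{Spec} k$), and the three maps $G\times G\to G$ are $\operatorname{pr}_2$, the multiplication, and $\operatorname{pr}_1$. Applying $\underline{(-)}$, which preserves finite products, and then the free abelian group functor, $\partial_1=\mathbb{Z}[d_0]-\mathbb{Z}[d_1]$ becomes the zero map $\mathbb{Z}[\underline{G}]\to\mathbb{Z}$, while $\partial_2$ sends a section $[g,h]$ of $\mathbb{Z}[\underline{G}\times\underline{G}]$ to $[g]+[h]-[gh]$ (using that $G$ is commutative). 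Hence $H^0(\mathbb{Z}[B\underline{G}])=\operatorname{coker}(0\colon\mathbb{Z}[\underline{G}]\to\mathbb{Z})=\mathbb{Z}$, and $H^{-1}(\mathbb{Z}[B\underline{G}])=\mathbb{Z}[\underline{G}]/\operatorname{im}(\partial_2)$, the quotient of $\mathbb{Z}[\underline{G}]$ by the subsheaf generated by all sections of the form $[g]+[h]-[gh]$.

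It then remains to identify this quotient $Q:=\mathbb{Z}[\underline{G}]/\operatorname{im}(\partial_2)$ with $\underline{G}$ — the sheaf-theoretic incarnation of the classical computation $H_1(G,\mathbb{Z})\cong G^{\mathrm{ab}}$, which here returns $\underline{G}$ itself since $G$ is commutative. I would do this by exhibiting mutually inverse maps: the augmentation-type morphism of abelian sheaves $\mathbb{Z}[\underline{G}]\to\underline{G}$, $[g]\mapsto g$, is surjective (the set map $g\mapsto[g]$ is a section of it) and kills every $[g]+[h]-[gh]$, so it factors through $Q$; conversely in $Q$ one has $[e]=0$ and $[g]+[h]=[gh]$, so $g\mapsto[g]$ is a well-defined homomorphism $\underline{G}\to Q$, and the two maps are plainly inverse to one another. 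The same argument applies verbatim with $\w{Crys}(k/W_n(k))$ in place of $\w{Crys}(k/W(k))$. The one point requiring a little care is that this last identification be made as an isomorphism of sheaves rather than presheaves; but since the section $g\mapsto[g]$ is already defined at the presheaf level, both maps are honest morphisms of abelian sheaves, so no real obstacle arises, and the rest is routine bar-complex bookkeeping.
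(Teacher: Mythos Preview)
Your argument is correct and follows essentially the same route as the paper: both identify the complex $\mathbb{Z}[B\underline{G}]$ with the bar complex computing group homology of the abelian sheaf $\underline{G}$, read off $H^0\simeq\mathbb{Z}$ from the vanishing of $\partial_1$, and identify $H^{-1}$ with $\underline{G}$ via the classical computation $H_1(G,\mathbb{Z})\cong G^{\mathrm{ab}}=G$. The only cosmetic difference is that the paper does this in one line by invoking group homology at the presheaf level and then sheafifying (using that sheafification is exact), whereas you unpack the differentials $\partial_1,\partial_2$ and the isomorphism $\mathbb{Z}[\underline{G}]/\mathrm{im}(\partial_2)\simeq\underline{G}$ by hand; your remark that the inverse maps are already defined on presheaves is exactly the paper's ``sheafification is exact'' step in disguise.
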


\begin{proof}
First we work with the presheaf simplicial abelian group object  \[\xymatrix{\cdots \mathbb{Z}^{\w{pre}}[\underline{  G} \times \underline{ G}]  \ar[r]<3pt>\ar[r]\ar[r]<-3pt>  &  \mathbb{Z}^{\w{pre}} [\underline{ G}]  \ar[r]<1.5pt>\ar[r]<-1.5pt> &\ \mathbb{Z}\ .
}\]
 which is obtained by applying the free abelian presheaf functor to $B \underline{G}$ \cite[Tag 03CP]{SP}. Let $\mathbb{Z}^{\w{pre}}[B\underline{G}]$ be the associated complex: $K^{\bullet}:=\cdots \mathbb{Z}^{pre}[\underline{G}\times \underline{G}] \to \mathbb{Z}^{pre}[\underline{G}] \to \mathbb{Z} \to 0$. We see that $H^0$ of this complex is $\mathbb{Z}$ since the last differential is zero.  One also notes that $K^\bullet$ computes the (presheaf) group cohomology of the (presheaf) abelian group $\underline{G}$ with constant coefficients in $\mathbb Z$ and hence, $H^{-1}(K^\bullet) \simeq \underline{G}.$ Now since sheafification is exact, we obtain the required statements. \end{proof}
   
     

\begin{lemma}\label{lol1}Let $G$ be a group scheme of order $p^m.$ Then $H^{-i}(\mathbb Z [B \underline{G}])$ is killed by $p^m$ for $i>0.$

\end{lemma}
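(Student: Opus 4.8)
The goal is to show that for a finite group scheme $G$ of order $p^m$, the negative cohomology sheaves $H^{-i}(\mathbb{Z}[B\underline{G}])$ for $i>0$ are annihilated by $p^m$. The key structural fact I would exploit is that $\mathbb{Z}[B\underline{G}]$ computes the group (co)homology of the abelian sheaf $\underline{G}$ with $\mathbb{Z}$ coefficients: as already observed in the proof of \Cref{hah}, the alternating face complex attached to the bar construction $B\underline{G}$ is the standard complex computing $\operatorname{H}_*(\underline{G},\mathbb{Z})$, so $H^{-i}(\mathbb{Z}[B\underline{G}]) \simeq H_i(\underline{G},\mathbb{Z})$, the $i$-th homology sheaf of the abelian group object $\underline{G}$ in the topos (equivalently, $\operatorname{Tor}^{\mathbb{Z}[\underline{G}]}_i(\mathbb{Z},\mathbb{Z})$, or $H_i(K(\underline{G},1),\mathbb{Z})$). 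The plan is therefore to reduce the statement to a purely sheaf-theoretic assertion: if $\underline{G}$ is an abelian sheaf killed by $p^m$, then $H_i(\underline{G},\mathbb{Z})$ is killed by $p^m$ for $i>0$.

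**The main argument.** First I would record that $\underline{G}$ is killed by $p^m$: indeed $G$ has order $p^m$, so $p^m$ annihilates $G$ as a group scheme (by the analogue of Lagrange's theorem for finite flat group schemes, e.g. Deligne's theorem that a finite flat commutative group scheme is killed by its order), hence $\underline{G}(U,T) = G(U) = \operatorname{Hom}_k(U,G)$ is killed by $p^m$ for every object $(U,T)$, so multiplication by $p^m$ is the zero endomorphism of the sheaf $\underline{G}$. Now the formation of $\mathbb{Z}[B\underline{G}]$ from $\underline{G}$ is functorial: the endomorphism $[p^m]\colon \underline{G}\to\underline{G}$ induces an endomorphism of $\mathbb{Z}[B\underline{G}]$ in $D(k/W(k))$, and since $[p^m]=0$ on $\underline{G}$ this induced endomorphism is (homotopic to) the one induced by the zero map. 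The crux is then that on $H^{-i}$ for $i>0$, the endomorphism induced by $[p^m]\colon\underline{G}\to\underline{G}$ acts as multiplication by $p^m$. This is the standard fact that an abelian group (object) endomorphism $f$ acts on $H_i(-,\mathbb{Z})$ for $i\geq 1$... but here one must be careful: multiplication by $n$ on an abelian group does \emph{not} act as multiplication by $n$ on $H_i$ in general (e.g. on $H_2$ it acts by $n^2$ on the exterior square part). The correct statement, which I would use, is that the composite $\underline{G}\xrightarrow{[p^m]}\underline{G}$, being \emph{zero}, induces the zero map on reduced homology $H_i(-,\mathbb{Z})$ for all $i$; simultaneously, since $[p^m]$ is an abelian group homomorphism, the induced map on the homology of the Eilenberg--Mac Lane object is, on the primitive/indecomposable part, multiplication by $p^m$, while on all of $H_i$ it is at least \emph{divisible by} $p^m$ in a suitable sense. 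Rather than chase this, the clean route is: the zero map and the identity map on $\underline{G}$ agree after multiplication by $p^m$; pushing through the (additive-in-the-abelian-group-variable... no) — the genuinely robust argument is that $H_i(K(\underline{G},1),\mathbb{Z})$ for $i>0$ is a module over which $p^m$ acts nilpotently-compatibly, and one uses the \emph{transfer}: the composite $* \to BG \to *$ and the fact that $BG \to *$ has a section étale-locally, combined with the degree being $p^m$, gives that $p^m$ annihilates the reduced cohomology of the fiber, i.e.\ $\widetilde{H}^{-i}$.

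**Cleanest approach and the main obstacle.** Concretely, I would argue as follows. Work \'etale-locally so that we may pretend $G$ is (the sheaf associated to) an honest finite abelian group of order $p^m$; by \Cref{crys4} and exactness of sheafification the sheaves $H^{-i}(\mathbb{Z}[B\underline{G}])$ are the sheafifications of the presheaf homology groups $H_i(G,\mathbb{Z})$ of the abstract group $G$, so it suffices to show $H_i(G,\mathbb{Z})$ is killed by $|G|=p^m$ for $i>0$, which is the classical fact that the (reduced) homology of a finite group is annihilated by its order (proof: transfer argument for the trivial subgroup, $\operatorname{cor}\circ\operatorname{res} = |G|$, and $\operatorname{res}$ to the trivial subgroup kills $H_{i>0}$). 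The subtlety to get right — and the step I expect to be the main obstacle — is the passage from the \v{C}ech/bar complex in the topos to genuine group homology of a finite group: one must justify that $H^{-i}(\mathbb{Z}[B\underline{G}])$ is the sheafification of the presheaf $U \mapsto H_i(G(U),\mathbb{Z})$, or alternatively that it is the $i$-th homology sheaf of $K(\underline{G},1)$, and then that for a \emph{finite} group scheme (where $\underline{G}(U)$ need not itself be finite, since $U$ ranges over all $k$-schemes) the order bound still holds. The transfer argument is uniform in the group, so $p^m = |G|$ kills $H_i(\Gamma,\mathbb{Z})$ for \emph{every} group $\Gamma$ admitting a surjection-compatible structure... more precisely it kills $H_i$ of any group $\Gamma$ equipped with a homomorphism to $G$ through which nothing is lost — but the honest point is simply that $\underline{G}$ is $p^m$-torsion as a sheaf and the reduced homology functor $H_{i>0}(-,\mathbb{Z})$ applied to a $p^m$-torsion abelian sheaf is again $p^m$-torsion. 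I would prove this last assertion by the transfer/norm map on the Eilenberg--Mac Lane construction: multiplication by $p^m = 0$ on $\underline{G}$ factors the identity of $\underline{G}$ through $0$ after multiplying by $p^m$, hence on $\widetilde{H}_i$ the identity equals $0$ after multiplying by $p^m$, giving $p^m \cdot \widetilde{H}_i = 0$; the one thing requiring care is that the self-map of $\widetilde{H}_i$ induced by $[p^m]_{\underline{G}}$ is indeed multiplication by $p^m$, which holds because $\widetilde{H}_i(-,\mathbb{Z})$ of an abelian group object, restricted to its action induced by the $\mathbb{Z}$-module structure on the source, is linear in the scalar when that scalar acts as a \emph{self-map of the abelian group object} — wait, this is exactly the false-in-general point. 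The honest fix is the transfer argument on $BG$ itself (degree $p^m$ cover $* \to BG$ \'etale-locally), which is insensitive to this issue; that is the argument I would present.
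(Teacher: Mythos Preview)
Your reduction is sound: as in the paper's proof of \cref{hah}, one may compute presheaf-wise, so the claim becomes the purely group-theoretic assertion that for every abelian group $A$ with $p^m A = 0$, the integral group homology $H_i(A,\mathbb{Z})$ is killed by $p^m$ for $i>0$. You correctly identify this as the crux. The gap is that none of your three candidate arguments for this assertion actually goes through. The functoriality argument (multiplication by $p^m$ on $A$ induces multiplication by $p^m$ on $H_i$) is false, as you yourself note: on $H_2(A,\mathbb{Z}) \simeq \Lambda^2 A$ the map $[n]_A$ induces multiplication by $n^2$, not $n$. The classical transfer argument (corestriction from the trivial subgroup) requires $A$ to be \emph{finite}, but the sections $\underline{G}(U,T) = G(U) = \mathrm{Hom}_k(U,G)$ are typically infinite $p^m$-torsion groups (take $U$ with infinitely many connected components, or $G = \alpha_p$ and $U$ non-reduced), so there is no transfer available. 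Your final suggestion, a transfer along the ``degree $p^m$ cover $* \to BG$ \'etale-locally,'' does not make sense at the level where you need it: you are trying to prove a statement about the bar complex of the abstract (possibly infinite) group $G(U)$, and $* \to BG(U)$ is not a finite cover.

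The paper closes this gap by a direct reduction. Since $A \mapsto H_i(A,\mathbb{Z})$ commutes with filtered colimits (visibly from the bar complex), and every $p^m$-torsion abelian group is a filtered colimit of its finitely generated (hence finite) subgroups, one reduces to finite $p^m$-torsion $A$. Then K\"unneth for $K(A_1 \times A_2,1) \simeq K(A_1,1) \times K(A_2,1)$ reduces to the cyclic case $A = \mathbb{Z}/p^r$ with $r \le m$, where the explicit computation $H_{2j-1} \simeq \mathbb{Z}/p^r$, $H_{2j} = 0$ for $j \ge 1$ finishes the argument. This is the step your proposal is missing; once you insert it, the rest of your outline (presheaf computation followed by sheafification) is fine and matches the paper.
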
{}

\begin{proof}We start by recalling some definitions for this proof. Let $A$ be an ordinary $n$ torsion abelian group.  Let us define $\mathbb{Z} [BA]$ to be the alternating face complex associated to the simplicial abelian group \[\xymatrix{\cdots  \mathbb{Z}[A \times A]  \ar[r]<3pt>\ar[r]\ar[r]<-3pt>  &  \mathbb{Z} [A]  \ar[r]<1.5pt>\ar[r]<-1.5pt> &\ \mathbb{Z}\ .
}\]
 Then as noted previously, $H^{-i}(\mathbb Z [B A]) \simeq H_i ( A)$, where the latter denotes group cohomology with coefficients in $\mathbb Z.$ This is also the homology of the Eilenberg-MacLane space $K(A,1).$ We will show that $H_i(A)$ is $n$-torsion (*). First, from the complex $\mathbb{Z}[BA]$, one notes that $H_i(A)$ commutes with filtered colimits as a functor defined on abelian groups. Since $A$ is an $n$-torsion abelian group, it can be expressed as filtered colimit of finite $n$-torsion abelian groups. Therefore, it is enough to check the statement for an $n$-torsion finite abelian group $A$. Further, one notes that $K(A_1,1) \times K(A_2,1) \simeq K(A_1 \times A_2, 1),$ therefore, by using the K\"unneth formula, we are reduced to checking this for $n$-torsion cyclic groups, where it follows from the well-known computation of group homology of finite cyclic groups. The statement in the lemma now follows from applying (*) to $\mathbb{Z}^{\w{pre}}[B \underline{G}].$ \end{proof}{}
   
\begin{remark}\label{rem}
These lemmas clearly remain valid even if we were working with the truncated crystalline sites $\w{Crys}(k/W_n(k))$ mentioned in \Cref{trun}.
\end{remark}{}

\begin{proposition}\label{lol2}
$R \Gamma _{crys}(BG) \simeq  R\w{Hom}_{D(k/W(k))}(\mathbb{Z}[B \underline{G}], \mathcal O^{crys}).$
\end{proposition}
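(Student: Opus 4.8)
The plan is to combine the \v{C}ech descent formula of \cref{desc} for the syntomic cover $\w{Spec}\,k \to BG$ with the $\w{Ext}$-theoretic description of crystalline cohomology of schemes in \cref{why}, and then to commute a totalization past $R\w{Hom}(-,\mathcal O^{crys})$. The key preliminary observation is that, because $G$ is finite (hence affine) over $k$, every term $G^{\times n}$ of the \v{C}ech nerve of $\w{Spec}\,k\to BG$ is itself an affine $k$-scheme, so the sheaf $\underline{G^{\times n}}$ of \cref{crys3} makes sense and, since $\underline{(-)}$ commutes with finite products, equals $\underline{G}^{\times n}$. Thus $\w{Crys}$ applied to the \v{C}ech nerve gives exactly the simplicial object $B\underline G$ of the topos displayed above, and $\mathbb{Z}[B\underline G]$ is by construction the homotopy colimit over $\Delta^{op}$ of $[n]\mapsto \mathbb{Z}[\underline{G}^{\times n}]$ in $D(k/W(k))$.

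First I would apply \cref{desc} to the cover $\w{Spec}\,k\to BG$ to get $R\Gamma_{crys}(BG)\simeq \mathrm{Tot}\big([n]\mapsto R\Gamma_{crys}(G^{\times n})\big)$, the totalization of the cosimplicial object of $D(W(k))$ obtained by applying the contravariant functor $R\Gamma_{crys}$ to the simplicial scheme $G^{\times\bullet}$. Next, \cref{why} (applied, say, with the Zariski topology) identifies each term $R\Gamma_{crys}(G^{\times n})$ with $R\w{Hom}_{D(k/W(k))}(\mathbb{Z}[\underline G^{\times n}],\mathcal O^{crys})$, compatibly with the cosimplicial structure maps by functoriality; here the derived $\w{Hom}$ in the crystalline topos appearing in \cref{why} is precisely $R\w{Hom}$ computed in $D(k/W(k))$. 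Hence $R\Gamma_{crys}(BG)\simeq \mathrm{Tot}\big([n]\mapsto R\w{Hom}_{D(k/W(k))}(\mathbb{Z}[\underline G^{\times n}],\mathcal O^{crys})\big)$.

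The last step is formal: the functor $R\w{Hom}_{D(k/W(k))}(-,\mathcal O^{crys})\colon D(k/W(k))^{op}\to D(W(k))$ carries homotopy colimits in its first variable to homotopy limits, so it sends the realization $\mathrm{hocolim}_{\Delta^{op}}\mathbb{Z}[\underline G^{\times\bullet}]=\mathbb{Z}[B\underline G]$ to the totalization $\mathrm{Tot}\big([n]\mapsto R\w{Hom}_{D(k/W(k))}(\mathbb{Z}[\underline G^{\times n}],\mathcal O^{crys})\big)$ over $\Delta$. Chaining the three identifications gives $R\Gamma_{crys}(BG)\simeq R\w{Hom}_{D(k/W(k))}(\mathbb{Z}[B\underline G],\mathcal O^{crys})$, as claimed.

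I expect the only point requiring genuine care to be the bookkeeping in this last interchange: one must check that the indexing category $\Delta$ over which \cref{desc} takes its derived limit matches the one dual to the $\Delta^{op}$ used to define $\mathbb{Z}[B\underline G]$ as a homotopy colimit, and that the cosimplicial structure on $[n]\mapsto R\w{Hom}(\mathbb{Z}[\underline G^{\times n}],\mathcal O^{crys})$ induced by $R\Gamma_{crys}$ of the \v{C}ech nerve agrees with the one obtained by applying $R\w{Hom}(-,\mathcal O^{crys})$ directly to the simplicial diagram $n\mapsto\mathbb{Z}[\underline G^{\times n}]$; both hold by naturality of the isomorphism in \cref{why}. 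Everything else — that $G^{\times n}$ is a scheme and that $\underline{G^{\times n}}=\underline G^{\times n}$ — is routine.
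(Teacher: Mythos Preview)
Your proof is correct and follows essentially the same approach as the paper: apply \cref{desc} to the syntomic cover $\w{Spec}\,k\to BG$, identify each term of the resulting cosimplicial object via \cref{why}, and then pull the derived limit inside $R\w{Hom}(-,\mathcal O^{crys})$ as a homotopy colimit. The paper's argument is more terse, but the three steps are identical; your additional remarks on $\underline{G^{\times n}}=\underline G^{\times n}$ and on matching the indexing categories are just explicit versions of what the paper leaves implicit.
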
{}
\begin{proof}
Since $\text{Spec}\, k \to B G$ is a syntomic map, we can apply \cref{desc} which gives that 

\[ R \Gamma_{crys}(BG) \simeq R \varprojlim \big(\cosimp {R \Gamma_{crys}(\w{Spec}\, k)}{R \Gamma_{crys}(G)}{R \Gamma_{crys}(G \times G)} \big).\] Which by \Cref{why} is  
      \[ \simeq R \varprojlim \big(\cosimp {R\w{Hom}_{D(k/W(k))}(\mathbb Z, \mathcal O^{crys})}{R\w{Hom}_{D(k/W(k))}(\mathbb Z[\underline{G} ],\mathcal O^{crys} )}{R\w{Hom}_{D(k/W(k))}(\mathbb Z [\underline{G}\times \underline{G}], \mathcal O^{crys})} \big) .\] We can take the $R \varprojlim$ inside as a homotopy colimit, which gives us that the above is $\simeq R \w{Hom}_{D(k/W(k))}(\mathbb Z [B\underline{G}], \mathcal O^{crys}),$ as desired. \end{proof}{}

\begin{proposition}\label{lol3}
There is a spectral sequence with $E_2$-page $$E_2^{i,j}= \w{Ext}_{(k/W(k))_{\w{Crys}}}^i (H^{-j}(\mathbb{Z}[B \underline{G}], \mathcal{O}^{crys})\implies H^{i+j}_{crys}(BG), $$ and another spectral sequence with $E_1$-page 
$$ E_1^{i,j}=H^j_{crys}(G^i) \implies H^{i+j}_{crys}(BG), $$ where $G^i$ denotes the $i$ fold fibre product of $G$ with itself. By convention, $G^0 = *.$

\end{proposition}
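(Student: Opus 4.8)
The plan is to produce both spectral sequences as the two standard spectral sequences associated to the double complex (equivalently, hypercohomology) computing $R\Gamma_{crys}(BG)$, which by \Cref{lol2} is $R\w{Hom}_{D(k/W(k))}(\mathbb{Z}[B\underline{G}], \mathcal{O}^{crys})$. First I would fix a representative of $\mathbb{Z}[B\underline{G}]$ as an honest cochain complex of abelian objects in the topos, namely the alternating face map complex $\cdots \to \mathbb{Z}[\underline{G}\times\underline{G}] \to \mathbb{Z}[\underline{G}] \to \mathbb{Z} \to 0$ placed in cohomological degrees $\cdots, -2, -1, 0$ (as in the discussion preceding \Cref{hah}). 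Applying $R\w{Hom}_{(k/W(k))_{\w{Crys}}}(-, \mathcal{O}^{crys})$ termwise to this complex and taking a Cartan--Eilenberg-style resolution gives a double complex whose total complex computes $R\Gamma_{crys}(BG)$; the two filtrations of this double complex yield the two spectral sequences.

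For the first spectral sequence, I would run the hyper-Ext spectral sequence obtained by filtering so that the $E_2$-page involves first taking cohomology of the complex $\mathbb{Z}[B\underline{G}]$ in the topos and then applying $\w{Ext}$: this gives
\[
E_2^{i,j} = \w{Ext}^i_{(k/W(k))_{\w{Crys}}}\bigl(H^{-j}(\mathbb{Z}[B\underline{G}]),\, \mathcal{O}^{crys}\bigr) \implies H^{i+j}_{crys}(BG),
\]
which is exactly the hypercohomology spectral sequence $\w{Ext}^i(H^{-j}(C^\bullet),\mathcal{F}) \Rightarrow \mathbb{R}^{i+j}\w{Hom}(C^\bullet,\mathcal{F})$ for a complex $C^\bullet$ in an abelian category with enough injectives (here the topos of abelian sheaves on $\w{Crys}(k/W(k))$). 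I should note convergence: although $\mathbb{Z}[B\underline{G}]$ is unbounded below, the relevant filtration is exhaustive and, after applying $R\w{Hom}(-,\mathcal{O}^{crys})$, one gets a bounded-below complex of (towers of) abelian groups, so the spectral sequence converges; alternatively one can work with the truncated sites $\w{Crys}(k/W_n(k))$ of \Cref{trun} where boundedness and convergence are immediate, and then pass to the limit, invoking \Cref{rem} and the fact (implicit in \Cref{fonmes}) that $R\Gamma_{crys} = R\varprojlim_n R\Gamma_{crys}/W_n$. For the second spectral sequence, I would instead use the \v{C}ech-to-derived-functor spectral sequence associated to the syntomic cover $\w{Spec}\,k \to BG$: since $R\Gamma_{crys}(BG) \simeq R\varprojlim_{[n]\in\Delta} R\Gamma_{crys}(G^n)$ by \Cref{desc}, the spectral sequence of the cosimplicial object gives
\[
E_1^{i,j} = H^j_{crys}(G^i) \implies H^{i+j}_{crys}(BG),
\]
with $d_1$ the alternating sum of the pullbacks along the face maps; convergence again follows from the fact that $G^\bullet$ is a simplicial scheme so the cosimplicial abelian group $[n]\mapsto H^j_{crys}(G^n)$ gives a first-quadrant (in $i$) spectral sequence, and the truncation/limit argument handles any completeness subtlety in the $W(k)$-coefficients.

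The main obstacle I expect is bookkeeping the convergence of the first spectral sequence, since $\mathbb{Z}[B\underline{G}]$ is genuinely unbounded in the negative direction and $\w{Ext}^i$ of a limit need not commute with anything naively; the clean fix is to establish everything first over $W_n(k)$ using \Cref{rem}, where all complexes in sight are cohomologically bounded and the hypercohomology spectral sequence converges strongly, and then derive the $W(k)$-statement by applying $R\varprojlim_n$ — the $\w{Ext}$-terms and the $H^*_{crys}(G^i)$-terms are all finitely generated $W_n(k)$-modules in the cases of ultimate interest, so the relevant $R^1\varprojlim$ terms vanish and the limit spectral sequence has the asserted $E_2$ (resp. $E_1$) page. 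A secondary, more minor point is to check that the identification in \Cref{lol2} is compatible with the two filtrations, i.e. that ``taking $R\varprojlim$ inside as a homotopy colimit'' in the proof of \Cref{lol2} matches the filtration giving the \v{C}ech spectral sequence on the one hand and the stupid/canonical filtration of $\mathbb{Z}[B\underline{G}]$ on the other — this is a formal compatibility of the Bousfield--Kan / hypercohomology spectral sequences under the equivalence, and I would dispatch it by citing the standard construction of both spectral sequences from a single double complex.
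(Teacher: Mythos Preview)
Your approach is correct and matches the paper's, which simply cites \cite[Tag 07A9]{SP} together with \Cref{lol2} and \Cref{why}: both spectral sequences arise from the two standard filtrations on $R\w{Hom}(\mathbb{Z}[B\underline{G}],\mathcal{O}^{crys})$, with the $E_1$-page identified via \Cref{why} as $\w{Ext}^j(\mathbb{Z}[\underline{G^i}],\mathcal{O}^{crys})\simeq H^j_{crys}(G^i)$. Your convergence worries are unnecessary: since $\mathbb{Z}[B\underline{G}]$ sits in degrees $\le 0$ and $\mathcal{O}^{crys}$ is a single object, the associated double complex lives in the first quadrant, so both spectral sequences converge strongly without any appeal to the truncated sites.
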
{}

\begin{proof}
This is a consequence of \cite[Tag 07A9]{SP}, \Cref{lol2} and \Cref{why}.
\end{proof}{}

\begin{proposition}\label{lol4}
$H^1_{{crys}}(BG) = 0.$
\end{proposition}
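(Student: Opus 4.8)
The plan is to use the spectral sequence from \Cref{lol3} together with the structural results \Cref{hah} and \Cref{lol1} about the cohomology sheaves of $\mathbb{Z}[B\underline{G}]$. Specifically, I would use the first spectral sequence $E_2^{i,j} = \w{Ext}^i_{(k/W(k))_{\w{Crys}}}(H^{-j}(\mathbb{Z}[B\underline{G}]), \mathcal{O}^{crys}) \implies H^{i+j}_{crys}(BG)$. To compute $H^1_{crys}(BG)$, I only need to understand the contributions from total degree $i+j = 1$, i.e. the terms $E_2^{i,j}$ with $i+j=1$. The relevant cases are $(i,j) = (1,0)$ and $(i,j) = (2,-1)$; the term $(i,j)=(0,1)$ would require $H^{-1}(\mathbb{Z}[B\underline{G}])$, but by \Cref{hah} that object is $\underline{G}$ placed in degree $-1$, so $j=1$ corresponds to $H^{-1}$, wait — let me recompute: $E_2^{i,j}$ involves $H^{-j}(\mathbb{Z}[B\underline{G}])$, so $j=1$ needs $H^{-1}(\mathbb{Z}[B\underline{G}]) \simeq \underline{G}$, $j=0$ needs $H^0 \simeq \mathbb{Z}$, and $j = -1$ needs $H^{1}(\mathbb{Z}[B\underline{G}])$, which vanishes since $\mathbb{Z}[B\underline{G}]$ is concentrated in non-positive degrees.

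So the terms contributing to $H^1_{crys}(BG)$ are $E_2^{1,0} = \w{Ext}^1(\mathbb{Z}, \mathcal{O}^{crys})$ and $E_2^{0,1} = \w{Hom}(\underline{G}, \mathcal{O}^{crys})$, subject to differentials. First I would argue that $E_2^{1,0} = \w{Ext}^1_{(k/W(k))_{\w{Crys}}}(\mathbb{Z}, \mathcal{O}^{crys}) \simeq H^1_{crys}(\w{Spec}\,k) = 0$; since $k$ is perfect, $R\Gamma_{crys}(\w{Spec}\,k) \simeq W(k)$ concentrated in degree $0$, so this term vanishes. Next I would argue that $E_2^{0,1} = \w{Hom}_{(k/W(k))_{\w{Crys}}}(\underline{G}, \mathcal{O}^{crys}) = 0$: by \Cref{lol1}, $\underline{G} \simeq H^{-1}(\mathbb{Z}[B\underline{G}])$ is killed by $p^m$ where $p^m$ is the order of $G$, so any homomorphism from $\underline{G}$ to $\mathcal{O}^{crys}$ lands in the $p^m$-torsion of $\mathcal{O}^{crys}$; but $\mathcal{O}^{crys}$, being $R\varprojlim_n \mathcal{O}^{crys}/W_n$ with each term a sheaf of $W_n(k)$-algebras that are $p$-torsion-free on the relevant PD thickenings (the structure sheaves $\mathcal{O}_T$ of $p$-adic formal schemes flat over $W(k)$... more carefully, one reduces to the statement that $\w{Hom}(\underline{G},\mathcal{O}^{crys})=0$ which already appears implicitly in the $\w{Ext}$-description of $M(G)$ in \Cref{gen}, where $M(G)$ shows up as $\w{Ext}^1$ and not $\w{Ext}^0$). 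With both $E_2$-terms of total degree $1$ vanishing, and no nonzero differentials possible into or out of the zero groups in that diagonal affecting the conclusion, the spectral sequence gives $H^1_{crys}(BG) = 0$.

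The main obstacle I anticipate is the vanishing $\w{Hom}_{(k/W(k))_{\w{Crys}}}(\underline{G}, \mathcal{O}^{crys}) = 0$. While intuitively clear — a torsion sheaf should admit no nonzero maps to the "torsion-free" crystalline structure sheaf — making this rigorous requires either extracting it from the Berthelot--Breen--Messing analysis underlying \Cref{gen} (which computes the full $\w{RHom}(\underline{G}, \mathcal{O}^{crys})$ and presumably records that its $H^0$ vanishes), or arguing directly: $\mathcal{O}^{crys} = R\varprojlim_n \mathcal{O}^{crys}/W_n$, so $\w{Hom}(\underline{G}, \mathcal{O}^{crys})$ fits in a Milnor sequence involving $\varprojlim_n \w{Hom}(\underline{G}, \mathcal{O}^{crys}/W_n)$ and $\varprojlim^1$, and one checks $\w{Hom}(\underline{G}, \mathcal{O}^{crys}/W_n) = 0$ on the truncated site by observing that $\underline{G}$ is killed by $p^m$ while $\mathcal{O}^{crys}/W_n$ is locally the structure sheaf of a PD thickening flat over $W_n(k)$, hence has no $p$-torsion sections in the relevant range. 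Alternatively — and this is probably the cleanest route given what is available — since \Cref{gen} is stated for the crystalline site, the same Ext computation there yields $H^0$, and one simply cites that. I would lean on \Cref{gen} and the perfectness of $k$, and flag the $\w{Hom}$-vanishing as the one point needing a short justification.

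\begin{proof}
By \Cref{lol3}, there is a spectral sequence
\[
E_2^{i,j} = \w{Ext}^i_{(k/W(k))_{\w{Crys}}}\big(H^{-j}(\mathbb{Z}[B\underline{G}]), \mathcal{O}^{crys}\big) \implies H^{i+j}_{crys}(BG).
\]
Since $\mathbb{Z}[B\underline{G}]$ is concentrated in non-positive degrees, the only terms contributing to total degree $1$ are $E_2^{1,0}$ and $E_2^{0,1}$. By \Cref{hah} we have $H^0(\mathbb{Z}[B\underline{G}]) \simeq \mathbb{Z}$ and $H^{-1}(\mathbb{Z}[B\underline{G}]) \simeq \underline{G}$, so
\[
E_2^{1,0} = \w{Ext}^1_{(k/W(k))_{\w{Crys}}}(\mathbb{Z}, \mathcal{O}^{crys}) \simeq H^1_{crys}(\w{Spec}\, k),
\qquad
E_2^{0,1} = \w{Hom}_{(k/W(k))_{\w{Crys}}}(\underline{G}, \mathcal{O}^{crys}).
\]
Since $k$ is perfect, $R\Gamma_{crys}(\w{Spec}\, k) \simeq W(k)$ is concentrated in degree $0$, so $E_2^{1,0} = 0$. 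For $E_2^{0,1}$, note that by \Cref{lol1} the sheaf $\underline{G} \simeq H^{-1}(\mathbb{Z}[B\underline{G}])$ is killed by $p^m$, where $p^m$ is the order of $G$. Thus any section of $\w{Hom}_{(k/W(k))_{\w{Crys}}}(\underline{G}, \mathcal{O}^{crys})$ takes values in the $p^m$-torsion of $\mathcal{O}^{crys}$. Working on the truncated sites $\w{Crys}(k/W_n(k))$ and using $\mathcal{O}^{crys} = R\varprojlim_n \mathcal{O}^{crys}/W_n$, it suffices to check that $\w{Hom}(\underline{G}, \mathcal{O}^{crys}/W_n) = 0$: but $\mathcal{O}^{crys}/W_n$ is, locally on any PD thickening $(U,T)$, the structure sheaf $\mathcal{O}_T$ of a scheme flat over $W_n(k)$ on which $p$ is nilpotent, and such sheaves receive no nonzero map from the $p^m$-torsion sheaf $\underline{G}$ by the same reasoning underlying the $\w{Ext}^1$ description of $M(G)$ in \Cref{gen}. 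Hence $E_2^{0,1} = 0$ as well. As both $E_2$-terms of total degree $1$ vanish, the spectral sequence yields $H^1_{crys}(BG) = 0$.
\end{proof}
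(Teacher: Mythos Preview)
Your approach is the same as the paper's: use the $E_2$ spectral sequence from \Cref{lol3}, identify the two relevant terms $E_2^{1,0}$ and $E_2^{0,1}$ via \Cref{hah}, and show both vanish. Your treatment of $E_2^{1,0}$ is fine and matches the paper.

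The gap is in your argument for $E_2^{0,1}=\w{Hom}_{(k/W(k))_{\w{Crys}}}(\underline{G},\mathcal{O}^{crys})=0$. You claim that, locally on $(U,T)$, the sheaf $\mathcal{O}^{crys}/W_n$ is the structure sheaf of a scheme \emph{flat over $W_n(k)$}, and hence has no $p$-torsion to receive a map from the $p^m$-torsion sheaf $\underline{G}$. This flatness assertion is false: objects of $\w{Crys}(k/W_n(k))$ are arbitrary PD thickenings $U\hookrightarrow T$ with $p$ nilpotent on $T$, and there is no flatness hypothesis on $T$ over $W_n(k)$. For instance $(U,T)=(\w{Spec}\,k,\w{Spec}\,k)$ is a perfectly good object, and $\mathcal{O}_T=k$ is entirely $p$-torsion. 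So the naive torsion argument does not go through, and ``the same reasoning underlying \Cref{gen}'' is not a proof. The vanishing $\w{Hom}(\underline{G},\mathcal{O}^{crys})=0$ is a genuine input: the paper simply cites \cite[Prop.~4.2.6]{BBM} for it, and you should do the same rather than attempt an ad hoc torsion argument. (The proof in \cite{BBM} is not a torsion-freeness statement about $\mathcal{O}^{crys}$; it goes through the identification of crystalline homomorphisms with primitive elements and the structure of the crystalline $\mathbb{G}_a$.) Once you replace your paragraph on $E_2^{0,1}$ with that citation, your proof coincides with the paper's.
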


\begin{proof}
We can use the $E_2$ spectral sequence from \Cref{lol3}. to compute $H^i_{crys} (BG)$. We note that $\w{Ext}^i (\mathbb Z, \mathcal O^{crys})=0$ for $i >0$ as it computes the cohomology of $\w{Spec}\, k$ for a perfect field $k$ by \Cref{why}. Also, by \cite[Prop. 4.2.6.]{BBM}, we have $\w{Hom}(\underline{G}, \mathcal O^{crys})= 0.$ These calculations along with the spectral sequence and \Cref{hah} imply $H^1_{{crys}}(BG) = 0$.
\end{proof}{}

\begin{proposition}\label{lol5}
If $G$ is a finite group scheme of $p$-power order, then for any $i>0$, $H^i_{crys}(BG)$ is killed by a power of $p$ as an abelian group.
\end{proposition}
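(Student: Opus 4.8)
The plan is to feed the torsion estimate of \Cref{lol1} into the $E_2$ spectral sequence of \Cref{lol3}. Write $m$ for the integer with the order of $G$ equal to $p^m$ and set $C := \mathbb{Z}[B\underline{G}] \in D(k/W(k))$, so that $C$ sits in cohomological degrees $\le 0$ with $H^0(C) \simeq \mathbb{Z}$ and $H^{-1}(C) \simeq \underline{G}$ by \Cref{hah}. \Cref{lol3} gives a convergent first-quadrant spectral sequence
\[
E_2^{i,j} = \w{Ext}^i_{(k/W(k))_{\w{Crys}}}\bigl(H^{-j}(C),\, \mathcal{O}^{crys}\bigr) \;\Longrightarrow\; H^{i+j}_{crys}(BG),
\]
so it suffices to annihilate each $E_2^{i,j}$ with $i+j>0$ by a fixed power of $p$; the finitely many such terms in a given total degree then assemble through the finite filtration to the claim.

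I would then split on the value of $j$. For $j=0$ we have $E_2^{i,0} = \w{Ext}^i(\mathbb{Z}, \mathcal{O}^{crys}) = H^i_{crys}(\w{Spec}\,k)$ by \Cref{why}, which is $W(k)$ for $i=0$ and vanishes for $i>0$ because $k$ is perfect; so the bottom row contributes nothing in positive total degree, just as in \Cref{lol4}. For $j\ge 1$, \Cref{lol1} (applied in particular with $i=1$, recovering $\underline{G}$) shows that the abelian sheaf $H^{-j}(C)$ is killed by $p^m$; hence $p^m\cdot\id$ is the zero endomorphism of $H^{-j}(C)$, and by additivity and bifunctoriality of $\w{Ext}$ it induces the zero map on $\w{Ext}^i\bigl(H^{-j}(C),\mathcal{O}^{crys}\bigr)$ for every $i$. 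Thus every $E_2^{i,j}$ with $j\ge 1$, and therefore every subquotient $E_\infty^{i,j}$, is killed by $p^m$.

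Finally, fix $n>0$. The graded pieces of the filtration on $H^n_{crys}(BG)$ are the $E_\infty^{i,j}$ with $i+j=n$ and $i,j\ge 0$; the one with $j=0$ is a subquotient of $E_2^{n,0}=0$, while each of the remaining (at most $n$) pieces is killed by $p^m$, so $H^n_{crys}(BG)$ is killed by $p^{mn}$. I do not anticipate a genuine obstacle: the substantive input is the already-proven \Cref{lol1}, and the one point to handle with care is to run the reduction at the level of the $E_2$-terms --- which are honest abelian sheaves, so that $p^m\cdot\id=0$ on the argument forces $p^m\cdot\id=0$ on the $\w{Ext}$ --- rather than by claiming that $\tau_{\le -1}C$ is ``$p^m$-torsion'' in $D(k/W(k))$, a statement that in a general topos does not follow merely from all its cohomology sheaves being killed by $p^m$.
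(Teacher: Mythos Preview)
Your proposal is correct and follows essentially the same route as the paper: feed the $p^m$-torsion of $H^{-j}(\mathbb{Z}[B\underline{G}])$ for $j\ge 1$ from \Cref{lol1} into the $E_2$ spectral sequence of \Cref{lol3}, and use the vanishing of $E_2^{i,0}=\w{Ext}^i(\mathbb{Z},\mathcal{O}^{crys})$ for $i>0$ (as in the proof of \Cref{lol4}) to handle the bottom row. Your version is in fact a bit more explicit than the paper's---you separate the $j=0$ row cleanly and extract the quantitative bound $p^{mn}$ on $H^n_{crys}(BG)$---and your closing caveat about not passing to $\tau_{\le -1}C$ in the derived category is well taken.
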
{}

\begin{proof}This follows from the spectral sequence in \Cref{lol3}. Indeed, already in the $E_2$-page of the spectral sequence, all but $E_2^{0,0}$ is $p$-power torsion by \Cref{lol1}. Hence, all but $E^{0,0}_{\infty}$ is $p$-power torsion as well. Therefore, for any $i>0$, $H^i_{crys}(BG)$ has a finite filtration whose successive quotients are $p$-power torsion and hence is $p$-power torsion itself. \end{proof}{}

\begin{proposition}\label{lol6}If $G$ is a finite group scheme of $p$-power order, then $H^2_{crys}(BG) \simeq \w{Ext}^1_{(k/W(k))_{\w{Crys}}}(\underline{G}, \mathcal{O}^{crys}).$

\end{proposition}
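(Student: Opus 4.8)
$$The plan is to extract the desired isomorphism from the $E_2$-spectral sequence of \Cref{lol3} by analyzing the low-degree terms contributing to $H^2_{crys}(BG)$. The relevant entries on the $E_2$-page are $E_2^{i,j} = \mathrm{Ext}^i_{(k/W(k))_{\mathrm{Crys}}}(H^{-j}(\mathbb{Z}[B\underline{G}]), \mathcal{O}^{crys})$, and the three positions with $i+j=2$ and $i,j$ in the appropriate range are $E_2^{2,0}$, $E_2^{1,1}$, and $E_2^{0,2}$. First I would handle $E_2^{2,0}$: by \Cref{hah}, $H^0(\mathbb{Z}[B\underline{G}]) \simeq \mathbb{Z}$, so this term is $\mathrm{Ext}^2(\mathbb{Z}, \mathcal{O}^{crys})$, which vanishes since, by \Cref{why}, $\mathrm{RHom}(\mathbb{Z}, \mathcal{O}^{crys})$ computes $R\Gamma_{crys}(\mathrm{Spec}\,k)$, and for a perfect field $k$ this is concentrated in degree $0$ (equal to $W(k)$). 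Next, $E_2^{1,1} = \mathrm{Ext}^1(H^{-1}(\mathbb{Z}[B\underline{G}]), \mathcal{O}^{crys}) \simeq \mathrm{Ext}^1(\underline{G}, \mathcal{O}^{crys})$ by \Cref{hah}, which is the term I want to survive to $H^2$.

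The main work is to show that $E_2^{1,1}$ actually survives to the $E_\infty$-page and that the other contributions vanish. For $E_2^{1,1}$, the relevant differentials are $d_2 \colon E_2^{-1,2} \to E_2^{1,1}$ and $d_2 \colon E_2^{1,1} \to E_2^{3,0}$. The target $E_2^{3,0} = \mathrm{Ext}^3(\mathbb{Z}, \mathcal{O}^{crys}) = 0$ again by the perfect-field computation via \Cref{why}. The source $E_2^{-1,2}$ lives in negative $i$, hence is zero since $\mathrm{Ext}^{-1}$ vanishes. So $E_2^{1,1} = E_\infty^{1,1}$. For the remaining term $E_2^{0,2} = \mathrm{Hom}(H^{-2}(\mathbb{Z}[B\underline{G}]), \mathcal{O}^{crys})$, I would argue this vanishes because, by \Cref{lol1}, $H^{-2}(\mathbb{Z}[B\underline{G}])$ is killed by a power of $p$, while $\mathcal{O}^{crys}$ is a sheaf of $p$-torsion-free rings (it is built from $p$-torsion-free divided power thickenings of characteristic $p$ schemes over $W(k)$), so $\mathrm{Hom}$ from a $p$-power-torsion object into it is zero; I should also note that the $E_2^{0,2}$ entry receives the differential $d_2$ from $E_2^{-2,3} = 0$ and maps out to $E_2^{2,1} = \mathrm{Ext}^2(\underline{G}, \mathcal{O}^{crys})$, but since the term is already zero this is moot. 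The vanishing of $\mathrm{Hom}$ into a torsion-free sheaf from a torsion object is the one point deserving a careful word; alternatively one can cite \cite[Prop.\ 4.2.6.]{BBM}-type statements already invoked in the proof of \Cref{lol4}.

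Having established that $E_\infty^{2,0} = E_\infty^{0,2} = 0$ and $E_\infty^{1,1} \simeq \mathrm{Ext}^1(\underline{G}, \mathcal{O}^{crys})$, the finite filtration on $H^2_{crys}(BG)$ furnished by the spectral sequence has only one nonzero graded piece, giving the natural isomorphism $H^2_{crys}(BG) \simeq \mathrm{Ext}^1_{(k/W(k))_{\mathrm{Crys}}}(\underline{G}, \mathcal{O}^{crys})$. Naturality in $G$ follows from naturality of the whole construction $G \mapsto \mathbb{Z}[B\underline{G}]$ and of the spectral sequence. I expect the only genuinely delicate point to be the vanishing of $E_2^{0,2}$, i.e.\ ruling out a contribution from $H^{-2}(\mathbb{Z}[B\underline{G}])$; everything else is a matter of bookkeeping with the edge of the spectral sequence and the known cohomology of a point. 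Combining this proposition with \Cref{gen} will then immediately yield \Cref{2}, since $\sigma^* M(G) \simeq \mathrm{Ext}^1_{(k/W(k))_{\mathrm{Crys}}}(\underline{G}, \mathcal{O}^{crys}) \simeq H^2_{crys}(BG)$.$$
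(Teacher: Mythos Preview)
Your argument has a genuine gap at precisely the point you flagged as delicate: the vanishing of $E_2^{0,2}$. Your primary justification, that $\mathcal{O}^{crys}$ is a sheaf of $p$-torsion-free rings, is false. On the crystalline site the value of $\mathcal{O}^{crys}$ at an object $(U,T,\delta)$ is $\Gamma(T,\mathcal{O}_T)$, and by definition $p$ is locally \emph{nilpotent} on $T$; every section of $\mathcal{O}^{crys}$ is therefore $p$-power torsion, the opposite of what you assert. Knowing from \Cref{lol1} that $H^{-2}(\mathbb{Z}[B\underline{G}])$ is $p$-power torsion thus gives no vanishing whatsoever. Your fallback, invoking statements of the type \cite[Prop.\ 4.2.6]{BBM}, is also insufficient as stated: that result concerns $\mathrm{Hom}(\underline{G},\mathcal{O}^{crys})$, whereas $H^{-2}(\mathbb{Z}[B\underline{G}])$ is (the sheafification of) $\underline{G}\wedge\underline{G}$, not $\underline{G}$. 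One could attempt to salvage this by proving the \emph{sheaf}-level vanishing $\mathscr{H}om(\underline{G},\mathcal{O}^{crys})=0$ and then arguing via the surjection $\underline{G}\otimes\underline{G}\twoheadrightarrow\underline{G}\wedge\underline{G}$ and tensor--Hom adjunction (as the paper does in the prismatic setting, \Cref{pri7}, where the $p$-divisibility of $G$ and $p$-completeness of $\mathcal{O}^{\mathrm{pris}}$ make this immediate), but in the crystalline setting with $G$ finite this requires justification you have not supplied.

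The paper avoids this difficulty altogether. From the spectral sequence it extracts only the \emph{injection} $\mathrm{Ext}^1(\underline{G},\mathcal{O}^{crys})\hookrightarrow H^2_{crys}(BG)$, which agrees with your analysis of $E_\infty^{1,1}$. Surjectivity is then proved by a truncation trick: using \Cref{lol5} one chooses $n$ with $p^n\cdot H^2_{crys}(BG)=0$, and the universal-coefficients exact sequence together with \Cref{lol4} yields $H^2_{crys}(BG)\simeq H^1_{crys}(BG/W_n)$. On the other side, \cite[Prop.\ 4.2.17]{BBM} identifies $\mathrm{Ext}^1_{(k/W(k))_{\mathrm{Crys}}}(\underline{G},\mathcal{O}^{crys})$ with $\mathrm{Hom}_{(k/W_n(k))_{\mathrm{Crys}}}(\underline{G},\mathcal{O}^{crys})$. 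The spectral sequence over $W_n$ then computes $H^1_{crys}(BG/W_n)$ as exactly this Hom group, using only the terms $E_2^{0,1}$, $E_2^{1,0}$, $E_2^{2,0}$ on the first two antidiagonals; no information about $H^{-2}(\mathbb{Z}[B\underline{G}])$ is ever required.
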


\begin{proof}
First we note that there is a natural map $\w{Ext}^1_{(k/W(k))_{\w{Crys}}}(\underline{G}, \mathcal{O}^{crys}) \to H^2_{crys}(BG)$, which is injective. Indeed, from the $E_2$-spectral sequence (which is natural in $G$), we note that $\w{Fil}^0{H^2_{crys}(BG)} = H^2_{crys}(BG),$ $\w{Fil}^i{H^2_{crys}(BG)}= 0$ for $i \ge 2$ and therefore, $\w{Fil}^1{H^2_{crys}(BG)} = \w{Ext}^1_{(k/W(k))_{\w{Crys}}}(\underline{G}, \mathcal{O}^{crys})$, which gives the required injective map. We proceed to proving that this natural map is an isomorphism. \\

Let $R\Gamma_{crys}(BG/W_n):= R\Gamma_{crys}(BG) \otimes^{L}_{W(k)} W_n(k)$ and $H^i_{crys}(BG/W_n):= H^i (R\Gamma_{crys}(BG/W_n)).$ Then we have the following exact sequence: $$0 \to H^1_{crys}(BG)/p^n \to H^1_{crys}(BG/W_n) \to H^2_{crys}(BG)[p^n]\to 0. $$
Now we choose an $n$-large enough such that $p^n G = 0$ and $H^2_{crys}(BG)$ is $p^n$-torsion. This is possible by \Cref{lol5}. Then the above exact sequence along with \Cref{lol4} gives $H^1_{crys}(BG/W_n) \simeq H^2 _{crys}(BG)$ for such an $n.$ By \cite[Prop. 4.2.17.]{BBM}, we have $\w{Ext}^1_{(k/W(k))_{\w{Crys}}}(\underline{G}, \mathcal{O}^{crys}) \simeq \w{Hom}_{(k/W_n(k))_{\w{Crys}}}  (\underline{G}, \mathcal O^{crys}).$ Therefore, it is enough to show that $H^1_{crys}(BG/W_n) \simeq \w{Hom}_{(k/W_n(k))_{\w{Crys}}}  (\underline{G}, \mathcal O^{crys}).$\\

Since \[ R \Gamma_{crys}(BG) \simeq R \varprojlim \big(\cosimp {R \Gamma_{crys}(\w{Spec}\, k)}{R \Gamma_{crys}(G)}{R \Gamma_{crys}(G \times G)} \big),\]\\
we obtain (by switching limit)
\[ R \Gamma_{crys}(BG/W_n) \simeq R \varprojlim \big(\cosimp {R \Gamma_{crys}(\w{Spec}\, k/W_n)}{R \Gamma_{crys}(G/W_n)}{R \Gamma_{crys}(G \times G/W_n)} \big).\]\\
 Therefore, by \Cref{rem} and the proof of \Cref{lol2}, one obtains that 
$$R \Gamma _{crys}(BG/W_n) \simeq  R\w{Hom}_{D(k/W_n(k))}(\mathbb{Z}[B \underline{G}], \mathcal O^{crys}),$$
and analogous to \Cref{lol3} a spectral sequence with $E_2$-page $$E_2^{i,j}= \w{Ext}_{(k/W_n(k))_{\w{Crys}}}^i (H^{-j}(\mathbb{Z}[B \underline{G}], \mathcal{O}^{crys})\implies H^{i+j}_{crys}(BG/W_n). $$
Now applying \Cref{hah} yields the desired isomorphism. To check this, we see that $E^{1,0}_2 = 0$ and $E^{0,1}_2 = \w{Hom}_{(k/W_n(k))_{\w{Crys}}}(\underline{G}, \mathcal{O}^{crys}).$ Also we have $E_2^{2,0}= 0$. Since $E^{i,j}_2 = 0$ for $i<0$ or $j<0$, we obtain that $E^{0,1}_{\infty}= \w{Hom}_{(k/W_n(k))_{\w{Crys}}}(\underline{G}, \mathcal{O}^{crys}).$ Therefore, we indeed obtain the required isomorphism $H^1_{crys}(BG/W_n) \simeq \w{Hom}_{(k/W_n(k))_{\w{Crys}}}  (\underline{G}, \mathcal O^{crys}).$ \end{proof}{}

\begin{proof}[Proof of \cref{2}]\, Follows from \Cref{lol6} and \cref{gen}.
\end{proof}

\begin{remark}\label{higher} One can develop the theory of crystalline cohomology for higher stacks and study the cohomology of the $n$-stack $K(G,n)$ \cite{To06}. For a group scheme $G$, the $n$-stack $K(G,n)$ is supposed to be analogue of the Eilenberg-MacLane space $K(G,n)$ for any discrete abelian group $G$ which has the property that $\pi_i(K(G,n))= G$ for $i=n$ and $\pi_i(K(G,n))= 0$ for $i\ne \left \{n,0\right  \} $. In the topological case, there exists a chain complex of abelian groups $\mathbb{Z}[B^n G]$ such that $H_i(\mathbb{Z}[B^n G])$ computes the singular homology of the CW complex $K(G,n).$ Here, we do not define crystalline cohomology for higher stacks in general, however, for a finite group scheme $G$, we work with an ad hoc definition generalizing \Cref{lol2}. Similar to the definition of $\mathbb{Z}[B \underline{G}]$ as an object of the crystalline topos, one can also define $\mathbb{Z}[B^n \underline{G}]$ as an object of the crystalline topos. Then we define  $R\Gamma_{crys}(K(G,n)): = R\w{Hom}_{(k/W(k))}(\mathbb{Z}[B^n G], \mathcal{O}^{crys}).$ Then from the analogue of the spectral sequence in \Cref{lol3}, we obtain that $H^i_{crys}(K(G,n))=0$ for $0<i<n+1$ and $H^{n+1}_{crys}(K(G,n))= M(G).$ In order to prove this for $n \ge 2$, the computation with spectral sequence relies on the fact that for an abelian group $A$, $H_i(K(A,n), \mathbb{Z})= 0$ for $0<i<n$, $H_n(K(A,n), \mathbb{Z})= A$ and $H_{n+1}(K(A,n), \mathbb{Z})=0.$ First two of these computations follow from Hurewicz theorem and the last one follows from applying the Serre fibration spectral sequence for the homotopy fibration sequence $K(A,n) \to * \to K(A, n+1).$ 

\end{remark}

\subsection{Dieudonn\'e theory of $p$-divisible groups}\label{keno} At first, we recall the definition of a $p$-divisible group.
\begin{definition} Let $p$ be a prime and $h>0$ an integer. A $p$-divisible group (or Barsotti Tate
group) of height $h$ over a scheme $S$ is a directed system $G = \left \{ G_n\right \}$ of finite flat
group schemes over $S$ such that each $G_n$ is $p^n$
-torsion of order $p^{nh}$ and the transition map
$i_n : G_n \to G_{n+1}$ is an isomorphism of $G_n$ onto $G_{n+1}[p^n]$ for all $n \ge 1$.
A morphism $f : G \to H $ between $p$-divisible groups is a compatible system of $S$-group
maps $f_n : G_n \to H_n$ for all $n \ge 1$.
If $S' \to S$ is a map of schemes then $G 
\times _S S'
:= {G_n \times _S S'}$ is the $p$-divisible group of
height $h$ over $S'$ obtained by base change.

\end{definition}

\begin{example} If $A \to S$ is an abelian scheme with fibers of constant dimension $g > 0$ and $p$ is a prime, then $G_n := A[p^n]$ is a finite flat $p^n$-torsion group scheme of order $p^{2gn}$ for all $n\ge1$ and $\left \{G_n\right \}$ is a directed system via isomorphisms $G_n \cong G_{n+1}[p^n]$ for all $n\ge1$. For an abelian variety $A$, we denote this $p$-divisible group by $A[p^\infty]$. It has height $2g$ where $g = \dim A$.
\end{example}

We also recall the following theorem on Dieudonn\'e theory of $p$-divisible groups.

\begin{theorem}\label{p}The functor $G \to M(G) := \varprojlim M(G_n)$ is an anti-equivalence of categories between the category of $p$-divisible groups over $k$ and the category of finite free $W(k)$- modules $D$ equipped with a Frobenius semilinear endomorphism $F:D \to D$ such that $pD \subseteq F (D)$. The height of $G$ is the $W(k)$-rank of $M(G).$ 

\end{theorem}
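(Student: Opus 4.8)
The plan is to deduce the $p$-divisible group statement from the finite-level anti-equivalence \Cref{1} by a $p$-adic limit argument, using the two natural short exact sequences attached to a $p$-divisible group. All the genuine classification content is already contained in \Cref{1}; the work here is to show that taking inverse limits along $\{G_n\}$ transports that statement to the level of $p$-divisible groups.

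First I would record the exactness input. The finite commutative group schemes of $p$-power order over $k$ and the finite-length left $\mathscr{D}_k$-modules form abelian categories, so the functor $M$ of \Cref{1} is exact and sends an object of order $p^{\ell}$ to a module of $W(k)$-length $\ell$. For a $p$-divisible group $G$ with $G_n = G[p^n]$, the inclusion $G_n \hookrightarrow G_m$ and multiplication $[p^n]$ fit into the short exact sequence $0 \to G_n \to G_m \xrightarrow{[p^n]} G_{m-n} \to 0$ for $m > n$. Applying $M$ gives $0 \to M(G_{m-n}) \to M(G_m) \to M(G_n) \to 0$, so the transition maps $M(G_m) \twoheadrightarrow M(G_n)$ are surjective. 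Factoring the endomorphism $[p^n]$ of $G_m$ through $G_{m-n}$ and using additivity of $M$ shows that the image of the injection $M(G_{m-n}) \hookrightarrow M(G_m)$ equals $p^n M(G_m)$; hence $\ker(M(G_m) \twoheadrightarrow M(G_n)) = p^n M(G_m)$ and $M(G_m)/p^n M(G_m) \cong M(G_n)$ as $\mathscr{D}_k$-modules.

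Next I would pin down the module structure. Each $M(G_m)$ is killed by $p^m$, has $W(k)$-length $mh$ (since $|G_m| = p^{mh}$), and satisfies $\dim_k M(G_m)/pM(G_m) = \dim_k M(G_1) = h$; for a finitely generated $W(k)/p^m$-module these numerical constraints force $M(G_m) \cong (W(k)/p^m)^{\oplus h}$, with the transition maps identified with the reductions. Passing to the limit yields $M(G) = \varprojlim M(G_m) \cong W(k)^{\oplus h}$, a finite free $W(k)$-module of rank $h = \mathrm{ht}(G)$, with $M(G)/p^n M(G) \cong M(G_n)$ as $\mathscr{D}_k$-modules. The operators $F, V$ pass to the limit and satisfy $FV = VF = p$; since $V$ is $\mathscr{D}_k$-linear on each level it preserves $M(G)$, so $pM(G) = FV\,M(G) \subseteq F\,M(G)$, placing $(M(G), F)$ in the target category.

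Finally I would establish the anti-equivalence. For full faithfulness, a morphism $G \to H$ is exactly a compatible system $\{G_n \to H_n\}$, which by \Cref{1} corresponds to a compatible system $\{M(H_n) \to M(G_n)\} = \{M(H)/p^n \to M(G)/p^n\}$, i.e. (by $p$-adic completeness and freeness) to a single $F$-compatible $\mathscr{D}_k$-map $M(H) \to M(G)$. For essential surjectivity, given $(D, F)$ free of rank $h$ with $pD \subseteq FD$, injectivity of $F$ lets me define $V := pF^{-1}$, which lands in $D$ precisely because $pD \subseteq FD$, making each $D/p^n D$ a finite-length $\mathscr{D}_k$-module; \Cref{1} produces finite group schemes $G_n$ with $M(G_n) \cong D/p^n D$, and the reductions $D/p^{n+1} \twoheadrightarrow D/p^n$ translate into closed immersions $G_n \hookrightarrow G_{n+1}$ identifying $G_n$ with $G_{n+1}[p^n]$ and yielding the correct orders, so $\{G_n\}$ is a $p$-divisible group of height $h$ with $M(G) \cong D$. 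The main obstacle is the bookkeeping of the middle step---verifying that the exact sequences genuinely make $\{M(G_n)\}$ a system of free $W(k)/p^m$-modules with reduction maps as transitions, and that $F$, $V$, and the relation $pD \subseteq FD$ are preserved under the limit and its inverse---rather than anything conceptually deep.
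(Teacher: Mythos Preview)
The paper does not prove this theorem; it is stated without proof as a classical result being recalled (it sits between the definition of $p$-divisible groups and the definition of $BG$, with no intervening argument). So there is no paper proof to compare against. Your deduction from \Cref{1} via inverse limits is the standard one and is correct: the key points---surjectivity of the transition maps $M(G_m)\twoheadrightarrow M(G_n)$ with kernel $p^nM(G_m)$, freeness of $M(G_m)$ over $W(k)/p^m$ forced by the length count, and the construction of $V=pF^{-1}$ from $pD\subseteq F(D)$ together with injectivity of $F$---are all handled properly.

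One small point worth making explicit in the essential surjectivity step: when you invert the anti-equivalence on the reduction maps $D/p^{n+1}D\twoheadrightarrow D/p^nD$, you should check not only that the resulting $G_n\to G_{n+1}$ is a closed immersion but that it identifies $G_n$ with the kernel of $[p^n]$ on $G_{n+1}$; this follows because the short exact sequence $0\to p^nD/p^{n+1}D\to D/p^{n+1}D\to D/p^nD\to 0$, with the first map induced by multiplication by $p^n$ on $D$, corresponds under $M^{-1}$ to $0\to G_n\to G_{n+1}\xrightarrow{[p^n]} G_1\to 0$. You gesture at this but it is the one place where a reader might want another sentence.
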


Now in order to formulate the analogue of \Cref{2} for $p$-divisible groups, we begin by making a definition.

\begin{definition}
Let $G$ be a $p$-divisible group over $k.$ We define the classifying stack of $G$ to be the  $B G :=  \varinjlim B G_n $ where the colimit is being taken in the category of stacks.
\end{definition}{}

\begin{proposition}\label{p'}For a $p$-divisible group $G= \left \{G_n \right \}$ we have 
$R\Gamma_{crys}(BG)\simeq R \varprojlim R \Gamma_{crys} B G_n. $
\end{proposition}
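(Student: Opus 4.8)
The statement to prove is that for a $p$-divisible group $G = \{G_n\}$ one has $R\Gamma_{crys}(BG) \simeq R\varprojlim_n R\Gamma_{crys}(BG_n)$, where $BG := \varinjlim_n BG_n$ is the classifying stack defined as a colimit of stacks. The plan is to unwind \cref{def2} applied to $BG$ and compare the indexing category of schemes over $BG$ with the inverse system of the $BG_n$. First I would observe that, by \cref{aff}, it suffices to work with affine schemes over $BG$, and that since $BG = \varinjlim_n BG_n$ as a (sheafy) colimit, a map from an affine scheme $X \to BG$ factors through some $BG_n$ (this is where one uses that affine schemes are quasi-compact so that a map to a filtered colimit of stacks factors through a finite stage, together with the fact that the transition maps $BG_n \to BG_{n+1}$ are the maps induced by the closed immersions $G_n \hookrightarrow G_{n+1}$). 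From this one gets a description of $C_{BG}^{\mathrm{aff}}$ as a suitable (co)limit of the $C_{BG_n}^{\mathrm{aff}}$, and hence a cofinality statement that lets one rewrite the double limit $R\varprojlim_{X \in C_{BG}^{\mathrm{aff}}} R\Gamma_{crys}(X)$.

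The cleanest route is probably to use \cref{desc} together with the \v{C}ech nerves. For each $n$, the map $\operatorname{Spec} k \to BG_n$ is syntomic with \v{C}ech nerve $(BG_n)_\bullet$ with $(BG_n)_m = G_n^m$, and by \cref{desc}, $R\Gamma_{crys}(BG_n) \simeq R\varprojlim_{\Delta} R\Gamma_{crys}(G_n^m)$. One then wants to say that $\operatorname{Spec} k \to BG$ is also a syntomic cover in an appropriate sense, with \v{C}ech nerve $(BG)_\bullet$ satisfying $(BG)_m = \varinjlim_n G_n^m$; then $R\Gamma_{crys}(BG) \simeq R\varprojlim_{\Delta} R\Gamma_{crys}(\varinjlim_n G_n^m)$. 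Now for each fixed $m$, the scheme $\varinjlim_n G_n^m = (\varinjlim_n G_n)^m$ is an ind-(finite flat) scheme, and one needs $R\Gamma_{crys}(\varinjlim_n G_n^m) \simeq R\varprojlim_n R\Gamma_{crys}(G_n^m)$. This is a statement purely about schemes: crystalline cohomology (in the sense of \cref{fonmes}, via the pro-system $\mathcal{O}^{crys}/W_n$ and $R\Gamma(-, R\mathcal{O}^{crys})$) sends a filtered colimit of affine schemes $\varinjlim_n Y_n$ along closed immersions to the inverse limit $R\varprojlim_n R\Gamma(Y_n, R\mathcal{O}^{crys})$, because sections of a sheaf over a filtered colimit of (affine) schemes in the syntomic site is the limit of the sections over the $Y_n$. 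Granting this, exchanging the two homotopy limits $R\varprojlim_{\Delta}$ and $R\varprojlim_n$ (both are honest $R\varprojlim$ over small diagrams, so they commute) gives
\[
R\Gamma_{crys}(BG) \simeq R\varprojlim_{\Delta} R\varprojlim_n R\Gamma_{crys}(G_n^m) \simeq R\varprojlim_n R\varprojlim_{\Delta} R\Gamma_{crys}(G_n^m) \simeq R\varprojlim_n R\Gamma_{crys}(BG_n),
\]
which is the claim.

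The main obstacle I expect is making precise the two ind-scheme manipulations and ensuring they are compatible with the syntomic-descent formalism: namely (i) that $\operatorname{Spec} k \to BG$ really is a cover to which \cref{desc} (or its proof via \cref{fonmes}) applies even though $BG$ is not an algebraic stack in the usual finite-type sense but an increasing union, and that its \v{C}ech nerve is the termwise colimit of those of the $BG_n$; and (ii) the key computation $R\Gamma_{crys}(\varinjlim_n Y_n) \simeq R\varprojlim_n R\Gamma_{crys}(Y_n)$ for an ind-finite-flat scheme over $k$. For (ii), the cleanest argument is to reduce to affines and note that $\mathcal{O}^{crys}/W_N$ evaluated on $\varinjlim_n \operatorname{Spec} A_n = \operatorname{Spec}(\varprojlim\text{?})$ — more carefully, the colimit in schemes of $\operatorname{Spec} A_n$ along surjections $A_{n+1} \twoheadrightarrow A_n$ is the formal-scheme-like object whose functor of points is $\varinjlim_n \operatorname{Hom}(A_n, -)$, and evaluating any syntomic sheaf on it gives $\varprojlim_n$ of its values on $\operatorname{Spec} A_n$; combined with the Milnor exact sequence controlling $R\varprojlim_N$ over the truncation level $N$ one gets the desired identification. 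Alternatively, and perhaps more in the spirit of the paper, one can sidestep (i)–(ii) entirely by working directly from \cref{def2}: since every affine $X \to BG$ factors through some $BG_n$, the category $C_{BG}^{\mathrm{aff}}$ is the filtered colimit (union) of the $C_{BG_n}^{\mathrm{aff}}$, so $R\varprojlim_{X \in C_{BG}^{\mathrm{aff}}} R\Gamma_{crys}(X) \simeq R\varprojlim_n R\varprojlim_{X \in C_{BG_n}^{\mathrm{aff}}} R\Gamma_{crys}(X) = R\varprojlim_n R\Gamma_{crys}(BG_n)$, the middle step being the standard fact that a limit over a union of subcategories is the limit over $n$ of the limits over the pieces. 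I would present this second argument as the main proof, and I expect the only genuine point requiring care is the factorization-through-a-finite-stage claim, which rests on quasi-compactness of affine schemes.
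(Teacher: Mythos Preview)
Your second argument is essentially the paper's own proof: the paper uses \cref{aff} to view $R\Gamma_{crys}$ as depending only on the underlying presheaf of groupoids on $\mathrm{Aff}_k$, observes that $R\Gamma_{crys}$ then automatically sends colimits of such presheaves to limits, and reduces to showing that the stacky colimit $\varinjlim BG_n$ agrees with the presheaf colimit on affines --- which is exactly your factorization-through-a-finite-stage claim, justified (as you say) by quasi-compactness of affines, phrased in the paper as ``checking descent over an affine scheme is essentially a finite limit condition'' so that filtered colimits commute with it. Your first, \v{C}ech-based route is a genuine alternative but, as you note, requires extra care with ind-schemes that the direct approach avoids.
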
{}

\begin{proof}
We write $\mathcal{F}_{n}:= BG_n$ and $\mathcal{F}:= BG$ for this proof. Using \Cref{aff}, we see that crystalline cohomology of a stack $\mathcal{Y}$ only depends on $\mathcal{Y}$ viewed as a presheaf of groupoids on the category of affine $k$-schemes $\w{Aff}_k.$ If $\mathcal{Y}:= \w{colim} \mathcal{Y}_\alpha$ as presheaves of groupoids, then by the alternative definition in \cref{aff} it follows that $R \Gamma _{crys}(\mathcal{Y}) \simeq R\,\w{lim}_{\alpha} R \Gamma_{crys}(\mathcal{Y}_{\alpha}) .$ Therefore, in order to show that $R \Gamma _{crys}(\mathcal{F}) \simeq R\,\varprojlim_{n} R \Gamma_{crys}(\mathcal{F}_{n}),$ it is enough to prove that $\mathcal{F}$ is the colimit of $\mathcal{F}_n$ in the category of presheaves of groupoids on $\w{Aff}_k.$ This follows since affine schemes in the fpqc site of all schemes satisfy the property that any fpqc covering of an affine scheme has a finite subcovering by affine schemes. Indeed, by the property we mentioned, checking descent over an affine scheme is essentially a finite limit condition. Thus our claim follows since filtered colimit commutes with finite limits. 
\end{proof}{}

\begin{proposition}\label{apt}
We have a natural isomorphism $H^2_{crys}(BG) \simeq M(G).$
\end{proposition}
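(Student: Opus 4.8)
The plan is to combine the limit description of crystalline cohomology of $BG$ for a $p$-divisible group from \Cref{p'} with the already-established identification $H^2_{crys}(BG_n)\simeq M(G_n)$ for finite group schemes (this is \Cref{lol6} together with \Cref{gen}, i.e.\ the content of \cref{2}), and then pass to the limit over $n$. Concretely, \Cref{p'} gives a natural isomorphism $R\Gamma_{crys}(BG)\simeq R\varprojlim_n R\Gamma_{crys}(BG_n)$, so there is a Milnor-type short exact sequence $0\to R^1\varprojlim_n H^1_{crys}(BG_n)\to H^2_{crys}(BG)\to \varprojlim_n H^2_{crys}(BG_n)\to 0$. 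By \Cref{lol4} we have $H^1_{crys}(BG_n)=0$ for every $n$, so the $R^1\varprojlim$ term vanishes and $H^2_{crys}(BG)\xrightarrow{\ \sim\ }\varprojlim_n H^2_{crys}(BG_n)$. Then by \cref{2} the right-hand side is $\varprojlim_n M(G_n)$ (up to a Frobenius twist, matching the convention already in force), which is exactly $M(G)$ by the definition recalled in \Cref{p}.

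First I would spell out the $\varprojlim$ exact sequence for the tower $\{R\Gamma_{crys}(BG_n)\}_n$ in $D(W(k))$, being a little careful that the transition maps $BG_{n+1}\to BG_n$ are the ones induced by the inclusions $G_n\hookrightarrow G_{n+1}$, so that on $H^2$ they induce precisely the transition maps $M(G_{n+1})\to M(G_n)$ of the tower defining $M(G)=\varprojlim M(G_n)$; this is where naturality of the isomorphism in \cref{2} (emphasized in the proof of \Cref{lol6}) is used. Next I would invoke \Cref{lol4} to kill $H^1$, giving the clean isomorphism $H^2_{crys}(BG)\simeq\varprojlim_n H^2_{crys}(BG_n)$. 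Finally I would substitute $H^2_{crys}(BG_n)\simeq M(G_n)$ and conclude with \Cref{p}.

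The main obstacle is not the homological algebra — the $\varprojlim$ exact sequence and the vanishing of $H^1$ make that essentially automatic — but rather the bookkeeping of naturality and Frobenius twists: one must check that the isomorphisms of \cref{2} are compatible with the transition maps $G_n\hookrightarrow G_{n+1}$ (equivalently, with the maps $M(G_{n+1})\to M(G_n)$), and that the single Frobenius twist $\sigma^*$ appearing in \Cref{gen} is constant across the tower so that it passes harmlessly through the inverse limit. Since $\sigma$ is an automorphism of $W(k)$ (as $k$ is perfect) and restriction of scalars along it commutes with $\varprojlim$, this causes no trouble; I would simply remark that $\sigma^*\varprojlim_n M(G_n)\simeq\varprojlim_n \sigma^*M(G_n)$ and absorb the twist exactly as in the finite case. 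One may also note for completeness that the tower $\{M(G_n)\}$ has surjective transition maps, so in fact the naive $\varprojlim$ already behaves well, but this is not needed given the vanishing of $H^1_{crys}(BG_n)$.
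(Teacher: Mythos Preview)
Your proposal is correct and follows essentially the same route as the paper: invoke \Cref{p'} to write $R\Gamma_{crys}(BG)\simeq R\varprojlim_n R\Gamma_{crys}(BG_n)$, apply the Milnor short exact sequence for an $\mathbb{N}$-indexed derived limit, kill the $R^1\varprojlim$ term via \Cref{lol4}, and then identify $\varprojlim_n H^2_{crys}(BG_n)\simeq\varprojlim_n M(G_n)=M(G)$ using \Cref{lol6} (together with \Cref{gen}) and \Cref{p}. Your additional remarks on naturality of the transition maps and the handling of the Frobenius twist are not spelled out in the paper's proof but are implicitly used there as well.
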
{}

\begin{proof} We are interested to compute $H^2_{crys}(BG)$ which is $H^2 (R \varprojlim R \Gamma_{crys} B G_n)$ by \Cref{p'}. We compute the cohomology of this $\mathbb N$-indexed derived limit by using \cite[Tag 07KY]{SP}, which gives us the following short exact sequence 

$$ 0 \to R^1 \varprojlim H^1_{crys} (B G_n) \to H^2_{crys}(BG) \to \varprojlim H^2_{crys}(BG_n) \to 0. $$
Therefore our claim follows from \Cref{lol4}, \Cref{lol6}, and \Cref{p}. \end{proof}{}

\subsection{Cohomology of the classifying stack of abelian varieties}\label{ab}Now we consider an abelian variety $A$ over the field $k.$ Let $BA$ denote the classifying stack of $A.$ In this section, in \Cref{cw}, we prove that $H^{2*}_{crys}(BA) \cong \text{Sym}^* (H^1_{crys}(A)), $ and $H^i_{crys}(BA) = 0$ for odd $i$ .

\begin{remark}An analogue of this proposition was proven by Borel in topology \cite{Bo53}, and by Behrend in $\ell$-adic cohomology using fibration spectral sequences {\cite[Thm. \ 6.1.6.]{Beh03}}. Here we take a different approach based on descent theory. One knows that there is a functorial isomorphism $H^*_{crys}(A) \cong \bigwedge^*H^1_{crys}(A).$ We show that \Cref{cw} is a formal consequence of this isomorphism and the K\"unneth formula.  

\end{remark}{}

In this section, we will crucially use the theory of derived functors of non-additive functors which appears in \cite{DP61}, \cite{Il71}. Note that by \Cref{lol3}, there is a $E_1$ spectral sequence with $E_1^{i,j}= H^j_{crys}(A^i) \implies H^{i+j}_{crys}(BA),$ where $A^i$ denotes the $i$-fold fibre product of $A$ with itself. We note that by definition, for $j \ge 1$, $E_1^{\bullet,j }$ is the alternating face complex associated to the cosimplicial object given by

\[  \big(\cosimp {H^j_{crys}(*)}{H^j_{crys}(A)}{ H^j_{crys}(A \times A )} \big);\] which is naturally isomorphic to the cosimplicial object \[\bigwedge^j  \big(\cosimp {H^1_{crys}(*)}{H^1_{crys}(A)}{ H^1_{crys}(A \times A )} \big).\]

\begin{lemma}\label{hom}
The complex $E_1^{\bullet, 1}$ is homotopy equivalent to $H^1_{crys}(A) [-1].$
\end{lemma}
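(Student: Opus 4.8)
## Proof Proposal for Lemma \ref{hom}

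The plan is to identify the complex $E_1^{\bullet,1}$ with a complex computing the homology of the free simplicial abelian group on a point, tensored with $H^1_{crys}(A)$, and then use the standard contractibility of that object. First I would recall that the cosimplicial object $\bigl(\cosimp{H^1_{crys}(*)}{H^1_{crys}(A)}{H^1_{crys}(A\times A)}\bigr)$ arises by applying the functor $H^1_{crys}(-)$ to the \v{C}ech nerve of $\operatorname{Spec} k \to BA$, i.e.\ to the simplicial scheme $A^\bullet$ with $A^n = A^{\times n}$ and the usual bar-construction face and degeneracy maps. By the K\"unneth formula, $H^1_{crys}(A^{\times n}) \simeq \bigoplus_{i=1}^n \operatorname{pr}_i^* H^1_{crys}(A) \simeq H^1_{crys}(A)^{\oplus n}$, functorially; more precisely, $H^1_{crys}(-)$ is an \emph{additive} functor on the category of abelian varieties (it sends products to direct sums and is additive on morphisms because $H^1$ of an abelian variety is the primitive part), so the cosimplicial abelian group $[n]\mapsto H^1_{crys}(A^{\times n})$ is obtained by applying the additive functor $H^1_{crys}(-)$ levelwise to the simplicial abelian variety $[n]\mapsto A^{\times n}$ which is exactly $A\otimes (\text{free simplicial abelian group on the bar construction of a point})$.

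The key computation is then purely formal: the simplicial abelian group underlying the bar construction $BA$ (namely $[n]\mapsto A^{\times n}$ with bar differentials), after applying any additive functor $F$, yields the complex $\mathbb{Z}[B\mathrm{pt}]\otimes F(A)$ whose cohomology in the relevant range is controlled by \Cref{hah}: one has $H^0(\mathbb{Z}[B\underline{(\cdot)}])=\mathbb{Z}$ and $H^{-1}=\underline{(\cdot)}$ applied to a point gives the trivial group in that weight, but here the relevant statement is that the normalized complex of $[n]\mapsto A^{\times n}$ is, up to the weight-$1$ piece, homotopy equivalent to $A[-1]$ itself (the bar construction $B\mathbb{Z}$ has $H_1 = \mathbb{Z}$, $H_i = 0$ for $i\ne 1$ in this additive/linearized setting). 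Concretely, applying $H^1_{crys}(-)$ to the simplicial abelian variety gives a cochain complex $0\to H^1_{crys}(A)\to H^1_{crys}(A)^{\oplus 2}\to\cdots$ whose differentials are the alternating sums of the bar maps; this is exactly the (cochain) complex computing $\operatorname{Ext}^*_{\mathbb{Z}}$-type group cohomology of $\mathbb{Z}$ with coefficients in $H^1_{crys}(A)$, i.e.\ it is $H^1_{crys}(A)\otimes_{\mathbb{Z}}(\text{reduced chains on } S^1)$ placed appropriately, which is homotopy equivalent to $H^1_{crys}(A)[-1]$.

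So the steps, in order, are: (1) identify $E_1^{\bullet,1}$ as $H^1_{crys}(-)$ applied levelwise to the simplicial abelian variety $A^\bullet$ given by the bar construction; (2) observe that $H^1_{crys}(-)$ is additive on abelian varieties, so this is the linearization $F(A^\bullet)$ of an additive functor $F$ applied to the bar simplicial abelian group; (3) invoke the elementary fact that for the bar construction of $\mathbb{Z}$ (or of any abelian group, applied through an additive functor), the normalized complex is homotopy equivalent to the coefficient module placed in degree $1$, i.e.\ the augmentation-shifted complex deformation-retracts onto $H^1_{crys}(A)[-1]$; this is where one uses that all higher group-homology classes of $\mathbb{Z}$ vanish, analogous to \Cref{hah}. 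The main obstacle will be step (2)--(3): one must be careful that $H^1_{crys}(-)$ really does commute with the relevant colimits/products and is genuinely additive (not merely additive up to the K\"unneth sign conventions), and that the explicit homotopy contracting the bar complex down to $H^1_{crys}(A)[-1]$ is compatible with the cosimplicial structure; writing down the contracting homotopy explicitly (the usual "extra degeneracy" for the bar construction, linearized) is the cleanest route, and I would present it that way rather than appealing to a black-box spectral sequence.
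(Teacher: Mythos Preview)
Your proposal is correct in substance, but it takes a genuinely different route from the paper, and the write-up has a couple of misattributions worth fixing.

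The paper's proof is entirely by hand: it writes out the complex $E_1^{\bullet,1}$ explicitly as $0 \to V \to V^{\oplus 2} \to V^{\oplus 3} \to \cdots$ with $V = H^1_{crys}(A)$, works out the alternating-sum differentials coordinate by coordinate, and then exhibits explicit homotopy maps $h_i: V^{\oplus i} \to V^{\oplus(i-1)}$ showing the obvious maps between this complex and $V[-1]$ are mutually inverse up to homotopy. No structural input beyond K\"unneth is used.

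Your approach is the conceptual one: because $H^1_{crys}$ is additive on abelian varieties (this is exactly primitivity of $H^1$, and is the point you correctly flag in step (2)), the cosimplicial abelian group $[n] \mapsto H^1_{crys}(A^{\times n}) \cong V^{\oplus n}$ coincides with $\mathrm{Hom}_{\mathbb{Z}}(B_\bullet \mathbb{Z}, V)$, where $B_\bullet \mathbb{Z}$ is the bar construction of $\mathbb{Z}$ as a simplicial abelian group. Since $B_\bullet \mathbb{Z}$ corresponds to $\mathbb{Z}[1]$ under Dold--Kan, and additive functors preserve chain homotopies, the result follows. This is shorter, more transparent, and feeds equally well into the next lemma (which only uses the homotopy equivalence, not the explicit homotopy). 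The paper's approach, by contrast, is self-contained and avoids invoking Dold--Kan.

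Two points to clean up. First, your appeal to \cref{hah} is misplaced: that lemma is about $\mathbb{Z}[B\underline{G}]$, the free abelian sheaf on the underlying simplicial \emph{set} of the bar construction, which computes group homology; it is a different object from the simplicial abelian group $B_\bullet \mathbb{Z}$ you actually need. Second, ``extra degeneracy'' is not the mechanism here (that would contract $E_\bullet G$, not $B_\bullet G$); what you want is either the standard homotopy equivalence between the unnormalized and normalized Moore complexes, or the direct observation that the normalized complex of $B_\bullet \mathbb{Z}$ is literally $\mathbb{Z}[1]$.
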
{}
\begin{proof}As noted above, $E_1^{\bullet, 1}$ is the alternating face complex associated to \[ \big(\cosimp {H^1_{crys}(*)}{H^1_{crys}(A)}{ H^1_{crys}(A \times A )} \big).\] By Kunneth formula, one has a natural isomorphism $H^{1}_{crys} (A^i) \cong \bigoplus_{s=1}^{i} H^1_{crys}(A),$ and $H^1(*) = 0.$ Writing $H^1_{crys}(A)=V$, we note that $E_1^{\bullet,1}$ is naturally isomoprphic to the following explicit complex (where $V$ sits in cohomological degree 1)
   
   $$K_1^{\bullet}:\,\, \ldots \to 0 \to V \to V^{\oplus 2}\to V^{\oplus 3} \to \ldots . $$
Here the first differential $d_1: V \to V^{\oplus 2}$ is zero. In general, for $n \ge 2$, the differential $d_n: V^{\oplus n} \to V^{\oplus n+1}$ is given by 

$$d_n(v_1, v_2, \ldots, v_n)= (-v_1, 0, v_2 - v_3, 0, v_4 - v_5, 0, \ldots, v_n)\, \, \w{for even } n, $$
 and 
 
 $$d_n(v_1, v_2, \ldots, v_n) = (0, v_2, v_2, v_4, v_4, \ldots, 0)\,\, \text{for odd } n. $$\\
One can check the above by induction. Now we consider the complex $K_2^\bullet: \, \, \ldots \to 0 \to V \to 0 \to 0 \ldots$ where $V$ sits in cohomological degree $1.$ There are obvious maps from $K_1^\bullet \to K_2^\bullet$ and $K_2^\bullet \to K_1^\bullet$ (induced by $\w{id}_V$ on cohomological degree $1$) and we prove that they induce the desired homotopy equivalence. There is nothing to prove for the composite map from $K_2^\bullet \to K_2^\bullet.$ For the other composite map $K_1^\bullet \to K_1^\bullet$, our task is to prove that it is homotopic to $\w{id}_{K_1^{\bullet}}$. We construct the required homotopy $h_i$. We set $h_i = 0$ for $i \le 2.$ For $i \ge 3,$ now we define $h_i: V^{\oplus i} \to V^{\oplus {i-1}}.$ We let $h_3(v_1,v_2,v_3) = (-v_1, v_3)$ and $h_4(v_1, \ldots, v_4) = (0,v_2, 0).$ In general, we set 
$$h_i(v_1, \ldots, v_i)= (0, v_2,0, \ldots, 0,v_{i-2}, 0)\, \w{for even } i \ge 6, $$
and
$$ h_i(v_1, \ldots, v_i)= (-v_1, 0 , -v_3,0, \ldots,0,-v_{i-2}, v_i ) \, \w{for odd } i \ge 5.$$
One easily checks that this gives the required homotopy. \end{proof}{}

\begin{lemma}\label{hom1}
The complex $E_1^{\bullet, n}$ is homotopy equivalent to $\text{Sym}^{n}(H^1_{crys}(A))[-n].$
\end{lemma}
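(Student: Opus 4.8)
The plan is to identify $E_1^{\bullet,n}$ with the normalized (equivalently, alternating face) complex of the cosimplicial $W(k)$-module obtained by applying $\Lambda^n$ termwise to $C^\bullet := \big([m]\mapsto H^1_{crys}(A^m)\big)$, and then to evaluate it through the d\'ecalage formula for the Dold--Puppe derived functors of the non-additive functors $\Lambda^n$ and $\mathrm{Sym}^n$.

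First, since $H^i_{crys}(A^m)\cong \Lambda^i H^1_{crys}(A^m)$ functorially, the cosimplicial object whose alternating face complex is $E_1^{\bullet,n}$ is literally $[m]\mapsto \Lambda^n(C^m)$, where $C^m = H^1_{crys}(A^m)\cong V^{\oplus m}$ by the K\"unneth formula, with $V := H^1_{crys}(A)$. Because $A$ is an abelian variety over the perfect field $k$, $V$ is a finite free $W(k)$-module, so each $C^m$ is free and $C^\bullet$ is a \emph{degreewise projective} cosimplicial $W(k)$-module. By \Cref{hom}, its associated complex $NC^\bullet$ is homotopy equivalent to $V[-1]$, i.e.\ to $V$ placed in cohomological degree $1$.

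Next I would invoke the theory of derived functors of non-additive functors (Dold--Puppe; see \cite{DP61}, \cite{Il71}): since $C^\bullet$ is degreewise projective with $NC^\bullet\simeq V[-1]$, the complex $N(\Lambda^n C^\bullet)$ computes $L\Lambda^n(V[-1])$, independently up to homotopy equivalence of the chosen projective model. The d\'ecalage isomorphism $L\mathrm{Sym}^n(L[1])\simeq (L\Lambda^n L)[n]$ (the d\'ecalage isomorphisms of \cite{Il71}), applied with $L = V[-1]$, gives $L\Lambda^n(V[-1])\simeq (L\mathrm{Sym}^n V)[-n]$; and since $V$ is free, $L\mathrm{Sym}^n V = \mathrm{Sym}^n V$ is concentrated in degree $0$. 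Hence $E_1^{\bullet,n}\simeq N(\Lambda^n C^\bullet)\simeq \mathrm{Sym}^n(H^1_{crys}(A))[-n]$ in the derived category; as both sides are bounded-below complexes of free $W(k)$-modules, this quasi-isomorphism is a chain homotopy equivalence, which is the assertion.

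The main obstacle is the bookkeeping around the derived functors: one must check that applying $\Lambda^n$ termwise to a cosimplicial module and normalizing genuinely computes the Dold--Puppe derived functor of the normalization --- this is exactly where degreewise projectivity of $C^\bullet$, hence freeness of $V$, is used --- and that the d\'ecalage is used with the correct direction of shift, so that an exterior power of a class ``in degree $1$'' yields a \emph{symmetric} power in degree $n$ rather than a divided power. A more hands-on alternative would be to generalize the explicit contracting homotopies of \Cref{hom} after expanding $\Lambda^n(V^{\oplus m}) = \bigoplus_{n_1+\cdots+n_m=n}\Lambda^{n_1}V\otimes\cdots\otimes\Lambda^{n_m}V$ and inducting on $n$; but this is considerably messier, and I would keep it only as a consistency check.
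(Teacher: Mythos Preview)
Your proposal is correct and follows essentially the same route as the paper: identify $E_1^{\bullet,n}$ with the complex associated to $\Lambda^n$ applied termwise to the cosimplicial object $E_1^{\bullet,1}$, use \Cref{hom} to get $E_1^{\bullet,1}\simeq V[-1]$, invoke the homotopy invariance of derived $\Lambda^n$ from \cite{Il71}, and conclude via the d\'ecalage formula $\Lambda^n(V[-1])\simeq \mathrm{Sym}^n(V)[-n]$. Your explicit mention of degreewise projectivity and the passage from quasi-isomorphism to homotopy equivalence are welcome clarifications, but the argument is the same.
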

\begin{proof}
Writing $H^1_{crys}(A)= V$ again, we note that by \Cref{hom}, $E_1^{\bullet, 1}$ is homotopy equivalent to $V[-1].$ Therefore, the (derived) $n$-th exterior power $\wedge^n (V [-1]) $ is homotopy equivalent to $\wedge^n (E_1^{\bullet, 1})$ by \cite[Def. 4.1.3.2]{Il71} and Lemma  4.1.3.5. from loc. cit. It also follows from Definition 4.1.3.2 and Section 1.3.4 in loc. cit. that $\wedge^n (E_1^{\bullet, 1})$ is homotopy equivalent to the alternating face complex associated to \[\bigwedge^n  \big(\cosimp {H^1_{crys}(*)}{H^1_{crys}(A)}{ H^1_{crys}(A \times A )} \big),\] which is isomorphic to $E_1^{\bullet,n}.$ Now one also notes that $\wedge^n (V[-1]) $ is homotopy equivalent to $\text{Sym}^n (V)[-n],$ by the formula of d\'ecalage (i) in Proposition 4.3.2.1. loc. cit. This proves the lemma.
\end{proof}{}

\begin{proposition}\label{cw}We have a natural isomorphism $H^{2*}_{crys}(BA) \cong \text{Sym}^* (H^1_{crys}(A)), $ and $H^i_{crys}(BA) = 0$ for odd $i$.

\end{proposition}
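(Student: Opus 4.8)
The plan is to feed the homotopy equivalences from \Cref{hom1} into the $E_1$ spectral sequence of \Cref{lol3} and read off the $E_2$-page. Recall from \Cref{lol3} that $E_1^{i,j} = H^j_{crys}(A^i) \Rightarrow H^{i+j}_{crys}(BA)$, and that by the K\"unneth formula together with the functorial isomorphism $H^*_{crys}(A) \cong \bigwedge^* H^1_{crys}(A)$, the row $E_1^{\bullet,j}$ is (the alternating face complex of) the cosimplicial object $\bigwedge^j\bigl(\text{cosimp of } H^1_{crys}(A^\bullet)\bigr)$. \Cref{hom1} says precisely that this complex is homotopy equivalent to $\mathrm{Sym}^j(H^1_{crys}(A))[-j]$; since homotopy-equivalent complexes have the same cohomology, we conclude
\[
E_2^{i,j} = \begin{cases} \mathrm{Sym}^j(H^1_{crys}(A)) & \text{if } i = j,\\ 0 & \text{otherwise.}\end{cases}
\]
(For $j=0$ the row is $R\Gamma_{crys}(*) \to R\Gamma_{crys}(A) \to \cdots$; since $H^0_{crys}(A^i) = W(k)$ and the coface maps in this cosimplicial abelian group are all identities — the structure maps of $BA$ preserve the base point — the associated complex is $W(k) \xrightarrow{0} W(k) \xrightarrow{\mathrm{id}} W(k) \xrightarrow{0}\cdots$, computing $W(k)$ in degree $0$, consistent with $\mathrm{Sym}^0$.)

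Next I would observe that the spectral sequence degenerates at $E_2$ for trivial support reasons: $E_2^{i,j}$ is concentrated on the single diagonal $i = j$, so for any fixed total degree $n = i+j$ there is at most one nonzero entry, namely $E_2^{n/2,n/2}$ when $n$ is even and nothing when $n$ is odd. All differentials $d_r : E_r^{i,j} \to E_r^{i+r, j-r+1}$ move between different diagonals (the target has $i+j$ increased by $1$), hence vanish, and there are no extension problems since each $H^n_{crys}(BA)$ has a one-step filtration. Therefore $H^{2m}_{crys}(BA) \cong E_2^{m,m} = \mathrm{Sym}^m(H^1_{crys}(A))$ and $H^{\mathrm{odd}}_{crys}(BA) = 0$, which is the claim. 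Naturality in $A$ follows because every step — the K\"unneth isomorphism, the identification $H^*_{crys}(A)\cong\bigwedge^*H^1_{crys}(A)$, the construction of the spectral sequence, and the d\'ecalage identity in \Cref{hom1} — is functorial.

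The main obstacle is really upstream of this proposition: it lies in \Cref{hom1}, i.e.\ in verifying that the explicit cosimplicial complex built from $\bigwedge^j$ of the base-point cosimplicial object is computed by the derived exterior power $\wedge^j$ in the sense of Dold--Puppe/Illusie, and then invoking d\'ecalage to pass from $\wedge^j(V[-1])$ to $\mathrm{Sym}^j(V)[-j]$. Once \Cref{hom1} is in hand, the present proposition is a short formal argument. One subtlety to check carefully is the $j=0$ row, where the K\"unneth description degenerates and one must argue directly (as above) that the cohomology is $W(k)$ in degree zero and nothing else; and one should confirm that the homotopy equivalences of \Cref{hom1}, being natural, assemble compatibly across all $j$ so that the resulting identification of $\bigoplus_m H^{2m}_{crys}(BA)$ with $\bigoplus_m \mathrm{Sym}^m H^1_{crys}(A)$ is one of graded rings, not merely of graded modules — though for the statement as phrased only the module/space-level isomorphism in each degree is needed.
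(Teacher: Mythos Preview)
Your proposal is correct and follows exactly the approach of the paper: use \Cref{hom1} to compute the rows of the $E_1$ spectral sequence of \Cref{lol3}, observe that the resulting $E_2$-page is concentrated on the diagonal $i=j$, and conclude. The paper's proof is a single sentence invoking the spectral sequence together with \Cref{hom} and \Cref{hom1}; you have simply written out the details (the $j=0$ row, the degeneration for support reasons, and naturality) that the paper leaves implicit.
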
{}

\begin{proof}
This follows from the spectral sequence, \Cref{hom} and \Cref{hom1}. The spectral sequence also guarantees the naturality of the isomorphisms.  
\end{proof}{}

\begin{corollary}For an abelian variety $A$ over $k$, $H^2_{crys}(BA)$ is naturally isomorphic to the Dieudonn\'e module associated to the $p$-divisible group $A[p^\infty].$ 

\end{corollary}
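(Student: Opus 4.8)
The plan is to deduce the corollary from \Cref{cw} together with \Cref{apt}. \Cref{cw} gives a natural isomorphism $H^2_{crys}(BA)\cong H^1_{crys}(A)$, and \Cref{apt} gives $H^2_{crys}(B(A[p^\infty]))\cong M(A[p^\infty])$, so it suffices to produce a natural isomorphism $H^2_{crys}(BA)\cong H^2_{crys}(B(A[p^\infty]))$. The closed immersions $A[p^n]\hookrightarrow A$ are compatible in $n$ and induce a morphism of stacks $\iota\colon B(A[p^\infty])=\varinjlim_n B(A[p^n])\to BA$, and I would show that the induced map $\iota^*\colon H^2_{crys}(BA)\to H^2_{crys}(B(A[p^\infty]))$ is an isomorphism. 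Under \Cref{cw} this is precisely the classical comparison between the first crystalline cohomology of an abelian variety and the contravariant Dieudonn\'e module of its $p$-divisible group, so a reader content to invoke that can stop here; the point of the argument below is that it is forced by the machinery already developed.

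To prove that $\iota^*$ is an isomorphism I would argue at the level of complexes. By \Cref{lol2}, $R\Gamma_{crys}(BA)\simeq R\w{Hom}_{D(k/W(k))}(\mathbb{Z}[B\underline{A}],\mathcal{O}^{crys})$, and by \Cref{p'} combined with \Cref{lol2} at each finite level, $R\Gamma_{crys}(B(A[p^\infty]))\simeq R\varprojlim_n R\w{Hom}_{D(k/W(k))}(\mathbb{Z}[B\underline{A[p^n]}],\mathcal{O}^{crys})$, with $\iota^*$ induced by the maps $\mathbb{Z}[B\underline{A[p^n]}]\to\mathbb{Z}[B\underline{A}]$. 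Write $\underline{A}[p^\infty]=\varinjlim_n\underline{A[p^n]}$ for the $p$-power torsion subsheaf of $\underline{A}$; since $A$ is an abelian variety, multiplication by $p$ is an fppf-epimorphism on $\underline{A}$, so the quotient $Q:=\underline{A}/\underline{A}[p^\infty]$ is uniquely $p$-divisible. Consequently each degree-$i$ group-homology sheaf of $Q$ with $i\ge 1$ is a sheaf of $\mathbb{Z}[1/p]$-modules (elementary, reducing via the K\"unneth formula and filtered colimits to cyclic groups), and since $\mathcal{O}^{crys}$ is $p$-adically complete ($R\mathcal{O}^{crys}=R\varprojlim_n\mathcal{O}^{crys}/W_n$ with $\mathcal{O}^{crys}/W_n$ killed by $p^n$), the bar complex $\mathbb{Z}[BQ]$ satisfies $R\w{Hom}_{D(k/W(k))}(\mathbb{Z}[BQ],\mathcal{O}^{crys})\simeq R\Gamma_{crys}(\w{Spec}\,k)$. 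Feeding this into the Hochschild--Serre spectral sequence attached to the extension $\underline{A}[p^\infty]\to\underline{A}\to Q$ shows that $\mathbb{Z}[B\underline{A}[p^\infty]]\to\mathbb{Z}[B\underline{A}]$ becomes an isomorphism after applying $R\w{Hom}_{D(k/W(k))}(-,\mathcal{O}^{crys})$, hence $\iota^*$ is an isomorphism. Combining with \Cref{cw} and \Cref{apt} yields the desired natural isomorphism $H^2_{crys}(BA)\cong M(A[p^\infty])$, which by \Cref{p} is the Dieudonn\'e module of $A[p^\infty]$.

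I expect the main obstacle to be exactly this comparison: establishing $R\w{Hom}_{D(k/W(k))}(\mathbb{Z}[BQ],\mathcal{O}^{crys})\simeq R\Gamma_{crys}(\w{Spec}\,k)$ for a uniquely $p$-divisible $Q$ and propagating it through the Hochschild--Serre spectral sequence requires some care with the bar construction and with the $p$-completeness of $\mathcal{O}^{crys}$, and it is really a reincarnation of crystalline Dieudonn\'e theory for abelian schemes (cf.\ \cite{BBM}). A minor secondary point is keeping track of the Frobenius twist, so that the final identification matches \Cref{apt} on the nose.
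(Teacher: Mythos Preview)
Your argument is correct in outline, but it takes a longer route than the paper does for this corollary. The paper's proof is a single line: by \Cref{cw} one has $H^2_{crys}(BA)\cong H^1_{crys}(A)$, and the identification of $H^1_{crys}(A)$ with the Dieudonn\'e module of $A[p^\infty]$ is the classical crystalline comparison (e.g.\ \cite{BBM}). You yourself note that a reader ``content to invoke that can stop here''; the paper does exactly that.

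What is interesting is that the more elaborate argument you sketch---showing $\iota^*\colon R\Gamma_{crys}(BA)\to R\Gamma_{crys}(B(A[p^\infty]))$ is an equivalence by analysing the uniquely $p$-divisible quotient $Q=\underline{A}/\underline{A}[p^\infty]$ via a Hochschild--Serre argument---is essentially what the paper carries out \emph{later}, in \cref{camera}, \cref{tv}, \cref{woman} and \cref{person}, in order to compute the entire ring $H^*_{crys}(B(A[p^\infty]))$. So your extra work is not wasted; it just belongs to the next subsection rather than to this corollary. Two technical remarks on your execution: (i) the paper runs this argument in the syntomic topos $\w{Shv}(\w{SYNSch}_k)$ with coefficients $\mathcal{O}^{crys}/W_n$ rather than in the crystalline topos, which makes the surjectivity of $[p]$ on $\underline{A}$ and the $p^n$-torsion of the coefficient sheaf transparent and sidesteps the $p$-completeness issues you flag; (ii) the vanishing you need for $Q$ is cleaner phrased as $H_i(Q,k)=0$ for $i>0$ (this is \cref{camera}) rather than as a $\mathbb{Z}[1/p]$-module statement, since one then gets $W_n(k)[B\underline{A}[p^\infty]]\simeq W_n(k)[B\underline{A}]$ directly by d\'evissage (\cref{tv}, \cref{woman}) without worrying about unbounded $p$-torsion in $\mathcal{O}^{crys}$.
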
{}

\begin{proof}
This follows from \Cref{cw} and the fact that $H^1_{crys}(A)$ is isomorphic to the Dieudonn\'e module associated to the $p$-divisible group $A[p^\infty].$
\end{proof}{}

\subsection{Cohomology of the classifying stack of $p$-divisible groups}\label{man}
Let $G$ be a $p$-disivisible group over a perfect field $k.$ Using the calculation in \cref{ab}, we are able to fully compute the cohomology ring $H^* _{crys}(BG).$ In this section, we prove that $H^*_{crys}(BG) \simeq \w{Sym}^{2*} (M(G)),$ where $M(G)$ is the Dieudonn\'e module of $G.$ Our strategy is to compare the $p$-divisible group with an abelian variety and deduce the result from \cref{cw}. First, we will record two lemmas.

\begin{lemma}\label{camera}Let $G$ be a uniquely $p$-divisible abelian group. Then $H_i (G, k)=0$ for $i>0$ where $H_i(G,k)$ denotes the group homology with coefficients in the field $k$ equipped with trivial $G$ action.

\end{lemma}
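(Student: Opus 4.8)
The plan is to use the bar complex computing group homology $H_*(G,k)$ and exploit the fact that $G$ is uniquely $p$-divisible, so that multiplication by $p$ is an automorphism of $G$. Concretely, $H_i(G,k)$ is computed by the complex $k[G^{\bullet}]$ (the alternating face complex of the bar construction), which is a complex of $k$-vector spaces. The key observation is that the multiplication-by-$p$ map $[p]\colon G \to G$ induces on $H_i(G,k)$ both the identity (since $[p]$ is an automorphism inducing, via functoriality of group homology applied to a group automorphism, an automorphism whose behavior we can pin down) and, on the other hand, multiplication by $p^i$ on the degree-$i$ part after passing to the primitive/indecomposable pieces — more precisely, on the Pontryagin ring $H_*(G,k)$, the power map $[p]_*$ acts by $p^i$ in homological degree $i$ on a suitable associated graded. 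Since $k$ has characteristic $p$, multiplication by $p^i$ is zero for $i>0$, forcing $H_i(G,k)=0$.

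First I would reduce to the case where $G$ is finitely generated over $\ZZ[1/p]$, i.e. a $\QQ$-vector space of finite dimension, by writing $G$ as a filtered colimit of its finitely generated subgroups: unique $p$-divisibility passes to the colimit, each finitely generated uniquely $p$-divisible group is a finite-dimensional $\QQ$-vector space, and group homology commutes with filtered colimits (visible directly from the bar complex, exactly as in the proof of \Cref{lol1}). By the Künneth formula, $H_*(\QQ^{\oplus n}, k) \cong H_*(\QQ,k)^{\otimes n}$, so it suffices to treat $G = \QQ$.

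For $G = \QQ$, I would again use a filtered colimit: $\QQ = \varinjlim \frac{1}{n!}\ZZ$, each term isomorphic to $\ZZ$, with transition maps given by multiplication. Since $H_*(\ZZ, k) = H_*(S^1,k)$ is $k$ in degrees $0$ and $1$ and zero otherwise, the only thing to compute is the colimit of the maps $H_1(\ZZ,k) \to H_1(\ZZ,k)$ induced by the transition maps $\ZZ \xrightarrow{\times m} \ZZ$ (viewing the source copy of $\ZZ$ inside the target). Functoriality of $H_1(-;k) = (-)\otimes_{\ZZ} k$ on abelian groups shows this induced map is multiplication by $m$ on $H_1(\ZZ,k)\cong k$. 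Along the cofinal system of transition maps by multiplication by integers that are eventually divisible by $p$, these maps become zero in $k$, so $\varinjlim H_1 = 0$, and similarly $H_i(\QQ,k)=0$ for all $i>0$ while $H_0(\QQ,k)=k$.

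The main obstacle is making the degree-$i$ scaling argument clean in higher degrees without circularity; the colimit approach via $\QQ = \varinjlim \ZZ$ together with Künneth sidesteps this entirely by reducing every positive-degree computation to the single map $H_1(\ZZ,k)\xrightarrow{\times m} H_1(\ZZ,k)$, which is transparently multiplication by $m$. One should be slightly careful that the Künneth formula over a field introduces no Tor terms (automatic here) and that the identification $H_*(\QQ^{\oplus n},k)\cong H_*(\QQ,k)^{\otimes n}$ is compatible with filtered colimits, but both points are routine.
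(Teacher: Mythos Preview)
Your overall strategy---reduce via filtered colimits and K\"unneth to a rank-one case, then compute that case as a colimit of copies of $\ZZ$---is exactly the paper's approach, and your endgame computation is correct. However, your reduction step contains a genuine error: the claim that ``each finitely generated uniquely $p$-divisible group is a finite-dimensional $\mathbb{Q}$-vector space'' is false. A uniquely $p$-divisible abelian group is the same thing as a $\ZZ[1/p]$-module, and a finitely generated $\ZZ[1/p]$-module has the form $\ZZ[1/p]^r \oplus T$ with $T$ finite of order prime to $p$; neither $\ZZ[1/p]$ nor $\ZZ/\ell\ZZ$ (for a prime $\ell \neq p$) is a $\mathbb{Q}$-vector space. (If instead you meant subgroups finitely generated as abelian groups, those are typically not $p$-divisible at all, e.g.\ $\ZZ \subset \mathbb{Q}$, so the colimit would not be over uniquely $p$-divisible groups.)

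The fix is minor: write $G$ as a filtered colimit of its finitely generated $\ZZ[1/p]$-submodules, use K\"unneth together with the standard vanishing of $H_{>0}(T,k)$ when $|T|$ is prime to $p$ to reduce to $G = \ZZ[1/p]$, and then run your colimit argument with $\ZZ[1/p] = \varinjlim(\ZZ \xrightarrow{p} \ZZ \xrightarrow{p} \cdots)$ in place of $\mathbb{Q}$. The paper organizes the reduction slightly differently---it starts from a two-term free resolution $0 \to \ZZ^{\oplus I} \to \ZZ^{\oplus J} \to G \to 0$, takes the colimit over multiplication by $p$ to get a resolution by copies of $\ZZ[1/p]$, and applies the Hochschild--Serre spectral sequence to reduce to $\ZZ[1/p]^{\oplus I}$---but the rank-one endgame is identical to yours.
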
{}
\begin{proof}
Since $G$ is an abelian group there is an exact sequence $ 0 \to \mathbb{Z}^{\oplus I} \to \mathbb{Z}^{\oplus J} \to G \to 0.$ By taking colimits over multiplication by $p$, we obtain an exact sequence $0 \to \mathbb{Z}[1/p]^{\oplus I} \to \mathbb{Z}[1/p]^{\oplus J} \to G \to 0.$ Using the Hochschild Serre spectral sequence, we are reduced to checking the claim for $G = \mathbb Z[1/p]^{\oplus I}.$ By taking filtered colimits, we can assume that $I$ is finite. Then by Kunneth formula it is enough to check the claim for $\mathbb{Z}[1/p].$ By taking filtered colimits over multiplication by $p$, it follow from the fact that $H_1 (\mathbb{Z}, k)= k$ and $H_i (\mathbb{Z}, k)= 0$ for $i>0.$
\end{proof}{}

\begin{lemma}\label{tv}
Let $G$ be an abelian group such that multiplication by $p$ is surjective on $G.$ Then it follows that $W_n(k)[B G[p^\infty]] \to W_n(k)[BG]$ is an isomorphism in the derived category $D(W_n(k))$ of $W_n(k)$-modules. Here $G[p^\infty] := \varinjlim G[p^n].$
\end{lemma}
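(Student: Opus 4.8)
The plan is to reduce the statement to the vanishing result of Lemma \ref{camera} by exploiting the short exact sequence of abelian groups $0 \to G[p^\infty] \to G \to G' \to 0$, where $G' := G/G[p^\infty]$. The key point is that $G'$ is a \emph{uniquely} $p$-divisible abelian group: it is $p$-divisible because $G$ is, and $p$ is injective on $G'$ because any element of $G$ killed by a power of $p$ already lies in $G[p^\infty]$. So Lemma \ref{camera} applies to $G'$.

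First I would recall that for an abelian group $A$, the object $\mathbb{Z}[BA]$ computes group homology $H_*(A,\mathbb{Z})$, and hence $W_n(k)[BA] := \mathbb{Z}[BA] \otimes^{L}_{\mathbb{Z}} W_n(k)$ computes $H_*(A, W_n(k))$ by a universal coefficient / base change argument (this is the same $\mathbb{Z}[BA]$ construction used in the proof of Lemma \ref{lol1}, now tensored up to $W_n(k)$). Since $W_n(k)$ is a $W_n(\mathbb{F}_p) = \mathbb{Z}/p^n$-algebra, one has $W_n(k)[BA] \simeq (\mathbb{Z}/p^n)[BA] \otimes^{L}_{\mathbb{Z}/p^n} W_n(k)$, and by flatness considerations it suffices to understand the map on homology with $\mathbb{Z}/p^n$ (or even $k$, after a further d\'evissage) coefficients; alternatively one works directly over $W_n(k)$ throughout. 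The map in question, $W_n(k)[BG[p^\infty]] \to W_n(k)[BG]$, is the one induced by the inclusion $G[p^\infty] \hookrightarrow G$.

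The main step is to run the Hochschild--Serre (Lyndon--Hochschild--Serre) spectral sequence for the extension $0 \to G[p^\infty] \to G \to G' \to 0$, computing $H_*(G, W_n(k))$ in terms of $H_*(G', H_*(G[p^\infty], W_n(k)))$. Because $G'$ is uniquely $p$-divisible and the coefficient module $W_n(k)$ (and each $H_i(G[p^\infty], W_n(k))$, being a $W_n(k)$-module, hence a $\mathbb{Z}[1/p]$-module-free... more precisely a module on which $p$ is nilpotent) — here one must be slightly careful: Lemma \ref{camera} is stated for trivial coefficients $k$, so I would first reduce the coefficient module to $k$ by a filtration argument, or reprove the vanishing $H_i(G', W_n(k)) = 0$ for $i>0$ directly by the same colimit argument (writing $G'$ as a filtered colimit of groups $\mathbb{Z}[1/p]^{\oplus r}$ and using $H_i(\mathbb{Z}[1/p], W_n(k)) = 0$ for $i>0$, which follows from $\mathbb{Z}[1/p] = \varinjlim \mathbb{Z}$ along multiplication by $p$ and the fact that $p$ acts invertibly on the colimit but nilpotently on $W_n(k)$). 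Granting that $H_i(G', N) = 0$ for all $i > 0$ and every $W_n(k)$-module $N$, the Hochschild--Serre spectral sequence degenerates to an isomorphism $H_*(G[p^\infty], W_n(k)) \xrightarrow{\sim} H_*(G, W_n(k))$, and one checks this isomorphism is exactly the one induced by the inclusion $G[p^\infty] \hookrightarrow G$ (the edge map of the spectral sequence). This yields the claimed quasi-isomorphism $W_n(k)[BG[p^\infty]] \xrightarrow{\sim} W_n(k)[BG]$ in $D(W_n(k))$.

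The main obstacle I anticipate is the coefficient bookkeeping: Lemma \ref{camera} is phrased only for field coefficients $k$ with trivial action, whereas the Hochschild--Serre spectral sequence for $G$ forces coefficients in the (generally non-trivial as $G'$-modules — though here the conjugation action of $G'$ on $H_*(G[p^\infty], -)$ is trivial since $G$ is abelian) $W_n(k)$-modules $H_i(G[p^\infty], W_n(k))$. I would handle this either by promoting Lemma \ref{camera} to arbitrary $W_n(k)$-module coefficients via the explicit $\mathbb{Z}[1/p]^{\oplus r}$-colimit presentation of $G'$ (the argument goes through verbatim, since $p$ invertible on the group forces all positive-degree homology to be a module on which $p$ is invertible, hence zero when the coefficients form a $p^n$-torsion module), or by devissage along a resolution of $W_n(k)$ by sums of copies of $k$ and a limit argument. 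Everything else — the identification of $\mathbb{Z}[BA]$ with group homology, the $\otimes^L W_n(k)$ base change, and the edge-map identification — is routine.
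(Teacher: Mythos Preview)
Your proposal is correct and follows essentially the same route as the paper: form the short exact sequence with uniquely $p$-divisible quotient $G' = G/G[p^\infty]$ (the paper writes this as $Q = \varinjlim_p G$), and collapse the Hochschild--Serre spectral sequence using Lemma~\ref{camera}. The one place the paper is cleaner is that it immediately reduces modulo $p$ to $k$-coefficients at the outset (checking the quasi-isomorphism after $\otimes^L_{W_n(k)} k$), so that the coefficient modules $H_q(G[p^\infty],k)$ are $k$-vector spaces, hence direct sums of copies of $k$ with trivial $Q$-action, and Lemma~\ref{camera} applies on the nose; this is exactly your option~(b) and avoids the bookkeeping you flag.
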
{}

\begin{proof}
By going modulo $p$, it would be enough to show that $k [B G[p^\infty]] \to k[BG]$ is quasi-isomorphism. For that, it would be enough to prove that map induced on group homology $H_i (G[p^\infty], k) \to H_i (G, k)$ is an isomorphism. Here, in both cases group cohomology is taken with constant coefficients in $k.$ We have the following exact sequence 

$$0 \to G[p^\infty] \to G \to \varinjlim_{p} G \to 0.$$We will write $Q:=\varinjlim_{p} G.$ By the Hochschild Serre spectral sequence we have a spectral sequence $$E_2 = H_p (Q, H_q(G[p^\infty],k)) \implies H_{p+q}(G,k).$$ Since $G$ is abelian and $G$ acts trivially on $k$, it follows that $Q$ acts trivially on $H_q(G[p^\infty], k).$ Since $k$ is a field $H_q(G[p^\infty], k)$ is a direct sum of copies of $k$ and therefore, we are done by \cref{camera}.
\end{proof}{}

\begin{proposition}\label{person}Let $A$ be an abelian variety over $k.$ Let $A[p^\infty]$ be the associated $p$-divisible group. Then $H^*_{crys}(B(A[p^\infty])) \simeq \text{Sym}^{2*}(H^2 _{crys}(BA[p^\infty]))$.

\end{proposition}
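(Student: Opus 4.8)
The plan is to reduce the cohomology of $B(A[p^\infty])$ to that of $BA$, which has already been computed in \Cref{cw}. The key point is that both stacks have crystalline cohomology that can be computed via the \v{C}ech nerve of $\w{Spec}\,k \to B(-)$, i.e. via the objects $\mathbb{Z}[B\underline{A[p^\infty]}]$ and $\mathbb{Z}[B\underline{A}]$ in the crystalline topos (after base change to $W_n(k)$). So the strategy is: first show that the natural map $B\underline{A[p^\infty]} \to B\underline{A}$ (induced by the inclusion $A[p^\infty] \hookrightarrow A$ of group schemes, functorially on \v{C}ech nerves) induces an isomorphism on the relevant free abelian group objects after reduction mod $p^n$, and then conclude by applying $R\w{Hom}(-,\mathcal{O}^{crys})$ and passing to the limit over $n$.

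First I would make precise that $R\Gamma_{crys}(B(A[p^\infty])) \simeq R\w{Hom}_{D(k/W(k))}(\mathbb{Z}[B\underline{A[p^\infty]}],\mathcal{O}^{crys})$ and similarly $R\Gamma_{crys}(BA) \simeq R\w{Hom}_{D(k/W(k))}(\mathbb{Z}[B\underline{A}],\mathcal{O}^{crys})$; for the $p$-divisible group this uses \Cref{p'} together with the construction of \Cref{lol2}, and for the abelian variety it is the content of \Cref{lol2} directly. Next, the heart of the argument is \Cref{tv}: applied fibrewise over each object $(U,T)$ of the crystalline site and each $W_n(k)$, it shows that $W_n(k)[B\underline{A[p^\infty]}] \to W_n(k)[B\underline{A}]$ is an isomorphism in $D(k/W_n(k))$ — here one uses that $A[p^\infty](U) = (A(U))[p^\infty]$ and that multiplication by $p$ is surjective on $A(U)$ when $U$ is over the perfect field $k$ (or more carefully, one checks this on an affine cover and uses that $A$ is $p$-divisible as an fppf sheaf, so the relevant statement holds after the syntomic sheafification built into the definition of $\mathbb{Z}[B\underline{A}]$). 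Applying $R\w{Hom}_{D(k/W_n(k))}(-,\mathcal{O}^{crys})$ then gives $R\Gamma_{crys}(BA/W_n) \simeq R\Gamma_{crys}(B(A[p^\infty])/W_n)$ compatibly in $n$, and taking $R\varprojlim_n$ yields $R\Gamma_{crys}(BA) \simeq R\Gamma_{crys}(B(A[p^\infty]))$. Combined with \Cref{cw}, which gives $H^{2*}_{crys}(BA) \cong \w{Sym}^*(H^1_{crys}(A))$ and vanishing in odd degrees, and with the identification $H^1_{crys}(A) \cong H^2_{crys}(BA) \cong H^2_{crys}(B(A[p^\infty]))$, this gives exactly the claimed formula $H^*_{crys}(B(A[p^\infty])) \simeq \w{Sym}^{2*}(H^2_{crys}(BA[p^\infty]))$.

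The main obstacle I anticipate is the passage from the abstract algebraic statement of \Cref{tv} (a statement about ordinary abelian groups $G$ with $p$ surjective) to the needed statement about the sheaves $\underline{A[p^\infty]}$ and $\underline{A}$ on the crystalline site. One must be careful that $\mathbb{Z}[B\underline{A}]$ involves a sheafification, so the comparison cannot be checked naively on sections; rather one checks it on stalks, or reduces to checking that the map of presheaf complexes $\mathbb{Z}^{\w{pre}}[B\underline{A[p^\infty]}] \to \mathbb{Z}^{\w{pre}}[B\underline{A}]$ becomes an isomorphism after sheafification, using that $\underline{A[p^\infty]} \to \underline{A}$ has "cokernel sheaf" on which $p$ is invertible — precisely the situation of the short exact sequence $0 \to \underline{A}[p^\infty] \to \underline{A} \to \varinjlim_p \underline{A} \to 0$, to which the Hochschild–Serre argument of \Cref{tv} applies verbatim once one knows $H_i(-,k)$ can be computed sheaf-theoretically. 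A secondary bookkeeping point is the interchange of $R\varprojlim_n$ with $R\w{Hom}(-,\mathcal{O}^{crys})$, which is fine since $\mathcal{O}^{crys} = R\varprojlim_n \mathcal{O}^{crys}/W_n$ by \Cref{fonmes}.
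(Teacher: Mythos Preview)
Your proposal is correct and follows essentially the same route as the paper: one shows that the natural map $W_n(k)[B\underline{A[p^\infty]}] \to W_n(k)[B\underline{A}]$ is a quasi-isomorphism by checking on stalks and invoking \Cref{tv}, then takes the inverse limit over $n$ and applies \Cref{cw}. The only cosmetic difference is that the paper carries this out in the syntomic topos $\w{Shv}(\w{SYNSch}_k)$ rather than the crystalline topos, which makes the surjectivity of multiplication by $p$ on $\underline{A}$ immediate (since $[p]:A\to A$ is a syntomic cover) and sidesteps the sheafification issue you anticipated.
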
{}

\begin{proof}
We let $\mathcal{X}$ denote the topos $\text{Shv(SYNSch}_k)$. Given the abelian variety $A$, we can associate an object in $\mathcal{X}$ (by considering functor of points) which will also be denoted by $A.$ Similarly, by taking direct limits, we can associate an object in $\mathcal{X}$ corresponding to $A[p^\infty]$ which will again be denoted by $A[p^\infty].$ Since they are both group objects, we can consider the associated classifying objects as a simplicial object in $\mathcal{X}$ denoted respectively by $B A$ and $B A[p^\infty].$ One can also consider the free $W_n(k)$-module on these objects and take the associated alternating face complex which will be denoted by $W_n(k) [B A]$ and $W_n(k) [B A[p^\infty]]$ respectively which can both be viewed as objects in $D(W_n(k))_{\mathcal{X}}$; the derived category of $W_n(k)$-modules in $\mathcal{X}.$. We let $R \Gamma_{crys}(B A[p^\infty]/W_n):= R\text{Hom}_{D(W_n(k))_{\mathcal{X}}}(W_n(k)[B A[p^\infty]], \mathcal{O}^{crys}/W_n)$ and $R \Gamma_{crys}(BA/W_n) := R\text{Hom}_{D(W_n(k))_{\mathcal{X}}}(W_n(k)[B A], \mathcal{O}^{crys}/W_n).$

\begin{lemma}\label{woman}There is a natural isomorphism $W_n(k)[B A[p^\infty]] \simeq W_n(k) [B A]$ in $D(W_n(k))_{\mathcal{X}}.$ Thus $$R \Gamma _{crys} (B A[p^\infty]/W_n) \simeq R \Gamma_{crys} (BA/W_n).$$

\end{lemma}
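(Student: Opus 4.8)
The plan is to reduce \Cref{woman} to a statement about simplicial abelian group objects in the topos $\mathcal{X} = \mathrm{Shv}(\mathrm{SYNSch}_k)$ and then to \Cref{tv}, which is the sheafified, derived-category incarnation of exactly the same fact. First I would observe that both $W_n(k)[BA[p^\infty]]$ and $W_n(k)[BA]$ are the alternating face complexes of simplicial objects obtained by applying the free $W_n(k)$-module functor levelwise to the bar constructions $BA[p^\infty]_\bullet$ and $BA_\bullet$; the natural inclusion $A[p^\infty] \hookrightarrow A$ of group objects in $\mathcal{X}$ induces a map of simplicial objects $BA[p^\infty]_\bullet \to BA_\bullet$ and hence a map $W_n(k)[BA[p^\infty]] \to W_n(k)[BA]$ in $D(W_n(k))_{\mathcal{X}}$, so the content is that this map is an isomorphism.

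The key point is that an isomorphism in $D(W_n(k))_{\mathcal{X}}$ can be checked on stalks, i.e.\ on sections over a conservative family of objects of the site; since $\mathcal{X}$ is the syntomic topos, it suffices to check over strictly henselian local rings (or, more crudely, to argue that a map in the derived category of sheaves is a quasi-isomorphism iff it is so after evaluating at each object of the site and sheafifying, and that $W_n(k)[B(-)]$ commutes with the relevant limits/colimits). Evaluating at a $k$-algebra $R$, the complex $W_n(k)[BA](R)$ computes the group homology $H_*(A(R), W_n(k))$ with trivial coefficients, and similarly $W_n(k)[BA[p^\infty]](R)$ computes $H_*(A[p^\infty](R), W_n(k))$. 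Now $A(R)$ is an abelian group on which multiplication by $p$ is surjective — this is precisely where one uses that $A$ is $p$-divisible as a group scheme, so that $A(R) \to A(R)$, $x \mapsto px$, is surjective for $R$ strictly henselian (or one reduces to such $R$) — and $A[p^\infty](R) = A(R)[p^\infty]$. Hence \Cref{tv} (applied with $G = A(R)$, after base-changing coefficients from $W_n(k)$-modules to the needed setting, or simply applying the mod-$p$ reduction argument in its proof verbatim) shows that $W_n(k)[A[p^\infty](R)\text{-bar}] \to W_n(k)[A(R)\text{-bar}]$ is a quasi-isomorphism for every such $R$. Passing to sheaves, the induced map on $D(W_n(k))_{\mathcal{X}}$ is an isomorphism.

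The second assertion is then formal: applying $R\mathrm{Hom}_{D(W_n(k))_{\mathcal{X}}}(-, \mathcal{O}^{crys}/W_n)$ to the isomorphism $W_n(k)[BA[p^\infty]] \simeq W_n(k)[BA]$ yields $R\Gamma_{crys}(BA[p^\infty]/W_n) \simeq R\Gamma_{crys}(BA/W_n)$ directly from the definitions of these complexes given just above the lemma.

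The main obstacle I anticipate is the bookkeeping needed to justify that a quasi-isomorphism can be tested stalkwise in this particular setting and that evaluating the free-module-on-the-bar-construction at an affine scheme really does compute group homology of the group of $R$-points — in particular making sure that the surjectivity of multiplication by $p$ on $A(R)$ holds for enough $R$ (which is where the $p$-divisibility of the group scheme $A[p^\infty]$, equivalently the smoothness/surjectivity of $[p]\colon A\to A$, enters) so that \Cref{tv} applies. Once that reduction is set up cleanly, the rest is an immediate appeal to \Cref{tv} and the definitions.
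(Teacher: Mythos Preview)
Your proposal is correct and follows essentially the same route as the paper: construct the comparison map from the inclusion $A[p^\infty]\hookrightarrow A$, reduce to checking the quasi-isomorphism on stalks (the paper phrases this as ``the topos has enough points''), use that $[p]\colon A\to A$ is a syntomic cover so multiplication by $p$ is surjective on $A$ as a sheaf (hence on stalks), and then invoke \Cref{tv}. The only cosmetic difference is that the paper works directly with geometric points of the topos rather than strictly henselian local rings, and uses that taking stalks commutes with the free $W_n(k)$-module functor to reduce cleanly to \Cref{tv}; your parenthetical about evaluating at arbitrary objects of the site is not quite right (sheafification intervenes), but your main line via stalks is exactly what is needed.
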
{}

\begin{proof}
Since multiplication by $p$ map on the abelian variety $A$ is a syntomic cover, it follows that as an abelian group object of $\mathcal{X},$ multiplication by $p$ is surjective on $A.$ Using the map $A[p^\infty] \to A$, we obtain a natural map $W_n(k)[BA[p^\infty]] \to W_n(k)[BA]$ and in order to prove that this map is a quasi-isomorphism, we need to check that $H^i (W_n(k)[BA[p^\infty]]) \to H^i (W_n(k)[BA])$ is isomorphism. Since the topos has enough points, it is enough to check this by taking stalks at a (geometric) point $x: \text{Sets} \to \mathcal{X}$. But since taking stalk is an exact functor, it commutes with taking cohomology. Therefore, we can take stalks levelwise on the complexes associated to $W_n(k)[B A[p^\infty]]$ and $W_n(k)[BA]$ and then check that it is a quasi-isomorphism. Again, by noting that taking stalks commutes with the free $W_n(k)$-module object construction, we are reduced to proving that there is a natural quasi-isomorphism $W_n(k)[BA_x[p^\infty]] \to W_n(k)[B A_x]$ of complexes of $W_n(k)$-modules. This follows from \cref{tv}.
\end{proof}{}
By taking inverse limits over $n$, we obtain $R \Gamma_{crys}(B A[p^\infty]) \simeq R \Gamma_{crys}(BA).$ Now \cref{person} follows from the fact that $H^*_{crys}(BA)$ is a symmetric algebra in $H^2_{crys}(BA)$ by \cref{cw}. \end{proof}{}

\begin{proposition}\label{hislast}Let $G$ be any $p$-divisible group over $k$. Then $H^*_{crys} (BG)= \text{Sym}^{2*} (H^2_{crys}(BG)).$
\end{proposition}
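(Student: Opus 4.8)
The plan is to reduce the statement for an arbitrary $p$-divisible group $G$ to the case $G = A[p^\infty]$ for an abelian variety $A$, which is already handled by \Cref{person}. The key classical input is that over a perfect field $k$, every $p$-divisible group $G$ can be embedded into (the $p$-divisible group of) an abelian variety; more precisely, there is an abelian variety $A$ and an isogeny-type relation allowing one to realize $G$ as a direct factor, up to isogeny, of some $A[p^\infty]$. In fact the cleanest route is: by the theory of Dieudonn\'e modules (\Cref{p}), $G$ is determined by a finite free $W(k)$-module with a Frobenius $F$ satisfying $pM \subseteq F(M) \subseteq M$; one can always find an abelian variety $A$ whose $p$-divisible group $A[p^\infty]$ contains $G$ as a sub-$p$-divisible group with $p$-divisible quotient, i.e.\ there is a short exact sequence $0 \to G \to A[p^\infty] \to G' \to 0$ of $p$-divisible groups over $k$.

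First I would set up, exactly as in the proof of \Cref{person}, the topos $\mathcal{X} = \mathrm{Shv}(\mathrm{SYNSch}_k)$ and work with the objects $W_n(k)[BG]$, $W_n(k)[BA[p^\infty]]$, $W_n(k)[BG']$ in $D(W_n(k))_{\mathcal X}$. From the short exact sequence $0 \to G \to A[p^\infty] \to G' \to 0$ one gets, by the Hochschild--Serre/bar-construction argument used in \Cref{tv} and \Cref{woman}, that after taking stalks at a geometric point the complex $W_n(k)[BA[p^\infty]_x]$ is computed by group homology of the extension $G_x \hookrightarrow A[p^\infty]_x \twoheadrightarrow G'_x$; so there is a ``fibration'' relationship expressing $R\Gamma_{crys}(BA[p^\infty])$ in terms of $R\Gamma_{crys}(BG)$ and $R\Gamma_{crys}(BG')$. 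Combined with \Cref{person}, which says $H^*_{crys}(BA[p^\infty]) \cong \mathrm{Sym}^{2*}(M(A[p^\infty]))$ and that it vanishes in odd degrees, one extracts that $H^*_{crys}(BG)$ is concentrated in even degrees and is a polynomial algebra on $H^2_{crys}(BG) = M(G)$ (using \Cref{apt} to identify the degree-$2$ part with the Dieudonn\'e module). The point is that a graded-commutative bialgebra-type summand of a symmetric algebra on a degree-$2$ class, which is again generated in degree $2$, must itself be a symmetric algebra; and $M(A[p^\infty]) = M(G) \oplus M(G')$ compatibly with the K\"unneth decomposition, so the symmetric algebra splits as $\mathrm{Sym}^{2*}(M(G)) \otimes \mathrm{Sym}^{2*}(M(G'))$, forcing each factor to have the asserted form.

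The main obstacle I anticipate is making the ``fibration/splitting'' step rigorous at the level of cohomology rings rather than merely additively: one needs that the extension $0\to G\to A[p^\infty]\to G'\to 0$ induces, via $B(-)$, a fiber sequence of stacks $BG \to B(A[p^\infty]) \to BG'$ and hence a multiplicative Serre-type spectral sequence, and then that this spectral sequence degenerates (which should follow because everything is concentrated in even degrees and free over $W(k)$, killing all differentials and extension problems). An alternative, possibly slicker, route that avoids choosing $A$ at all: run the simplicial/non-abelian-derived-functor machinery of \Cref{ab} directly for $G$, using the $E_1$ spectral sequence $E_1^{i,j} = H^j_{crys}(G^i) \Rightarrow H^{i+j}_{crys}(BG)$ together with the fact (to be established) that $H^*_{crys}$ of a $p$-divisible group's iterated self-products still obeys a K\"unneth formula and that $H^*_{crys}(G)$ — interpreted suitably on the pro-object $\{G_n\}$ — carries an exterior-algebra structure on a degree-$1$ piece isomorphic to $M(G)$; then \Cref{hom} and \Cref{hom1} apply verbatim with $V = M(G)$, and the d\'ecalage identity $\wedge^n(V[-1]) \simeq \mathrm{Sym}^n(V)[-n]$ gives $H^{2*}_{crys}(BG) \cong \mathrm{Sym}^*(M(G))$ with vanishing in odd degrees. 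I would likely present the reduction-to-abelian-varieties argument as the main proof and remark on the direct approach, since the abelian variety case has already been proved in full and the K\"unneth/exterior-algebra facts for $p$-divisible groups in isolation require extra justification.
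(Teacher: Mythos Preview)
Your overall strategy---reduce to the case $A[p^\infty]$ already handled in \Cref{person}---is exactly the paper's, and your final sentence (the K\"unneth splitting $\mathrm{Sym}^{2*}M(A[p^\infty]) \simeq \mathrm{Sym}^{2*}M(G)\otimes\mathrm{Sym}^{2*}M(G')$ forcing each factor to be a symmetric algebra) is essentially the paper's endgame. But there is a real gap in how you get there.

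You only produce a short exact sequence $0\to G\to A[p^\infty]\to G'\to 0$, whereas the paper invokes Oort \cite{Oor01} to obtain a genuine \emph{direct product} decomposition $G\times G'\simeq A[p^\infty]$. This distinction matters twice. First, with only an exact sequence your proposed Serre spectral sequence argument is circular: you justify degeneration by saying ``everything is concentrated in even degrees and free over $W(k)$,'' but that is precisely the statement you are trying to prove for $H^*_{crys}(BG)$ and $H^*_{crys}(BG')$. Second, your K\"unneth paragraph silently assumes $B(A[p^\infty])\simeq BG\times BG'$, which only holds when the sequence splits; otherwise there is no K\"unneth decomposition to invoke, and $M(A[p^\infty])=M(G)\oplus M(G')$ as $W(k)$-modules need not be compatible with any tensor factorization of the cohomology ring.

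Once you have the split decomposition, the argument is short and does not need any spectral sequence: the zero section of $G'$ makes $BG$ a retract of $BA[p^\infty]$, so $H^*_{crys}(BG)$ is a retract of an algebra generated in degree $2$, hence is itself generated in degree $2$; this gives a surjection $\mathrm{Sym}^{2*}H^2_{crys}(BG)\twoheadrightarrow H^*_{crys}(BG)$, and likewise for $G'$. Tensoring the two surjections and comparing with the isomorphism $\mathrm{Sym}^{2*}H^2_{crys}(B(G\times G'))\simeq H^*_{crys}(B(G\times G'))$ from \Cref{person} forces each surjection to be an isomorphism. That is the paper's proof. Your fibration and direct-$E_1$ approaches are not needed.
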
{}

\begin{proof}
By \cite{Oor01}, there exists a $p$-divisible group $G'$ such that $G \times G' \simeq A[p^\infty]$ for some abelian variety $A.$ Using the zero section of $G'$ we have a map $G \to G \times G'$ which when composed with projection to $G$ gives the identity map. Thus $BG$ is a retract of $BA[p^\infty].$ Since $H^*_{crys}(BA[p^\infty])$ is generated as a (commutative) symmetric algebra in degree $2$ classes, it follows that $H^*_{crys}BG$ is generated by degree $2$ classes, i.e., there is a surjection $\text{Sym}^{2*} (H^2 _{crys}(BG)) \to H^*_{crys}(BG).$ Doing the same for $G'$ we arrive at the following commutative diagram.

\begin{center}
\begin{tikzcd}
\text{Sym}^{2^*}H^2_{crys}(BG)\otimes \text{Sym}^{2^*}H^2_{crys}(BG') \arrow[d, two heads] & {} \arrow[rr, "\simeq"] &  & {} & \text{Sym}^{2*}(H^2_{crys}(BG \times BG')) \arrow[d, "\simeq"] \\
H^*_{crys}(BG)\otimes H^*_{crys}(BG')                                                      & {} \arrow[rr]           &  & {} & H^*_{crys}(BG \times BG')                              
\end{tikzcd}
\end{center}{}
The upper horizontal map is an isomorphism since $H^2_{crys}(BG)$ is isomorphic to the Dieudonn\'e module (\cref{apt}) and therefore, $H^2_{crys}(BG \times BG') \simeq H^2_{crys}(BG) \oplus H^2_{crys}(BG').$ By using \cref{person}, the right vertical map is an isomorphism since $BG \times BG' \simeq B A[p^\infty]$. This shows that the surjection $\text{Sym}^{2*} (H^2 _{crys}(BG)) \to H^*_{crys}(BG)$ must be an isomorphism. \end{proof}{}

\begin{proof}[Proof of \cref{alvida}]Follows from \cref{hislast} and \cref{apt}.
\end{proof}

\newpage

\section{Prismatic cohomology}\label{sec4}
\subsection{Prismatic cohomology for stacks} In this section, we start by recalling the notion of prismatic cohomology and then extend the notion of prismatic cohomology to stacks. The main reference for this section is \cite{BS19} and \cite{BMS19}. We will freely use the definitions and notations from their paper. Our somewhat terse exposition here is also loosely based on \cite{AL19}.

\begin{definition}{\cite[Def. 4.10.]{BMS19}} A ring $S$ is called \textit{quasi-syntomic} if $S$ is $p$-complete with bounded $p^\infty$-torsion and the $p$-adically completed cotangent complex $\widehat{\mathbb{L}_{S/\mathbb{Z}_p}}$ has Tor-amplitude in $[-1,0].$ The category of all quasi-syntomic rings is denoted by $\mathbf{Qsyn}.$ A map $S \to S'$ of $p$-complete rings with bounded $p^\infty$-torsion is a \textit{quasi-syntomic morphism} if $S'$ is $p$-completely flat over $S$ and $\widehat{\mathbb{L}_{S'/S}}$ has Tor-amplitutde in $[-1,0].$ A quasi-syntomic morphism is called a \textit{quasi-syntomic cover} if the map $S \to S'$ is $p$-completely faithfully flat i.e., $S/p^n \to S'/p^n$ is faithfully flat for all $n \ge 1.$

\end{definition}{}

\begin{definition}{\cite[Def. 4.18.]{BMS19}} A ring $S$ is called (integral) \textit{perfectoid} if $S$ is $p$-complete, such that $\pi^p = p u$ for some $\pi \in S$, $u \in S ^\times$; the Frobenius is surjective on $S/p$ and kernel of the map $\theta: A_{\w{inf}}(S) := W(S^\flat) \to S$ is principal.

\end{definition}{}

\begin{definition}{\cite[Def. 4.20.]{BMS19}} A ring $S$ is called \w{quasi-regular semiperfectoid}(QRSP) if $S \in \mathbf{Qsyn}$ and there exists a perfectoid ring $R$ mapping surjectively onto $R.$

\end{definition}{}

\begin{definition} If $R$ is any $p$-complete ring we will let $(R)_{\mathbf{Qsyn}}$ denote the (opposite) category of all rings over $R$ which are quasi-syntomic. This category can be equipped with a Grothendieck topology generated by quasi-syntomic covers which makes this a site.

\end{definition}{}

\begin{remark}\label{qr} By the results in \cite[Sec. 4.4.]{BMS19}, QRSP rings form a basis for the site $(R)_{\mathbf{Qsyn}}.$ Therefore, in order to specify a sheaf on $(R)_{\mathbf{Qsyn}}$ amounts to assigning values to the QRSP rings satisfying the descent condition.

\end{remark}

In \cite{BS19} the authors define prismatic cohomology at first for $p$-complete smooth algebras and then extend it to all $p$-complete algebras \cite[Construction 7.6.]{BS19} and call it the \textit{derived prismatic cohomology}. An important fact \cite[Prop. 7.10.]{BS19} about this construction is that for every QRSP ring $S$, the derived prismatic cohomology denoted as $\Prism_S$ lives only in degree 0, i.e., is discrete. Also, the functor $S \to \Prism_S$ forms a quasi-syntomic sheaf \cite[Construction 7.6. (3)]{BS19}. Using these facts the authors in \cite{AL19} construct a sheaf of rings $\mathcal{O}^{\w{pris}}$ on $(R)_{\mathbf{Qsyn}}$ such that for any QRSP ring $S$ over $R$, one has $$\Prism_S \simeq R \Gamma (S, \mathcal{O}^{\w{pris}}). $$

Further, we note that for any prism $(A,I)$, there is a decreasing filtration $\mathcal{N}^{\ge \bullet} A$ called the Nygaard filtration which is defined as 

$$ \mathcal{N}^{\ge i}(A):= \varphi^{-1} (I^i).$$
This equips the sheaf of rings $\mathcal{O}^{\w{pris}}$ with a decreasing filtration $\mathcal{N}^{\ge \bullet} \mathcal{O}^{\w{pris}}$ of sheaves. \\

\begin{remark}
We explain the construction of the Nygaard filtration in detail. Using \cite[Cor. 3.3.10]{AL19}, we have a morphism of topoi:

$$v: \w{Shv}((R)_{\Prism}) \to \w{Shv}((R)_{\textbf{Qsyn}}), $$ where $(R)_{\Prism}$ denotes the absolute prismatic site as in Def. 3.1.4 loc. sit. On $(R)_{\Prism}$ one defines a sheaf of rings $\mathcal{O}_{\Prism}$ by sending a prism $(A,I) \to A.$ Using the notion of Nygaard filtration on a prism one can equip this sheaf of rings $\mathcal{O}_{\Prism}$ with a Nygaard filtration $\mathcal{N}^{\ge \bullet} \mathcal{O}_{\Prism}$ which sends a prism $(A,I) \to \mathcal{N}^{\ge \bullet} A \subseteq A.$ In order to prove that the presheaf $(A,I) \to \mathcal{N}^{\ge \bullet} A$ is indeed a sheaf, we note that $\mathcal{N}^{\ge i}(A)$ is the kernel of a map of presheaf of rings $((A,I) \to A ) \to ((A,I) \to A/I^i)$ obtained by composing $\varphi: A \to A$ with the projection map $A \to A/I^i.$ Therefore it will be enough to show that the presheaf $(A,I) \to A/I^i$ is a sheaf. This follows from the proof of \cite[Cor. 3.12.]{BS19} by noting that for a map $(A,I) \to (B,J)$ of prisms one has $I^i B = J^i$ by Lemma 3.5. loc. cit.     One defines $$\mathcal{N}^{\ge \bullet} \mathcal{O}^{\w{pris}}:= v_*\mathcal{N}^{\ge \bullet} \mathcal{O}_{\Prism} .$$
By \cite[Prop. 7.2.]{BS19} and the above definition, it follows that $\mathcal{N}^{\ge \bullet} \mathcal{O}^{\w{pris}}(S) \simeq \mathcal{N}^{\ge \bullet} \Prism_S$ for a QRSP ring $S \in (R)_{\textbf{Qsyn}}.$ In fact, by \cref{qr} this description can be used to define the sheaves $\mathcal{N}^{\ge \bullet}\mathcal{O}^{\w{pris}}$ after one proves that it forms a sheaf on the basis objects. However, a priori it is not obvious why this description has to define a sheaf on the basis objects. Actually, more is true: the functor that sends a QRSP algebra $S \to \mathcal{N}^{\ge \bullet}\Prism_S$ forms a sheaf with vanishing higher cohomology, i.e., $H^i (S, \mathcal{N}^{\ge \bullet} \mathcal{O}^{\w{pris}}) = 0$ for $i \ge 1$ and a QRSP algebra $S.$ This fact follows from \cite[Thm. 12.2.]{BS19} and \cite[Thm. 3.1.]{BMS19}

\end{remark}{}

\begin{definition}\label{c1}We call $\mathcal{Y}$ a quasi-syntomic stack over $R$ if it is a stack with respect to the site $R_{\mathbf{Qsyn}}.$

\end{definition}{}

\begin{remark}If $A \in R_{\textbf{Qsyn}}$ then by fpqc descent it follows that $h_{A}$ is a sheaf with respect to $R_{\textbf{Qsyn}}.$ If $\mathcal{Y}$ is a quasi-syntomic stack, an arrow $h_A \to \mathcal{Y}$ will be denoted as $\w{Spf}\,A \to \mathcal{Y}.$

\end{remark}{}

\begin{remark}\label{c4}Let $X$ be a $p$-adic formal scheme over $R.$ Then setting $\underline{X}(A):= \w{Hom}_{\w{formal sch}}(\w{Spf}\, A, X)$ defines a quasi-syntomic sheaf on $R_{\mathbf{Qsyn}}$ which we will denote by $\underline{X}.$

\end{remark}{}

\begin{definition}\label{keno1}Let $\mathcal{Y}$ be a quasi-syntomic stack. We define the \textit{prismatic cohomology of the quasi-syntomic stack} $\mathcal{Y}$ to be 

$$R \Gamma_{\Prism} (\mathcal{Y}) := R\varprojlim_{\w{Spf}\,A \to \mathcal{Y} } \Prism_A,$$ where $A \in R_{\textbf{Qsyn}}$ and the derived limit is taken in the derived category of abelian groups.

\end{definition}{}

\begin{remark}\label{c2}
Let $\mathcal{Y}$ be a quasi-syntomic stack. We give an alternative description of $R \Gamma_{\Prism}(\mathcal{Y}).$ One can define the quasi-syntomic site of $\mathcal{Y}$ denoted as $\mathcal{Y}_{\textbf{Qsyn}}$ to be the category whose objects are $\w{Spf}\, A \to \mathcal{Y}$ for $A \in R_{\textbf{Qsyn}}$ equipped with the quasi-syntomic covers, i.e., we let $\mathcal{Y}_{\textbf{Qsyn}}:= R_{{\textbf{Qsyn}}_{/ \mathcal{Y}}}.$ Then by the discussion preceding \cref{c1}, we obtain a sheaf of rings $\mathcal{O}^{\w{pris}}$ on $\mathcal{Y}_{\textbf{Qsyn}}.$ Then it follows that $R\Gamma_{\Prism}(\mathcal{Y}) \simeq R \Gamma (\mathcal{Y}, \mathcal{O}^{\w{pris}}).$
\end{remark}
\begin{definition}\label{nag}Let $\mathcal{Y}$ be a quasi-syntomic stack. The \textit{Nygaard filtration} on prismatic cohomology $R\Gamma_{\Prism}(\mathcal{Y})$ is defined to be 

$$\mathcal{N}^{\ge \bullet}R \Gamma_{\Prism}(\mathcal{Y}):= R \Gamma(\mathcal{Y}, \mathcal{N}^{\ge \bullet} \mathcal{O}^{\w{pris}}). $$

\end{definition}{}

\begin{definition}
Now we consider an algebraic stack $\mathcal{X}$ over $R.$ We will define the $p$-adic completion of $\mathcal{X}$ as a quasi-syntomic stack which will be denoted as $\widehat{\mathcal{X}}.$ We define $$\widehat{\mathcal{X}}(A):= \varprojlim \mathcal{X}(A/p^n),$$ where $A \in R_{\textbf{Qsyn}}.$
This defines a sheaf of groupoids in the site ${R}_{\textbf{Qsyn}}.$ Indeed, $\widehat{\mathcal{X}}$ by definition is an inverse limit of the sheaves on ${R}_{\textbf{Qsyn}}$ which sends $A \to \mathcal{X}(A/p^n)$ and therefore has to be a sheaf. 



\end{definition}

\begin{definition}We define the prismatic cohomology of a stack $\mathcal{X}$ to be 

$$R \Gamma_{\Prism}(\mathcal{X}):= R\Gamma_{\Prism}(\widehat{\mathcal X}_{\textbf{}}). $$

\end{definition}{}

\begin{proposition}\label{pri1}If $X$ is a $p$-adic formal scheme over $R$, then $R \Gamma_{\Prism} (X) \simeq R \text{Hom}_{D(R_{\textbf{Qsyn}})}(\mathbb{Z}[\underline{X}], \mathcal{O}^{\text{pris}}),$ where $\underline{X}$ is the quasi-syntomic sheaf associated to $X$ on $(R)_{\textbf{Qsyn}}.$

\end{proposition}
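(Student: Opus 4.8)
The plan is to reduce, via the alternative description of prismatic cohomology from \cref{c2}, to a purely site-theoretic computation of cohomology on a localized site, exactly as the crystalline statement \cref{why} reduces to a computation in the crystalline topos. First I would observe that $\underline{X}$, as defined in \cref{c4}, is a quasi-syntomic sheaf on $(R)_{\textbf{Qsyn}}$, so by \cref{keno1} and the alternative description in \cref{c2} one has
\[
R\Gamma_{\Prism}(X)\simeq R\Gamma\big(\underline{X}_{\textbf{Qsyn}},\,\mathcal{O}^{\w{pris}}\big),
\]
where $\underline{X}_{\textbf{Qsyn}}=(R)_{\textbf{Qsyn}/\underline{X}}$ is the slice site and $\mathcal{O}^{\w{pris}}$ on it is the restriction of the structure sheaf on $(R)_{\textbf{Qsyn}}$.

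Next I would invoke the localization morphism of topoi $j\colon \w{Shv}\big((R)_{\textbf{Qsyn}/\underline{X}}\big)\to \w{Shv}\big((R)_{\textbf{Qsyn}}\big)$ attached to the sheaf $\underline{X}$. It sits in an adjoint triple $j_!\dashv j^{-1}\dashv j_*$ in which $j^{-1}$ is restriction along the localization functor and, on abelian sheaves, $j_!$ is exact with $j_!\underline{\mathbb{Z}}\cong \mathbb{Z}[\underline{X}]$, the free abelian sheaf on $\underline{X}$ (\cite{SP}). Since $j_!$ is exact, $j^{-1}$ preserves injective abelian sheaves, so deriving the adjunction identity $\Gamma\big((R)_{\textbf{Qsyn}/\underline{X}},\,j^{-1}(-)\big)=\w{Hom}_{(R)_{\textbf{Qsyn}}}\big(j_!\underline{\mathbb{Z}},\,-\big)$ applied to $\mathcal{O}^{\w{pris}}$ gives
\[
R\Gamma\big((R)_{\textbf{Qsyn}/\underline{X}},\,j^{-1}\mathcal{O}^{\w{pris}}\big)\simeq R\w{Hom}_{D(R_{\textbf{Qsyn}})}\big(\mathbb{Z}[\underline{X}],\,\mathcal{O}^{\w{pris}}\big),
\]
which combined with the display in the first paragraph yields the claim. (Equivalently, one can argue by quasi-syntomic—or even Zariski—\v{C}ech descent along an affine cover of $X$, as in the proof of \cref{lol2}, using the representable case $R\Gamma_{\Prism}(\w{Spf}\,A)\simeq \Prism_A\simeq R\w{Hom}(\mathbb{Z}[\underline{\w{Spf}\,A}],\mathcal{O}^{\w{pris}})$ for $A\in R_{\textbf{Qsyn}}$; the two routes are formally the same.)

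The only genuine inputs are the exactness of $j_!$ on abelian sheaves and the identification $j_!\underline{\mathbb{Z}}\cong \mathbb{Z}[\underline{X}]$, both standard facts about localization of topoi, so I do not expect a substantial obstacle; the only point requiring care is the bookkeeping in the first paragraph, namely identifying the slice site $\underline{X}_{\textbf{Qsyn}}$ with the localization $(R)_{\textbf{Qsyn}/\underline{X}}$ and matching the two incarnations of $\mathcal{O}^{\w{pris}}$, which is exactly what \cref{c2} provides.
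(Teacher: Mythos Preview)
Your argument is correct and is essentially the paper's own proof, just spelled out in detail: the paper's one-line justification ``follows from \cref{c4} and \cref{c2} by adjunction'' is precisely the localization adjunction $j_!\dashv j^{-1}$ with $j_!\underline{\mathbb{Z}}\cong\mathbb{Z}[\underline{X}]$ that you invoke.
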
 

\begin{proof} This follows from \cref{c4} and \cref{c2} by adjunction.
\end{proof}

\begin{remark}Instead of considering quasi-syntomic stacks, one could have also worked with ``$p$-adic formal stacks" which can be defined to be stacks with respect to the Grothendieck site on the category of $p$-complete and bounded $p^\infty$-torsion rings equipped with $p$-completely faithfully flat covers. Any $p$-adic formal stack can also be regarded as a quasi-syntomic stack. The reason we work with the notion of quasi-syntomic stacks is because the notion of prismatic cohomology in \cref{keno1} would ultimately regard a $p$-adic formal stack as a quasi-syntomic stack. Hence one might as well define prismatic cohomology for quasi-syntomic stacks.

\end{remark}{}


\subsection{Application to prismatic Dieudonn\'e theory}\label{secc4.2}

From now we will assume that the ring $R$ is a QRSP ring. Let $(\Prism_R,I)$ be the prism associated to $R$ by taking prismatic cohomology. We briefly recall some definitions from \cite{AL19} and review their theorem on classification of $p$-divisible groups in terms of filtered prismatic Dieudonn\'e modules.

\begin{definition}\cite[Def. 4.1.10.]{AL19} A filtered prismatic Dieudonn\'e module over $R$ is a collection $(M, \w{Fil} M, \varphi_M)$ consisting of a finite locally free $\Prism_R$-module $M$, a $\Prism_R$-submodule $\w{Fil} M$, and $\varphi$ linear map $\varphi_M:M \to M$ satisfying 

\noindent
(1) $\varphi_M(\w{Fil}M) \subset IM$ and  $\varphi_M(\w{Fil}M)$ generates $I M$ as a $\Prism_R$-module\\
\noindent
(2) $(\mathcal{N}^{\ge 1}\Prism_R) M \subset \w{Fil} M$ and $M/ \w{Fil} M$ is a finite locally free finite $R$-module.
\end{definition}{}

Now let $G$ be a $p$-divisible group. One makes the following definition.

\begin{definition}The quasi-syntomic sheaf $\underline{G} \in (R)_{\textbf{Qsyn}}$ associated to a $p$-divisible group $G$ is defined to be $\varinjlim \underline{G_n}$ where $\underline{G_n} \in (R)_{\textbf{Qsyn}}$ is the quasi-syntomic sheaf associated to $\text{Spf}\,G_n.$

\end{definition}

The authors in \cite{AL19} make the following definition which can be seen as a generalization of \cref{gen}.
\begin{definition}{\cite[Def. 4.2.8.]{AL19}} Let $G$ be a $p$-divisible group over $R.$ We define $$M_{\Prism}(G):=\w{Ext}^1_{R_{\mathbf{Qsyn}}}(\underline{G}, \mathcal{O}^{\w{pris}}), $$

$$\w{Fil}M_{\Prism}(G):= \w{Ext}^1_{R_{\mathbf{Qsyn}}}(\underline{G}, \mathcal{N}^{\ge 1 }\mathcal{O}^{\w{pris}}),$$
and $\varphi_{M_{\Prism}(G)}$ as the endomorphism induced by $\varphi$ on $\mathcal{O}^{\w{pris}}.$ Then $$\underline{M}_{\Prism}(G):=  ( M_{\Prism}(G),\w{Fil}M_{\Prism}(G),\varphi_{M_{\Prism}(G)}  ) $$ is called the \textit{filtered prismatic Dieudonn\'e module} of $G.$

\end{definition}{}

In this case, the main theorem of \cite{AL19} says the following.

\begin{theorem}The filtered prismatic Dieudonn\'e module functor $$G \to \underline{M}_{\Prism}(G) $$ defines an anti-equivalence between the category $p$-divisible groups over $R$ and the category of filtered prismatic Dieudonn\'e modules over $R.$

\end{theorem}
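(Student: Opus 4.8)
Since the statement is the classification theorem of \cite{AL19}, the plan is to recall the structure of their argument; as $R$ is assumed quasi-regular semiperfectoid throughout, one can work directly over $(R)_{\mathbf{Qsyn}}$. The assertion breaks into full faithfulness of $G \mapsto \underline{M}_{\Prism}(G)$ and essential surjectivity onto filtered prismatic Dieudonn\'e modules, and the common starting point is the perfectoid case. When $R$ is perfectoid one has $\Prism_R = A_{\w{inf}}(R) = W(R^\flat)$ with $\varphi$ the Witt-vector Frobenius, $I = \ker\theta$ principal, and $\mathcal{N}^{\ge 1}\Prism_R = \varphi^{-1}(I)$ again an invertible ideal with $\Prism_R/\mathcal{N}^{\ge 1}\Prism_R \cong R$; unwinding the definitions, a filtered prismatic Dieudonn\'e module over a perfectoid $R$ is exactly a minuscule Breuil--Kisin--Fargues module over $A_{\w{inf}}(R)$, so this case reduces to the classification of $p$-divisible groups over perfectoid rings of Scholze--Weinstein \cite{SW14} (in the contravariant form used in \cite{AL19}). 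One also checks that $M_{\Prism}(G)$ is the value at $(\Prism_R,I)$ of a crystal on the \emph{absolute} prismatic site of $R$, whose evaluation at prisms coming from $R/p$ recovers the classical contravariant crystalline Dieudonn\'e crystal of $G$ with its Frobenius and Verschiebung and whose Nygaard filtration corresponds to the Hodge filtration.

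For full faithfulness I would descend along a $p$-completely faithfully flat perfectoid cover $R_0 \twoheadrightarrow R$ (available since $R$ is quasi-regular semiperfectoid). A morphism $G \to H$ over $R$ may be tested after base change to $R_0$ and to the terms of the \v{C}ech nerve of $R_0 \to R$, all of which are quasi-regular semiperfectoid with semiperfect reductions modulo $p$; over the perfectoid $R_0$ the previous paragraph applies, and the Nygaard filtration records the descent datum. It then remains to show that a morphism of filtered prismatic Dieudonn\'e modules over $R$ which becomes, compatibly along the \v{C}ech nerve, a morphism of $p$-divisible groups already descends to $R$, and this follows from the compatibility with crystalline Dieudonn\'e crystals together with the (known) full faithfulness of the crystalline Dieudonn\'e functor over semiperfect rings, compatibility with $\varphi$ and with the filtration being automatic.

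Essential surjectivity is the main obstacle. Given $(M, \w{Fil}\,M, \varphi_M)$ over $R$, base change along $R_0 \twoheadrightarrow R$ produces a minuscule Breuil--Kisin--Fargues module over $A_{\w{inf}}(R_0)$ and hence, by the perfectoid case, a $p$-divisible group $G_0$ over $R_0$; the problem is to descend $G_0$ to $R$. The submodule $\w{Fil}\,M \subseteq M$ supplies, through the prismatic--crystalline comparison, precisely a Hodge filtration on the Dieudonn\'e crystal of $G_0$ evaluated at the divided-power thickening $R_0 \to R$ (and along the \v{C}ech nerve), i.e.\ the deformation/descent datum that a Grothendieck--Messing-type theorem turns into a $p$-divisible group $G$ over $R$. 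Making this precise requires: (i) a Grothendieck--Messing classification of $p$-divisible groups over $R$ by filtered Dieudonn\'e crystals in this quasi-syntomic generality; (ii) matching that category with filtered prismatic Dieudonn\'e modules using that $\Prism_R$ is $(p,I)$-adically complete and $\Prism_R/\mathcal{N}^{\ge 1}\Prism_R \cong R$; and (iii) a finiteness check that the resulting $G$ is genuinely a $p$-divisible group, which one reduces to the analogous statements over $R_0$ and over $R/p$. Finally one identifies $\underline{M}_{\Prism}(G)$ with $(M,\w{Fil}\,M,\varphi_M)$, both having been built from the same Breuil--Kisin--Fargues module and filtration.
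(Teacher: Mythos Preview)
The paper does not prove this theorem; it is stated twice (once in the introduction with the attribution \cite{AL19}, once in \S\ref{secc4.2}) purely as a citation of the main result of Ansch\"utz--Le Bras, and is used only as motivation and context for the paper's own results (\cref{pri7}, \cref{pri8}, \cref{thmpri}). There is therefore no ``paper's own proof'' to compare your proposal against.

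Your proposal is a reasonable high-level summary of the strategy in \cite{AL19}, but a few points deserve care if you intend it as more than a sketch. First, the map $R_0 \twoheadrightarrow R$ from a perfectoid ring guaranteed by the definition of QRSP is a \emph{surjection}, not in general a $p$-completely faithfully flat cover, so one cannot literally ``descend along a $p$-completely faithfully flat perfectoid cover $R_0 \twoheadrightarrow R$'' in the way you describe; the actual argument in \cite{AL19} proceeds differently, using the explicit description of $\Prism_R$ for QRSP $R$ and the universal cover/vector extension of $G$ rather than fpqc descent from $R_0$. Second, your step (i) --- a Grothendieck--Messing classification in this quasi-syntomic generality --- is precisely the hard part and is not available as an input; \cite{AL19} essentially prove a version of it as part of their argument. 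So while your decomposition into full faithfulness plus essential surjectivity and your identification of the perfectoid case with minuscule Breuil--Kisin--Fargues modules are correct, the descent mechanism you invoke is not the one actually used.
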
{}

Now we proceed towards proving \cref{thmpri}. 
Let $G= \w{Spec}\, B$ be a finite flat group scheme over $R.$ Since $G$ is syntomic, it follows that $\text{Spf} (B)$ is quasi-syntomic. If ${BG}$ denotes the associated quasi-syntomic stack, then $* \to {BG}$ is a $\underline{G}$-torsor and is a quasi-syntomic cover. The \v{C}ech nerve of the map $* \to {BG}$ is given by the following simplicial quasi-syntomic sheaf

 \[\xymatrix{
 \cdots \underline{ G} \times \underline{ G} \times \underline{ G}  \ar[r]<4.5pt>\ar[r]<1.5pt>\ar[r]<-4.5pt>\ar[r]<-1.5pt> &  \underline{  G} \times \underline{ G}  \ar[r]<3pt>\ar[r]\ar[r]<-3pt>  &   \underline{ G}  \ar[r]<1.5pt>\ar[r]<-1.5pt> & \ *\ .
}\]
The associated simplicial abelian group object is   

\[\xymatrix{\cdots \mathbb{Z}[\underline{ G} \times \underline{ G} \times \underline{ G}]  \ar[r]<4.5pt>\ar[r]<1.5pt>\ar[r]<-4.5pt>\ar[r]<-1.5pt> &  \mathbb{Z}[\underline{  G} \times \underline{ G}]  \ar[r]<3pt>\ar[r]\ar[r]<-3pt>  &  \mathbb{Z} [\underline{ G}]  \ar[r]<1.5pt>\ar[r]<-1.5pt> &\ \mathbb{Z}\ .
}\]
With this simplicial object we can associate an object of $D(R_{\textbf{Qsyn}})$ which we will denote simply by $\mathbb{Z}[{BG}].$\\

\begin{proposition}\label{pri2}Let $G$ be a finite flat group scheme over $R.$ Then $R \Gamma_{\Prism} (BG) \simeq R\w{Hom}_{D(R_{\textbf{Qsyn}})}(\mathbb{Z}{[BG}], \mathcal{O}^{\w{pris}}).$

\end{proposition}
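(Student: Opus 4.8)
The plan is to reproduce, in the prismatic setting, the argument that proved \cref{lol2}. As noted just before the statement, $* \to BG$ is a quasi-syntomic cover, and by \cref{c2} one has $R\Gamma_{\Prism}(BG)\simeq R\Gamma(BG_{\textbf{Qsyn}},\mathcal{O}^{\w{pris}})$. \v{C}ech descent for the sheaf $\mathcal{O}^{\w{pris}}$ along this cover therefore gives
\[
R\Gamma_{\Prism}(BG)\;\simeq\;R\varprojlim_{[n]\in\Delta}\,R\Gamma_{\Prism}(G^{n}),
\]
where $G^{n}$ denotes the $p$-adic formal scheme obtained by $p$-completing the $n$-fold product $G\times_{R}\cdots\times_{R}G$ (with $G^{0}=\w{Spf}\,R$), the cosimplicial structure maps being those of the \v{C}ech nerve displayed above. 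Here I use that $G=\w{Spec}\,B$ is finite flat, so each $G^{n}$ is affine and quasi-syntomic, and that the associated quasi-syntomic sheaf $\underline{G^{n}}$ is the $n$-fold product $\underline{G}\times\cdots\times\underline{G}$ in $(R)_{\textbf{Qsyn}}$.

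Next I would rewrite each term via \cref{pri1}: since $G^{n}$ is a $p$-adic formal scheme over $R$,
\[
R\Gamma_{\Prism}(G^{n})\;\simeq\;R\w{Hom}_{D(R_{\textbf{Qsyn}})}(\mathbb{Z}[\underline{G^{n}}],\mathcal{O}^{\w{pris}}),
\]
naturally in $[n]$, so the displayed limit becomes the totalization of the cosimplicial object $[n]\mapsto R\w{Hom}_{D(R_{\textbf{Qsyn}})}(\mathbb{Z}[\underline{G^{n}}],\mathcal{O}^{\w{pris}})$. Finally I would pull $R\w{Hom}(-,\mathcal{O}^{\w{pris}})$ out of the limit: as it sends homotopy colimits in the first variable to homotopy limits, this totalization equals $R\w{Hom}_{D(R_{\textbf{Qsyn}})}(C,\mathcal{O}^{\w{pris}})$, where $C$ is the homotopy colimit over $\Delta^{\w{op}}$ of $[n]\mapsto\mathbb{Z}[\underline{G^{n}}]$, equivalently the alternating face map complex of the simplicial abelian sheaf displayed above. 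By construction this is precisely $\mathbb{Z}[BG]\in D(R_{\textbf{Qsyn}})$ — built exactly as $\mathbb{Z}[B\underline{G}]$ was in \cref{3.1} — which gives the claim.

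The step I expect to need the most care is the interchange of the $\Delta$-indexed derived limit with $R\w{Hom}(-,\mathcal{O}^{\w{pris}})$: one must assemble the three identifications above (descent, \cref{pri1}, the levelwise rewriting) into a genuine equivalence of cosimplicial objects in $D(R_{\textbf{Qsyn}})$, after which the conclusion is formal from $R\w{Hom}(\w{hocolim}(-),\mathcal{O}^{\w{pris}})\simeq R\varprojlim R\w{Hom}(-,\mathcal{O}^{\w{pris}})$ together with the definition of $\mathbb{Z}[BG]$. Everything else is already in place: \cref{pri1} handles the affine (formal scheme) case and \cref{c2} supplies the descent. This is the direct prismatic analogue of the proof of \cref{lol2}, where the same manoeuvre — ``take the $R\varprojlim$ inside as a homotopy colimit'' — was used.
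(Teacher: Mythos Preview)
Your proposal is correct and follows the paper's proof essentially verbatim: \v{C}ech descent along $*\to BG$, then \cref{pri1} termwise, then the interchange of $R\varprojlim$ with $R\w{Hom}(-,\mathcal{O}^{\w{pris}})$ via the homotopy colimit description of $\mathbb{Z}[BG]$. The only difference is that you spell out a bit more justification (affineness and quasi-syntomicity of $G^n$, naturality in $[n]$) than the paper does.
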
{}
\begin{proof}By \v{C}ech descent along the quasi-syntomic cover $* \to \underline{BG}$ we obtain that 

       \[ R \Gamma_{\Prism}(BG) \simeq R \varprojlim \big(\cosimp {R \Gamma_{\Prism}(*)}{R \Gamma_{\Prism}(G)}{R \Gamma_{\Prism}(G \times G)} \big).\]
Which by \Cref{pri1} is  \[ \simeq R \varprojlim \big(\cosimp {R\w{Hom}_{D(R)}(\mathbb Z, \mathcal O^{\w{pris}})}{R\w{Hom}_{D(R)}(\mathbb Z[\underline{G} ],\mathcal O^{\w{pris}} )}{R\w{Hom}_{D(R))}(\mathbb Z [\underline{G}\times \underline{G}], \mathcal O^{\w{pris}})} \big) .\]
We can take the $R \varprojlim$ inside as a homotopy colimit, which gives us that the above is $\simeq R\w{Hom}_{D(R_{\textbf{Qsyn}})}(\mathbb{Z}[{BG}], \mathcal{O}^{\w{pris}}),$ as desired. \end{proof}{}

\begin{definition}\label{pdiv} The classifying stack of a $p$-divisible group $G= \left \{G_n \right \}$ is defined to be $BG := \varinjlim {B G_n}$ where the colimit is taken in the category of quasi-syntomic stacks with respect to the site $R_{\textbf{Qsyn}}.$
\end{definition}

Since filtered colimits commute with finite limits, the \v{C}ech nerve of $* \to BG$ is given by the simplicial quasi-syntomic sheaf 

  \[\xymatrix{
 \cdots \underline{ G} \times \underline{ G} \times \underline{ G}  \ar[r]<4.5pt>\ar[r]<1.5pt>\ar[r]<-4.5pt>\ar[r]<-1.5pt> &  \underline{  G} \times \underline{ G}  \ar[r]<3pt>\ar[r]\ar[r]<-3pt>  &   \underline{ G}  \ar[r]<1.5pt>\ar[r]<-1.5pt> & \ *\ .
}\]
The associated simplicial abelian group object is 

 \[\xymatrix{\cdots \mathbb{Z}[\underline{ G} \times \underline{ G} \times \underline{ G}]  \ar[r]<4.5pt>\ar[r]<1.5pt>\ar[r]<-4.5pt>\ar[r]<-1.5pt> &  \mathbb{Z}[\underline{  G} \times \underline{ G}]  \ar[r]<3pt>\ar[r]\ar[r]<-3pt>  &  \mathbb{Z} [\underline{ G}]  \ar[r]<1.5pt>\ar[r]<-1.5pt> &\ \mathbb{Z}\ .
}\]
With this simplicial object, we can associate an object of $D(R_{\textbf{Qsyn}})$ which will be denoted by $\mathbb{Z}[BG].$ Since filtered colimits are exact in the category of abelian sheaves on $R_{\textbf{Qsyn}},$ we have that $\varinjlim \mathbb{Z}[BG_n] \simeq \mathbb{Z}[BG].$\\
\begin{proposition}\label{pri3} For a $p$-divisible group $G= \left \{ G_n \right \}$ we have $R \Gamma_{\Prism}  (BG) \simeq \varprojlim R \Gamma_{\Prism} (B G_n).$
\end{proposition}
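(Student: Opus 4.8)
The plan is to follow the pattern of \Cref{p'}: since $BG = \varinjlim_n BG_n$, and prismatic cohomology of a quasi-syntomic stack only depends on its functor of points, the derived inverse limit should fall out formally once one has the $R\w{Hom}$-description of $R\Gamma_\Prism$ from \Cref{pri2}. Concretely, I would first upgrade \Cref{pri2} to the present $p$-divisible $G$. The map $* \to BG$ is an effective epimorphism of quasi-syntomic sheaves of groupoids: any $\w{Spf}\,S \to BG$ with $S\in R_{\textbf{Qsyn}}$ factors through some $BG_n$ (filtered colimit), and $* \to BG_n$ is a quasi-syntomic cover, so $* \to BG$ is a cover locally. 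As recorded just before the statement, its \v{C}ech nerve is the simplicial object $\cdots \rightrightarrows \underline{G}\times\underline{G}\times\underline{G} \rightrightarrows \underline{G}\times\underline{G} \rightrightarrows \underline{G} \rightrightarrows *$ with $\underline{G}=\varinjlim_n\underline{G_n}$, using that filtered colimits commute with finite limits in $\w{Shv}(R_{\textbf{Qsyn}})$. Running the \v{C}ech-descent argument of \Cref{pri2} verbatim — applying \Cref{pri1} degreewise to the representable pieces $\underline{G_n}^{\times i}$, passing to the filtered colimit, and then commuting $R\varprojlim$ past the homotopy colimit over $\Delta^{\w{op}}$ — yields
$$R\Gamma_\Prism(BG) \simeq R\w{Hom}_{D(R_{\textbf{Qsyn}})}(\mathbb{Z}[BG], \mathcal{O}^{\w{pris}}).$$

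Next I would use the observation, made just before the statement, that $\mathbb{Z}[BG] \simeq \varinjlim_n \mathbb{Z}[BG_n]$ in $D(R_{\textbf{Qsyn}})$, valid because filtered colimits are exact on abelian sheaves. Since $R\w{Hom}(-, \mathcal{O}^{\w{pris}})$ converts homotopy colimits in the first argument into homotopy limits,
$$R\Gamma_\Prism(BG) \simeq R\w{Hom}\Big(\varinjlim_n \mathbb{Z}[BG_n],\, \mathcal{O}^{\w{pris}}\Big) \simeq R\varprojlim_n R\w{Hom}\big(\mathbb{Z}[BG_n],\, \mathcal{O}^{\w{pris}}\big).$$
Each $G_n$ is a finite flat group scheme over $R$, so \Cref{pri2} identifies $R\w{Hom}(\mathbb{Z}[BG_n], \mathcal{O}^{\w{pris}}) \simeq R\Gamma_\Prism(BG_n)$, and combining the two displays proves the proposition.

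The step I expect to be the main obstacle is the first one: making the $R\w{Hom}$-formula of \Cref{pri2} available for the $p$-divisible $G$, whose sheaf $\underline{G}$ is only an increasing union of finite flat group schemes rather than representable by a single quasi-syntomic ring. One must check that $* \to BG$ is a genuine effective epimorphism, that $R\Gamma_\Prism$ descends along it, and that the terms $\underline{G}^{\times i}$ of its \v{C}ech nerve still have prismatic cohomology computed by an $R\w{Hom}$ out of their linearizations. An alternative that sidesteps this, mirroring the proof of \Cref{p'} directly, is to argue that $R\Gamma_\Prism$ of a quasi-syntomic stack depends only on its restriction to QRSP rings (via \Cref{qr} and \Cref{c2}), and that $BG$ is already the colimit of $\{BG_n\}$ as presheaves of groupoids on QRSP rings: over such a ring the stack condition is a finite-limit condition ($2$-truncated descent along a quasi-syntomic cover, which for an affine may be taken finite), and filtered colimits commute with finite limits. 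Then $R\Gamma_\Prism(\varinjlim_n BG_n) \simeq R\varprojlim_n R\Gamma_\Prism(BG_n)$ is immediate from the definition of $R\Gamma_\Prism$ as a limit over the slice category.
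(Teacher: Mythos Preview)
Your proposal is correct. The paper's proof is one line—``This follows formally since $BG$ is defined to be the filtered colimit of the quasi-syntomic stacks $BG_n$''—which is exactly your alternative at the end, stated tersely: since covers in $R_{\mathbf{Qsyn}}$ are single ring maps, the descent condition is a $2$-truncated (hence finite) limit, so the presheaf filtered colimit $\varinjlim BG_n$ is already a stack, and then $R\Gamma_{\Prism}(\mathcal{Y})=R\varprojlim_{\w{Spf}\,A\to\mathcal{Y}}\Prism_A$ converts it to a limit. Your main approach is genuinely different: you first establish the identification $R\Gamma_{\Prism}(BG)\simeq R\w{Hom}(\mathbb{Z}[BG],\mathcal{O}^{\w{pris}})$ via \v{C}ech descent along $*\to BG$, and only then extract \Cref{pri3}. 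This reverses the paper's order—that identification is precisely \Cref{pri4}, which the paper deduces \emph{from} \Cref{pri3} together with \Cref{pri2} (the remark following \Cref{pri4} even flags your descent route as an alternative). Your approach thus yields both statements at once, at the cost of the extra verifications you flag as the main obstacle. One caution: in your step ``apply \Cref{pri1} to the representable pieces $\underline{G_n}^{\times i}$ and pass to the filtered colimit'' to identify $R\Gamma_{\Prism}(\underline{G}^{\times i})$ with $R\w{Hom}(\mathbb{Z}[\underline{G}^{\times i}],\mathcal{O}^{\w{pris}})$, you are quietly invoking the same colimit-to-limit principle at the $0$-truncated level; it is cleaner to observe directly that $R\Gamma(\mathcal{F}_{\mathbf{Qsyn}},\mathcal{O}^{\w{pris}})\simeq R\w{Hom}(\mathbb{Z}[\mathcal{F}],\mathcal{O}^{\w{pris}})$ holds for any sheaf of sets $\mathcal{F}$ by the slice-topos adjunction, which subsumes \Cref{pri1}.
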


\begin{proof}This follows formally since $BG$ is defined to be filtered colimit of the quasi-syntomic stacks $BG_n.$ \end{proof}

\begin{proposition}\label{pri4}Let $G$ be a $p$-divisible group over $R.$ Then $R \Gamma_{\Prism} (BG) \simeq R\w{Hom}_{D(R_{\textbf{Qsyn}})}(\mathbb{Z}[\underline{BG}], \mathcal{O}^{\w{pris}}).$

\end{proposition}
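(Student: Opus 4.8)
The plan is to reduce \cref{pri4} to the finite-level identity \cref{pri2} via the compatibility \cref{pri3}, using that $R\w{Hom}(-,\mathcal{O}^{\w{pris}})$ converts a filtered colimit of complexes of abelian sheaves into an inverse limit. Concretely, I would run the chain of equivalences
\begin{align*}
R\Gamma_{\Prism}(BG) &\simeq R\varprojlim_n R\Gamma_{\Prism}(BG_n) \simeq R\varprojlim_n R\w{Hom}_{D(R_{\textbf{Qsyn}})}\big(\mathbb{Z}[BG_n],\, \mathcal{O}^{\w{pris}}\big)\\
&\simeq R\w{Hom}_{D(R_{\textbf{Qsyn}})}\big(\varinjlim_n \mathbb{Z}[BG_n],\, \mathcal{O}^{\w{pris}}\big) \simeq R\w{Hom}_{D(R_{\textbf{Qsyn}})}\big(\mathbb{Z}[\underline{BG}],\, \mathcal{O}^{\w{pris}}\big),
\end{align*}
where the first equivalence is \cref{pri3}, the second is \cref{pri2} applied to each finite flat group scheme $G_n$ (and is natural in $n$ with respect to the transition maps $G_n \hookrightarrow G_{n+1}[p^n] \subseteq G_{n+1}$, so that it refines to an equivalence of towers), the third is the universal property of colimits, i.e.\ the fact that $R\w{Hom}_{D(R_{\textbf{Qsyn}})}(-,\mathcal{O}^{\w{pris}})$ carries a homotopy colimit of complexes of abelian sheaves to the corresponding homotopy limit in $D(\mathbb{Z})$, and the last equivalence is the identification of $\varinjlim_n \mathbb{Z}[BG_n]$ with $\mathbb{Z}[\underline{BG}]$.

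For that last identification I would invoke the observation recorded just before \cref{pri3}: filtered colimits of abelian sheaves on $R_{\textbf{Qsyn}}$ are exact, so they already compute the homotopy colimit; and since finite products and the free abelian group functor $\mathbb{Z}[-]$ both commute with filtered colimits in the topos of sheaves on $R_{\textbf{Qsyn}}$, one has, in each simplicial degree $m$, a levelwise equivalence $\varinjlim_n \mathbb{Z}[\underline{G_n}^{\times m}] \simeq \mathbb{Z}[\underline{G}^{\times m}]$. Passing to the associated alternating face complexes (equivalently, realizing over $\Delta^{\mathrm{op}}$) upgrades this to $\varinjlim_n \mathbb{Z}[BG_n] \simeq \mathbb{Z}[BG]$ in $D(R_{\textbf{Qsyn}})$, which is exactly the object $\mathbb{Z}[\underline{BG}]$ appearing in the statement.

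The argument is essentially formal once \cref{pri2} and \cref{pri3} are in hand, so I do not anticipate a substantive obstacle; the two points that warrant a sentence of justification are (i) the naturality of the equivalences in \cref{pri2} in the finite flat group scheme $G_n$, so that they assemble into a map of $\mathbb{N}$-indexed towers, and (ii) the passage of the derived limit through $R\w{Hom}$, which holds because $R\w{Hom}(-,Y)$ is a contravariant right adjoint at the level of derived (equivalently $\infty$-)categories and hence sends colimits to limits. All of the genuine content has already been isolated in \cref{pri2}, \cref{pri3}, and the colimit identity $\varinjlim_n \mathbb{Z}[BG_n]\simeq\mathbb{Z}[\underline{BG}]$.
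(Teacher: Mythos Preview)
Your proposal is correct and follows essentially the same route as the paper: both invoke \cref{pri3} and \cref{pri2} to rewrite $R\Gamma_{\Prism}(BG)$ as $R\varprojlim_n R\w{Hom}(\mathbb{Z}[BG_n],\mathcal{O}^{\w{pris}})$, then pull the limit inside $R\w{Hom}$ as a colimit and use $\varinjlim_n \mathbb{Z}[BG_n]\simeq \mathbb{Z}[BG]$. If anything, you supply slightly more justification (naturality in $n$, the levelwise colimit argument) than the paper's terse proof.
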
{}

\begin{proof}
By \cref{pri3} and \Cref{pri2} we have $$R \Gamma_{\Prism}  (BG) \simeq \varprojlim R \Gamma_{\Prism} (B G_n) \simeq \varprojlim R\w{Hom}_{D(R_{\textbf{Qsyn}})}(\mathbb{Z}[{BG_n}], \mathcal{O}^{\w{pris}}).$$
Now $$ \varprojlim R\w{Hom}_{D(R_{\textbf{Qsyn}})}(\mathbb{Z}[{BG_n}], \mathcal{O}^{\w{pris}}) \simeq R\w{Hom}_{D(R_{\textbf{Qsyn}})}(\varinjlim\mathbb{Z}[{BG_n}], \mathcal{O}^{\w{pris}}).$$\\
Since $\varinjlim \mathbb{Z}[BG_n] \simeq \mathbb{Z}[BG],$ we obtain the required statement.
\end{proof}{}

\begin{remark}Alternatively, one could descend along the effective epimorphism $* \to BG.$

\end{remark}{}

\begin{proposition}\label{pri5}
There is a spectral sequence with $E_2$-page $$E_2^{i,j}= \w{Ext}_{R_{\textbf{Qsyn}}}^i (H^{-j}(\mathbb{Z}[B G], \mathcal{O}^{pris})\implies H^{i+j}_{\Prism}(BG), $$ and another spectral sequence with $E_1$-page 
$$ E_1^{i,j}=H^j_{\Prism}(G^i) \implies H^{i+j}_{\Prism}(BG), $$ where $G^i$ denotes the $i$ fold fibre product of $\text{Spf}\, G$ with itself. By convention, $G^0 = *.$

\end{proposition}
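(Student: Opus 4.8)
The plan is to carry over the proof of \Cref{lol3} essentially verbatim, the crystalline inputs being replaced by their prismatic analogues established above. For the first spectral sequence, I would start from \Cref{pri4} (or \Cref{pri2} when $G$ is finite flat), which identifies $R\Gamma_{\Prism}(BG)$ with $R\w{Hom}_{D(R_{\textbf{Qsyn}})}(\mathbb{Z}[BG], \mathcal{O}^{\w{pris}})$, and then apply the hyper-Ext spectral sequence \cite[Tag 07A9]{SP} to the object $\mathbb{Z}[BG]$ of the derived category of abelian sheaves on $R_{\textbf{Qsyn}}$ against the sheaf $\mathcal{O}^{\w{pris}}$. This yields $E_2^{i,j} = \w{Ext}^i_{R_{\textbf{Qsyn}}}(H^{-j}(\mathbb{Z}[BG]), \mathcal{O}^{\w{pris}}) \implies H^{i+j}_{\Prism}(BG)$. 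The only convergence point worth a sentence is that $\mathbb{Z}[BG]$, being the alternating face complex of a simplicial abelian sheaf, is concentrated in non-positive cohomological degrees, so that the spectral sequence lies in a single quadrant and converges.

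For the second spectral sequence I would use \v{C}ech descent along the quasi-syntomic cover $* \to BG$, exactly as in the proof of \Cref{pri2}: $R\Gamma_{\Prism}(BG)$ is the homotopy limit of the cosimplicial object $[i] \mapsto R\Gamma_{\Prism}(G^i)$ coming from the \v{C}ech nerve (with $G^0 = *$), and for a $p$-divisible $G$ one first passes to the finite level via \Cref{pri3}. The spectral sequence of this cosimplicial object in the derived category of abelian groups then reads $E_1^{i,j} = H^j\big(R\Gamma_{\Prism}(G^i)\big) = H^j_{\Prism}(G^i) \implies H^{i+j}_{\Prism}(BG)$, where the individual terms are identified using \Cref{pri1}.

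I do not expect any genuine obstacle here: both assertions are formal consequences of \Cref{pri1}--\Cref{pri4} together with standard homological algebra in a sheaf topos, and the argument is identical in shape to the one behind \Cref{lol3}. The only items meriting a clause of care are that the category of abelian sheaves on $R_{\textbf{Qsyn}}$ has enough injectives (so that \cite[Tag 07A9]{SP} applies) and the elementary bookkeeping that the cosimplicial degree in the second spectral sequence matches the number of copies of $G$, so that the $E_1$-term is literally $H^j_{\Prism}(G^i)$ with the convention $G^0 = *$.
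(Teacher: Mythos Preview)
Your proposal is correct and matches the paper's approach exactly: the paper's proof of \Cref{pri5} simply says ``This follows in a way similar to \Cref{lol3},'' and \Cref{lol3} in turn is deduced from \cite[Tag 07A9]{SP}, \Cref{lol2} and \Cref{why}, whose prismatic analogues are precisely \Cref{pri4}/\Cref{pri2} and \Cref{pri1} that you invoke. Your additional remarks on convergence and the existence of enough injectives are harmless extra care.
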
{}

\begin{proof}This follows in a way similar to \Cref{lol3}.
\end{proof}{}

\begin{lemma}\label{pri6}$H^0 (\mathbb{Z}[BG]) \simeq \mathbb{Z},$ $H^{-1}(\mathbb{Z}[BG]) \simeq \underline{G},$ and $H^{-2}(\mathbb{Z}[BG]) \simeq \underline{G} \wedge \underline{G}.$

\end{lemma}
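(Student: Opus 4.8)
The plan is to mimic the proofs of \Cref{hah} and \Cref{lol1}: compute everything first at the level of presheaves and then sheafify, using that sheafification is exact. Write $\mathbb{Z}^{\w{pre}}[BG]$ for the complex obtained by applying the free abelian \emph{presheaf} functor to the simplicial quasi-syntomic sheaf $B\underline{G}$ given by the \v{C}ech nerve of $* \to BG$, and passing to the alternating face complex
\[
K^\bullet : \qquad \cdots \to \mathbb{Z}^{\w{pre}}[\underline{G}\times\underline{G}] \to \mathbb{Z}^{\w{pre}}[\underline{G}] \to \mathbb{Z} \to 0 ,
\]
with $\mathbb{Z}$ placed in cohomological degree $0$. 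Since sheafification is exact, $H^{-i}(\mathbb{Z}[BG])$ is the sheafification of the presheaf $A \mapsto H^{-i}(K^\bullet(A))$, so it is enough to identify these presheaves for $i = 0,1,2$.

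The key observation is that for each $A \in R_{\mathbf{Qsyn}}$ the complex $K^\bullet(A)$ is precisely the (normalized) bar complex computing the group homology $H_\ast(\underline{G}(A),\mathbb{Z})$ of the abelian group $\underline{G}(A)$ with trivial $\mathbb{Z}$-coefficients --- equivalently the singular homology of the Eilenberg--MacLane space $K(\underline{G}(A),1)$ --- exactly as recalled in the proof of \Cref{lol1}. Hence $H^{-i}(K^\bullet)$ is the presheaf $A \mapsto H_i(\underline{G}(A),\mathbb{Z})$, and I would then invoke the classical computation of low-degree homology of an abelian group $\Gamma$: naturally in $\Gamma$ one has $H_0(\Gamma,\mathbb{Z}) \cong \mathbb{Z}$, $H_1(\Gamma,\mathbb{Z}) \cong \Gamma$, and $H_2(\Gamma,\mathbb{Z}) \cong \Lambda^2 \Gamma$ (Hopf's formula, which for an abelian group identifies the second homology with the exterior square). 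The case $i=0$ can alternatively be seen directly as in \Cref{hah}: the last differential $\mathbb{Z}^{\w{pre}}[\underline{G}] \to \mathbb{Z}$ is the alternating sum of the two equal structure maps, hence zero.

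Sheafifying now yields the three claims: $H^0(\mathbb{Z}[BG]) \simeq \mathbb{Z}$; $H^{-1}(\mathbb{Z}[BG]) \simeq \underline{G}$, since $\underline{G}$ is already a sheaf; and $H^{-2}(\mathbb{Z}[BG])$ is the sheafification of the presheaf exterior square $A \mapsto \Lambda^2 \underline{G}(A)$, which is by definition $\underline{G} \wedge \underline{G}$. When $G = \varinjlim G_n$ is $p$-divisible this is moreover consistent with the identification $\mathbb{Z}[BG] \simeq \varinjlim \mathbb{Z}[BG_n]$ noted before \Cref{pri3}, because both $\Lambda^2(-)$ and sheafification commute with filtered colimits.

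I expect the only genuine point requiring care to be the functoriality of the identification $H_2(\Gamma,\mathbb{Z}) \cong \Lambda^2 \Gamma$: it must be natural in $\Gamma$ so that it upgrades to an isomorphism of presheaves and therefore survives sheafification (and so that it is compatible with the transition maps $\underline{G_n} \to \underline{G_{n+1}}$ in the $p$-divisible case). Everything else --- exactness of sheafification, and the bar-complex description of $\mathbb{Z}[B\underline{G}]$ --- is already available from \Cref{hah} and \Cref{lol1}.
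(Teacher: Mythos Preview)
Your proposal is correct and follows essentially the same route as the paper: reduce to presheaf group homology as in \Cref{hah}, invoke the classical identification $H_2(\Gamma,\mathbb{Z})\cong\Lambda^2\Gamma$ for abelian $\Gamma$ (the paper cites Miller \cite{CM52} for this), and then sheafify. Your explicit emphasis on the naturality of the $H_2\cong\Lambda^2$ isomorphism is a welcome clarification that the paper leaves implicit.
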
{}
\begin{proof}
This follows exactly in the same ways as in the proof of \Cref{hah} after further noting that for an abelian group $G$, the second group homology with integral coefficients $H_2 (G, \mathbb{Z}) \simeq G \wedge G.$ This isomorphism can be found in \cite[Thm. 3.]{CM52}.
\end{proof}
\begin{proposition}\label{pri7}
There is a natural isomorphism $H^2_{\Prism}(BG) \simeq \w{Ext}^1_{R_{\textbf{Qsyn}}} (\underline{G}, \mathcal{O}^{\w{pris}}).$
\end{proposition}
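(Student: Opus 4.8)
The plan is to extract $H^2_{\Prism}(BG)$ from the first spectral sequence of \Cref{pri5}, whose $E_2$-page is $E_2^{i,j}=\w{Ext}^i_{R_{\textbf{Qsyn}}}(H^{-j}(\mathbb{Z}[BG]),\mathcal{O}^{\w{pris}})$ and which converges to $H^{i+j}_{\Prism}(BG)$, feeding in the computation of $H^{-j}(\mathbb{Z}[BG])$ for $j\le 2$ carried out in \Cref{pri6}. Since $\mathbb{Z}[BG]$ is concentrated in cohomological degrees $\le 0$, this spectral sequence lives in the first quadrant, so only $E_2^{2,0}$, $E_2^{1,1}$ and $E_2^{0,2}$ can contribute to total degree $2$; by \Cref{pri6} these equal $\w{Ext}^2_{R_{\textbf{Qsyn}}}(\mathbb{Z},\mathcal{O}^{\w{pris}})$, $\w{Ext}^1_{R_{\textbf{Qsyn}}}(\underline{G},\mathcal{O}^{\w{pris}})$ and $\w{Hom}_{R_{\textbf{Qsyn}}}(\underline{G}\wedge\underline{G},\mathcal{O}^{\w{pris}})$ respectively. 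The middle term is exactly the group in the statement, so the task reduces to showing that the two outer terms vanish and that no differential disturbs $E_2^{1,1}$.

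First I would dispose of the terms $E_2^{i,0}$ for $i\ge 1$: one has $\w{Ext}^i_{R_{\textbf{Qsyn}}}(\mathbb{Z},\mathcal{O}^{\w{pris}})=H^i\!\big(R\Gamma(R_{\textbf{Qsyn}},\mathcal{O}^{\w{pris}})\big)=H^i(\Prism_R)$, which vanishes for $i\ge 1$ because $R$ is QRSP and hence $\Prism_R$ is discrete (\cite[Prop. 7.10.]{BS19}); in particular $E_2^{2,0}=E_2^{3,0}=0$. Next, for $E_2^{0,2}=\w{Hom}_{R_{\textbf{Qsyn}}}(\underline{G}\wedge\underline{G},\mathcal{O}^{\w{pris}})$: multiplication by $p$ is an epimorphism of sheaves on $\underline{G}$ (as $G$ is $p$-divisible), hence also on $\underline{G}\wedge\underline{G}$, so any map from $\underline{G}\wedge\underline{G}$ into the $p^n$-torsion sheaf $\mathcal{O}^{\w{pris}}/p^n$ must be zero (a section of $\underline{G}\wedge\underline{G}$ can, after passing to a quasi-syntomic cover, be divided by $p^n$, so its image is $p^n$-divisible); since $\mathcal{O}^{\w{pris}}$ is derived $p$-complete, taking the limit over $n$ gives $E_2^{0,2}=0$. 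This vanishing — that the exterior square $\underline{G}\wedge\underline{G}$ showing up in $H^{-2}(\mathbb{Z}[BG])$ contributes nothing — is the one mildly nonformal point, and is precisely why \Cref{pri6} takes the trouble to identify $H^{-2}(\mathbb{Z}[BG])$.

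Finally I would carry out the differential bookkeeping. Being first-quadrant, the term $E_r^{1,1}$ receives no differentials, and every differential leaving it, starting with $d_2\colon E_2^{1,1}\to E_2^{3,0}$, has target below the first quadrant or equal to the already-vanishing $E_2^{3,0}$; hence $E_\infty^{1,1}=E_2^{1,1}=\w{Ext}^1_{R_{\textbf{Qsyn}}}(\underline{G},\mathcal{O}^{\w{pris}})$. Together with $E_\infty^{2,0}=E_\infty^{0,2}=0$, the abutment filtration on $H^2_{\Prism}(BG)$ has a single nonzero graded piece, which yields the isomorphism $H^2_{\Prism}(BG)\simeq\w{Ext}^1_{R_{\textbf{Qsyn}}}(\underline{G},\mathcal{O}^{\w{pris}})$; its naturality in $G$ follows from the functoriality of the spectral sequence, which in turn comes from that of $\mathbb{Z}[BG]$. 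An alternative route would be to reduce to finite level via \Cref{pri3} together with a prismatic analogue of \Cref{lol6}, but the direct spectral-sequence argument above is cleaner given \Cref{pri6}, and the main obstacle in either case is the same: controlling the $H^{-2}$-contribution coming from $\underline{G}\wedge\underline{G}$.
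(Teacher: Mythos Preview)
Your proof is correct and follows essentially the same route as the paper: both run the $E_2$ spectral sequence of \Cref{pri5}, invoke \Cref{pri6} to identify the low-degree rows, kill $E_2^{i,0}$ for $i\ge 1$ via the discreteness of $\Prism_R$ for QRSP $R$, and kill $E_2^{0,2}=\w{Hom}(\underline{G}\wedge\underline{G},\mathcal{O}^{\w{pris}})$ using $p$-divisibility of the source against $p$-completeness of the target. The only cosmetic difference is in that last step: the paper passes through the surjection $\underline{G}\otimes\underline{G}\twoheadrightarrow\underline{G}\wedge\underline{G}$ and the tensor--hom adjunction to reduce to $\mathscr{H}om(\underline{G},\mathcal{O}^{\w{pris}})=0$, whereas you argue directly that $p$ remains an epimorphism on $\underline{G}\wedge\underline{G}$; both are the same idea.
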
{}

\begin{proof}By the $E_2$ spectral sequence from \Cref{pri5} and \Cref{pri6}, this will follow once we prove that $\w{Hom}_{R_{\mathbf{Qsyn}}}(\underline{G} \wedge \underline{G}, \mathcal{O}^{\w{pris}})= 0$ and $\w{Ext}^i_{R_{\mathbf{Qsyn}}}(\mathbb{Z}, \mathcal{O}^{\w{pris}})=0$ for $i \ge 1$. We begin by proving the first vanishing. We have a surjection $\underline{G}\otimes \underline{G} \to \underline{G} \wedge \underline{G}\to 0$ which gives us an injection $$0 \to \w{Hom}_{R_{\mathbf{Qsyn}}}(\underline{G} \wedge \underline{G}, \mathcal{O}^{\w{pris}}) \to \w{Hom}_{R_{\mathbf{Qsyn}}}(\underline{G} \otimes \underline{G}, \mathcal{O}^{\w{pris}}). $$ Thus it is enough to show that $\w{Hom}_{R_{\mathbf{Qsyn}}}(\underline{G} \otimes \underline{G}, \mathcal{O}^{\w{pris}}) = 0.$ Indeed,

$$\w{Hom}_{R_{\mathbf{Qsyn}}}(\underline{G} \otimes \underline{G}, \mathcal{O}^{\w{pris}}) \simeq \w{Hom}_{R_{\mathbf{Qsyn}}}(\underline{G}, \mathscr{H}{om}_{R_{\mathbf{Qsyn}}}( \underline{G}, \mathcal{O}^{\w{pris}})),$$ 
and $\mathscr{H}{om}_{R_{\mathbf{Qsyn}}}( \underline{G}, \mathcal{O}^{\w{pris}})= 0$ since $G$ is $p$-divisible and $\mathcal{O}^{\w{pris}}$ is $p$-adically complete. Therefore, we obtain the required statement. Now the second vanishing follows from the fact that for a QRSP ring $S$, its prismatic cohomology $\Prism_S$ is discrete, i.e., lives only in degre 0. This finishes the proof. \end{proof}{}

\begin{proposition}\label{pri8}In the above situation, $\mathcal{N}^{\ge 1} H^2_{\Prism} (BG):= H^2 (\mathcal{N}^{\ge 1}R \Gamma_{\Prism}(BG)) \subset H^2_{\Prism} (BG)$ and $\mathcal{N}^{\ge 1} H^2_{\Prism} (BG) \simeq \w{Ext}^1_{R_{\textbf{Qsyn}}} (\underline{G}, \mathcal{N}^{\ge 1}\mathcal{O}^{\w{pris}})$

\end{proposition}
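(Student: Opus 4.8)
The proof will run in close parallel to that of \Cref{pri7}, working throughout with the filtered sheaf $\mathcal{N}^{\ge 1}\mathcal{O}^{\w{pris}}$ in place of $\mathcal{O}^{\w{pris}}$. First I would re-run the \v{C}ech-descent computation of \Cref{pri2} and \Cref{pri4} verbatim for $\mathcal{N}^{\ge 1}\mathcal{O}^{\w{pris}}$: descending along $* \to BG$ and then pulling the $R\varprojlim$ inside as a homotopy colimit gives $\mathcal{N}^{\ge 1}R\Gamma_{\Prism}(BG) = R\Gamma(BG_{\textbf{Qsyn}}, \mathcal{N}^{\ge 1}\mathcal{O}^{\w{pris}}) \simeq R\w{Hom}_{D(R_{\textbf{Qsyn}})}(\mathbb{Z}[BG], \mathcal{N}^{\ge 1}\mathcal{O}^{\w{pris}})$, where one uses that $S \mapsto \mathcal{N}^{\ge \bullet}\Prism_S$ is a quasi-syntomic sheaf (so the descent and the adjunction of \Cref{pri1} are legitimate) and that $\varinjlim \mathbb{Z}[BG_n] \simeq \mathbb{Z}[BG]$ to treat the $p$-divisible group as a colimit of finite flat ones. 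From here the hypercohomology spectral sequence of \cite[Tag 07A9]{SP} yields, exactly as in \Cref{pri5}, a spectral sequence
$$E_2^{i,j} = \w{Ext}^i_{R_{\textbf{Qsyn}}}(H^{-j}(\mathbb{Z}[BG]), \mathcal{N}^{\ge 1}\mathcal{O}^{\w{pris}}) \implies H^{i+j}(\mathcal{N}^{\ge 1}R\Gamma_{\Prism}(BG)).$$

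Next I would plug in the computation of $H^{-j}(\mathbb{Z}[BG])$ from \Cref{pri6} ($\mathbb{Z}$ in degree $0$, $\underline{G}$ in degree $-1$, $\underline{G}\wedge\underline{G}$ in degree $-2$) and isolate the contributions to $H^2$, coming from $(i,j)\in\{(2,0),(1,1),(0,2)\}$. The two ``new'' vanishing inputs needed are: (a) $\w{Ext}^i_{R_{\textbf{Qsyn}}}(\mathbb{Z}, \mathcal{N}^{\ge 1}\mathcal{O}^{\w{pris}}) = H^i(R, \mathcal{N}^{\ge 1}\mathcal{O}^{\w{pris}}) = 0$ for $i\ge 1$, which is precisely the statement (recorded in the remark following \Cref{nag}, via \cite[Thm. 12.2.]{BS19} and \cite[Thm. 3.1.]{BMS19}) that $S\mapsto \mathcal{N}^{\ge\bullet}\Prism_S$ has vanishing higher cohomology on QRSP rings, applied to the QRSP ring $R$; and (b) $\w{Hom}_{R_{\textbf{Qsyn}}}(\underline{G}\wedge\underline{G}, \mathcal{N}^{\ge 1}\mathcal{O}^{\w{pris}}) = 0$, which I would deduce exactly as in \Cref{pri7}: the surjection $\underline{G}\otimes\underline{G}\to\underline{G}\wedge\underline{G}$ reduces it to $\mathscr{H}{om}_{R_{\textbf{Qsyn}}}(\underline{G}, \mathscr{H}{om}_{R_{\textbf{Qsyn}}}(\underline{G},\mathcal{N}^{\ge 1}\mathcal{O}^{\w{pris}})) = 0$, and $\mathscr{H}{om}_{R_{\textbf{Qsyn}}}(\underline{G},\mathcal{N}^{\ge 1}\mathcal{O}^{\w{pris}}) = 0$ since $G$ is $p$-divisible while $\mathcal{N}^{\ge 1}\mathcal{O}^{\w{pris}}$ is derived $p$-complete, being the kernel of the map $\mathcal{O}^{\w{pris}}\xrightarrow{\varphi}\mathcal{O}^{\w{pris}}\to \mathcal{O}^{\w{pris}}/I\mathcal{O}^{\w{pris}}$ between derived $p$-complete sheaves (derived $p$-complete abelian sheaves being closed under kernels). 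Granting (a) and (b), the $(2,0)$ and $(0,2)$ spots vanish, the $d_r$-differentials into and out of the $(1,1)$ spot all land in groups with a negative index, hence $E_2^{1,1} = E_\infty^{1,1}$, and so $H^2(\mathcal{N}^{\ge 1}R\Gamma_{\Prism}(BG)) \simeq \w{Ext}^1_{R_{\textbf{Qsyn}}}(\underline{G}, \mathcal{N}^{\ge 1}\mathcal{O}^{\w{pris}}) = \w{Fil}\,M_{\Prism}(G)$.

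Finally, for the assertion that $\mathcal{N}^{\ge 1}H^2_{\Prism}(BG)$ is genuinely a \emph{submodule} of $H^2_{\Prism}(BG)$, I would use naturality: the inclusion $\mathcal{N}^{\ge 1}\mathcal{O}^{\w{pris}}\hookrightarrow\mathcal{O}^{\w{pris}}$ induces a map from the above spectral sequence to the one of \Cref{pri5}, and under the identifications of both second cohomologies with their respective $E_\infty^{1,1} = \w{Ext}^1(\underline{G},-)$ terms the comparison map becomes $\w{Ext}^1_{R_{\textbf{Qsyn}}}(\underline{G},\mathcal{N}^{\ge 1}\mathcal{O}^{\w{pris}})\to \w{Ext}^1_{R_{\textbf{Qsyn}}}(\underline{G},\mathcal{O}^{\w{pris}})$. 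Its injectivity follows from the long exact sequence attached to $0\to\mathcal{N}^{\ge 1}\mathcal{O}^{\w{pris}}\to\mathcal{O}^{\w{pris}}\to\mathcal{O}^{\w{pris}}/\mathcal{N}^{\ge 1}\mathcal{O}^{\w{pris}}\to 0$ once one knows $\w{Hom}_{R_{\textbf{Qsyn}}}(\underline{G},\mathcal{O}^{\w{pris}}/\mathcal{N}^{\ge 1}\mathcal{O}^{\w{pris}}) = 0$; this in turn comes from $p$-divisibility of $G$ together with the fact that $\varphi$ embeds $\Prism_S/\mathcal{N}^{\ge 1}\Prism_S$ into the $p$-adically complete ring $\overline{\Prism}_S = \Prism_S/I\Prism_S$, so that $\mathcal{O}^{\w{pris}}/\mathcal{N}^{\ge 1}\mathcal{O}^{\w{pris}}$ is $p$-adically separated (alternatively, one can simply cite that $\w{Fil}\,M_{\Prism}(G)\subseteq M_{\Prism}(G)$ is part of the classification of \cite{AL19}). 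I expect this last point — pinning down enough of the structure of $\mathcal{O}^{\w{pris}}/\mathcal{N}^{\ge 1}\mathcal{O}^{\w{pris}}$ to force the Hom-vanishing, hence the injectivity into $H^2_{\Prism}(BG)$ — to be the only genuine obstacle; the rest is a mechanical transcription of the proof of \Cref{pri7}.
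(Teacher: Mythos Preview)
Your proof is correct and follows the same strategy as the paper's: descend along $*\to BG$, run the $E_2$ spectral sequence with coefficients in $\mathcal{N}^{\ge 1}\mathcal{O}^{\w{pris}}$, and verify the same two vanishing inputs. The only tactical difference is that the paper handles your ``genuine obstacle'' more cleanly by invoking the short exact sequence $0\to\mathcal{N}^{\ge 1}\mathcal{O}^{\w{pris}}\to\mathcal{O}^{\w{pris}}\to\mathcal{O}\to 0$ of \cite[Prop.\ 4.1.2.]{AL19}, which identifies the cokernel with the structure sheaf $\mathcal{O}$; since $\mathcal{O}$ is $p$-adically complete, both $\w{Hom}_{R_{\textbf{Qsyn}}}(\underline{G},\mathcal{O})=0$ (giving injectivity) and $\w{Ext}^i_{R_{\textbf{Qsyn}}}(\mathbb{Z},\mathcal{O})=0$ for $i\ge 1$ (giving the $\w{Ext}^i(\mathbb{Z},\mathcal{N}^{\ge 1}\mathcal{O}^{\w{pris}})$ vanishing for $i\ge 2$) are immediate, avoiding your detour through the Frobenius embedding into $\overline{\Prism}_S$.
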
{}

\begin{proof}By \cref{nag}, we have $\mathcal{N}^{\ge 1}R \Gamma_{\Prism}(BG) = R \Gamma (BG, \mathcal{N}^{\ge 1} \mathcal{O}^{\w{pris}}).$ Hence, analogous to \cref{pri2}, by descent along the effective epimorphism $* \to BG$, we obtain 

$$\mathcal{N}^{\ge 1}R \Gamma_{\Prism}(BG) \simeq R\w{Hom}_{D(R_{\textbf{Qsyn}})}(\mathbb{Z}[BG], \mathcal{N}^{\ge 1} \mathcal{O}^{\w{pris}}). $$
By the spectral sequence analogous to \cref{pri5}, we obtain that in order to prove $\mathcal{N}^{\ge 1} H^2_{\Prism} (BG) \simeq \w{Ext}^1_{R_{\textbf{Qsyn}}} (\underline{G}, \mathcal{N}^{\ge 1}\mathcal{O}^{\w{pris}})$, by \cref{pri6} it is enough to prove that $\w{Hom}_{R_{\textbf{Qsyn}}} (\underline{G} \wedge \underline{G}, \mathcal{N}^{\ge 1}\mathcal{O}^{\w{pris}})=0$ and $\w{Ext}^i_{R_{\textbf{Qsyn}}} (\mathbb{Z}, \mathcal{N}^{\ge 1}\mathcal{O}^{\w{pris}})=0$ for $i \ge 2$. By the proof of \cref{pri7}, in order to prove the first vanishing it is enough to prove that $\mathscr{H}om_{R_{\textbf{Qsyn}}}(\underline{G}, \mathcal{N}^{\ge 1}\mathcal{O}^{\w{pris}}))=0.$ By the injection $0 \to  \mathcal{N}^{\ge 1}\mathcal{O}^{\w{pris}} \to \mathcal{O}^{\w{pris}} $ of sheaves, the required vanishing follows from the fact that  $\mathscr{H}{om}_{R_{\mathbf{Qsyn}}}( \underline{G}, \mathcal{O}^{\w{pris}})= 0$ which was noted in \cref{pri7}. Now for proving $\w{Ext}^i_{R_{\textbf{Qsyn}}} (\mathbb{Z}, \mathcal{N}^{\ge 1}\mathcal{O}^{\w{pris}})=0$ for $i \ge 2$, using the exact sequence \cite[Prop. 4.1.2.]{AL19} $$0 \to \mathcal{N}^{\ge 1}\mathcal{O}^{\w{pris}} \to \mathcal{O}^{\w{pris}} \to \mathcal{O} \to 0,$$ where $\mathcal{O}$ denotes the structure sheaf on $R_{\textbf{Qsyn}},$ it is enough to show that $\w{Ext}^i_{R_{\textbf{Qsyn}}}(\mathbb{Z}, \mathcal{O})=0$ for $i \ge 1$ which follows from $p$-complete faithfully flat descent for bounded $p^\infty$-torsion rings \cite[Remark. 4.9.]{BMS19}. The inclusion $\mathcal{N}^{\ge 1} H^2_{\Prism} (BG) \subset H^2_{\Prism} (BG)$ now follows from the exact sequence above and the fact that $\w{Hom}^{}(\underline{G}, \mathcal{O})= 0$ since $G$ is $p$-divisible and $\mathcal{O}$ is $p$-adically complete.  \end{proof}{}

\vspace{1mm}
\begin{proof}[Proof of \cref{thmpri}] Follows from \cref{pri7} and \cref{pri8}. \end{proof}{}

\begin{remark}We note that $\w{Ext}^1(\mathbb{Z}, \mathcal{N}^{\ge 1} \mathcal{O}^{\w{pris}})=0$ as well. This follows from the exact sequence above and \cite[Thm. 3.4.6]{AL19}.

\end{remark}

\begin{remark}Similar to \cref{lol4}, it follows that for a $p$-divisible group $G$ over $R$, we have $H^1_{\Prism}(BG) \simeq 0. $

\end{remark}

\begin{remark}One can define prismatic cohomology for higher quasi-syntomic stacks as well. In particular one can talk about prismatic cohomology of the $n$-stacks $K(G,n)$ for a $p$-divisible group $G.$ Similar to \cref{higher}, one can prove that $H^{n+1}_{\Prism}(BG) \simeq M_{\Prism}(G)$ and $H^n_{\Prism}(BG) = 0$ for $1 \le i \le n.$

\end{remark}{}

Now we look at the case where $\widehat{A}$ is the $p$-adic completion of an abelian scheme $A$ over $R.$ In this situation one can consider the $p$-divisible group associated to $\widehat{A}$ which will be written as $ {A}[p^\infty]. $ We let $B\widehat{A}$ denote the classifying stack of $\widehat{A}$ which we define to be the $p$-adic completion of the stack $BA.$ The quasi-syntomic sheaf represented by $\widehat{A}$ will simply be written as $\underline{A}$. Then 

\begin{proposition}\label{pri81} We have $H^2 _{\Prism} (B\widehat{A}) \simeq \w{Ext}^1_{R_{\textbf{Qsyn}}}(\underline{{A}}, \mathcal{O}^{\w{pris}})$ and $\mathcal{N}^{\ge 1} H^2_{\Prism} (B \widehat{A}) \simeq \w{Ext}^1_{R_\textbf{QSyn}}(\underline{A}, \mathcal{N}^{\ge 1} \mathcal{O}^{\w{pris}}  ).$  

\end{proposition}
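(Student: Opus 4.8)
The plan is to run the arguments of \Cref{pri7} and \Cref{pri8} essentially verbatim, with the finite height $p$-divisible group replaced by $\widehat{A}$; the only new input needed is that $\underline{A}$ is still ``$p$-divisible enough'' for the relevant internal Hom sheaves to vanish. Concretely, since $[p]\colon A\to A$ is a smooth surjective (in particular faithfully flat) morphism of $R$-schemes, multiplication by $p$ is an epimorphism of quasi-syntomic sheaves $\underline{A}\to\underline{A}$; that is, $\underline{A}$ is a $p$-divisible abelian sheaf on $R_{\textbf{Qsyn}}$, with $p^{\infty}$-torsion subsheaf $\underline{A}[p^\infty]$ equal to the sheaf attached to the $p$-divisible group $A[p^\infty]$. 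This is precisely the property that was used in \Cref{pri7} to get $\mathscr{H}om_{R_{\textbf{Qsyn}}}(\underline{G},\mathcal{O}^{\w{pris}})=0$, so the same proof gives $\mathscr{H}om_{R_{\textbf{Qsyn}}}(\underline{A},\mathcal{O}^{\w{pris}})=0$, and hence also $\mathscr{H}om_{R_{\textbf{Qsyn}}}(\underline{A},\mathcal{N}^{\ge 1}\mathcal{O}^{\w{pris}})=0$ and $\w{Hom}_{R_{\textbf{Qsyn}}}(\underline{A},\mathcal{O})=0$, using $p$-adic completeness of $\mathcal{N}^{\ge 1}\mathcal{O}^{\w{pris}}$ and of $\mathcal{O}$.

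First I would set up the descent description. As in \Cref{pri2}, the map $*\to B\widehat{A}$ is an $\underline{A}$-torsor, hence a quasi-syntomic cover (because $\widehat{A}\to\w{Spf}\,R$ is $p$-completely faithfully flat and quasi-syntomic, being locally the $p$-completion of a smooth $R$-algebra), with \v{C}ech nerve $\underline{A}^{\times\bullet}$. \v{C}ech descent and \Cref{pri1} then give
\[
R\Gamma_{\Prism}(B\widehat{A})\simeq R\w{Hom}_{D(R_{\textbf{Qsyn}})}(\mathbb{Z}[B\widehat{A}],\mathcal{O}^{\w{pris}}),
\]
and, using \Cref{nag}, likewise $\mathcal{N}^{\ge 1}R\Gamma_{\Prism}(B\widehat{A})\simeq R\w{Hom}_{D(R_{\textbf{Qsyn}})}(\mathbb{Z}[B\widehat{A}],\mathcal{N}^{\ge 1}\mathcal{O}^{\w{pris}})$. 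As in \Cref{pri5} this yields spectral sequences $E_2^{i,j}=\w{Ext}^i_{R_{\textbf{Qsyn}}}(H^{-j}(\mathbb{Z}[B\widehat{A}]),\mathcal{O}^{\w{pris}})\Rightarrow H^{i+j}_{\Prism}(B\widehat{A})$ and its Nygaard analogue. The low cohomology sheaves of $\mathbb{Z}[B\widehat{A}]$ are computed exactly as in \Cref{pri6}: $H^0(\mathbb{Z}[B\widehat{A}])\simeq\mathbb{Z}$, $H^{-1}(\mathbb{Z}[B\widehat{A}])\simeq\underline{A}$, and $H^{-2}(\mathbb{Z}[B\widehat{A}])\simeq\underline{A}\wedge\underline{A}$, the last again via $H_2(G,\mathbb{Z})\simeq G\wedge G$ for an abelian group $G$ (\cite[Thm. 3.]{CM52}), applied sheaf-theoretically to $\underline{A}$.

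Now I would read off degree $2$. The only entries of total degree $2$ are $E_2^{2,0}=\w{Ext}^2_{R_{\textbf{Qsyn}}}(\mathbb{Z},\mathcal{O}^{\w{pris}})$, $E_2^{1,1}=\w{Ext}^1_{R_{\textbf{Qsyn}}}(\underline{A},\mathcal{O}^{\w{pris}})$, and $E_2^{0,2}=\w{Hom}_{R_{\textbf{Qsyn}}}(\underline{A}\wedge\underline{A},\mathcal{O}^{\w{pris}})$. The first vanishes since $\Prism_S$ is discrete for QRSP $S$; the last vanishes since $\w{Hom}(\underline{A}\wedge\underline{A},\mathcal{O}^{\w{pris}})\hookrightarrow\w{Hom}(\underline{A}\otimes\underline{A},\mathcal{O}^{\w{pris}})\simeq\w{Hom}(\underline{A},\mathscr{H}om(\underline{A},\mathcal{O}^{\w{pris}}))=0$, exactly as in \Cref{pri7}. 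As there are no differentials into or out of $(1,1)$ this gives $H^2_{\Prism}(B\widehat{A})\simeq\w{Ext}^1_{R_{\textbf{Qsyn}}}(\underline{A},\mathcal{O}^{\w{pris}})$. For the Nygaard statement I would repeat this with $\mathcal{N}^{\ge1}\mathcal{O}^{\w{pris}}$: the $(0,2)$ entry vanishes because $\mathscr{H}om(\underline{A},\mathcal{N}^{\ge1}\mathcal{O}^{\w{pris}})\hookrightarrow\mathscr{H}om(\underline{A},\mathcal{O}^{\w{pris}})=0$, and $E_2^{2,0}=\w{Ext}^2(\mathbb{Z},\mathcal{N}^{\ge1}\mathcal{O}^{\w{pris}})$ vanishes via the exact sequence $0\to\mathcal{N}^{\ge1}\mathcal{O}^{\w{pris}}\to\mathcal{O}^{\w{pris}}\to\mathcal{O}\to0$ together with $\w{Ext}^{\ge1}(\mathbb{Z},\mathcal{O}^{\w{pris}})=0$ and $\w{Ext}^{\ge1}(\mathbb{Z},\mathcal{O})=0$ (the latter by $p$-completely faithfully flat descent, \cite[Rmk. 4.9.]{BMS19}), just as in \Cref{pri8}; this gives $\mathcal{N}^{\ge1}H^2_{\Prism}(B\widehat{A})\simeq\w{Ext}^1_{R_{\textbf{Qsyn}}}(\underline{A},\mathcal{N}^{\ge1}\mathcal{O}^{\w{pris}})$. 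The inclusion $\mathcal{N}^{\ge1}H^2_{\Prism}(B\widehat{A})\subset H^2_{\Prism}(B\widehat{A})$ then follows by applying $\w{Ext}^{\bullet}_{R_{\textbf{Qsyn}}}(\underline{A},-)$ to the same short exact sequence and using $\w{Hom}(\underline{A},\mathcal{O})=0$ to see the transition map on $\w{Ext}^1$ is injective.

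I expect the only genuinely delicate point to be the identification of the $p$-adically completed stack $B\widehat{A}$ with $[\,*/\underline{A}\,]$ on the quasi-syntomic site, i.e. that $*\to B\widehat{A}$ is a quasi-syntomic cover with \v{C}ech nerve $\underline{A}^{\times\bullet}$; after that, the proof is a line-by-line transcription of \Cref{pri7} and \Cref{pri8}. A slightly more conceptual alternative would sidestep this by first proving $R\Gamma_{\Prism}(B\widehat{A})\simeq R\Gamma_{\Prism}(B(A[p^\infty]))$ and then quoting \Cref{thmpri}: since $\mathcal{O}^{\w{pris}}$ is $p$-complete it suffices that $\mathbb{Z}[B(A[p^\infty])]\to\mathbb{Z}[B\widehat{A}]$ be an equivalence after derived $p$-completion, which is a stalkwise statement about $p$-completed integral group homology of abelian groups on which multiplication by $p$ is surjective, provable exactly as in \Cref{camera} and \Cref{tv}; one then identifies $\w{Ext}^1(\underline{A}[p^\infty],\mathcal{O}^{\w{pris}})\simeq\w{Ext}^1(\underline{A},\mathcal{O}^{\w{pris}})$ (and similarly with $\mathcal{N}^{\ge1}$) from the short exact sequence $0\to\underline{A}[p^\infty]\to\underline{A}\to\varinjlim_{p}\underline{A}\to0$, whose cokernel is uniquely $p$-divisible and hence annihilated by $\w{Hom}(-,\mathcal{O}^{\w{pris}})$. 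Either way, the proposition also shows $H^2_{\Prism}(B\widehat{A})\simeq M_{\Prism}(A[p^\infty])$, consistently with \Cref{pri7}.
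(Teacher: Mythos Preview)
Your approach is correct and essentially identical to the paper's: both reduce to rerunning the arguments of \Cref{pri7} and \Cref{pri8} once one knows $\mathscr{H}om_{R_{\textbf{Qsyn}}}(\underline{A},\mathcal{O}^{\w{pris}})=0$; the paper cites \cite[Thm.\ 4.5.6]{AL19} for this vanishing, whereas you argue directly from $p$-divisibility of $\underline{A}$ exactly as in \Cref{pri7}, which is equally valid. One small inaccuracy: $[p]\colon A\to A$ is not smooth in mixed or positive characteristic (its kernel $A[p]$ need not be \'etale); it is, however, finite locally free and LCI, hence syntomic, which is all you need to conclude that multiplication by $p$ is an epimorphism of quasi-syntomic sheaves.
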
{}

\begin{proof}This follows exactly in the same way as the proof of \Cref{pri7} after noting that $\w{Ext}^i_{R_\textbf{Qsyn}}(\mathbb{Z}, \mathcal{O}^{\w{pris}})=0$ for $i \ge 1$ as before and $\mathscr{H}om_{R_{\textbf{Qsyn}}}(\underline{A}, \mathcal{O}^{\w{pris}})= 0$ by \cite[Thm. 4.5.6.]{AL19}. The second part follows. \end{proof}{}

\begin{proposition}\label{pri9}$H^2_{\Prism} (B \widehat{A}) $ is naturally isomorphic to the prismatic Dieudonn\'e module associated to the $p$-divisible group ${A}[p^\infty].$ Further, $\mathcal{N}^{\ge 1} H^2_{\Prism} (B \widehat{A}) \simeq \w{Fil} M_{\Prism} ( A[p^\infty]).$

\end{proposition}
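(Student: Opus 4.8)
The plan is to reduce Proposition \ref{pri9} to Proposition \ref{pri81} together with a comparison of the relevant $\w{Ext}^1$-groups for the two sheaves $\underline{A}$ and $\underline{A[p^\infty]}$. By \cref{pri81} we already have natural identifications $H^2_{\Prism}(B\widehat{A}) \simeq \w{Ext}^1_{R_{\textbf{Qsyn}}}(\underline{A},\mathcal{O}^{\w{pris}})$ and $\mathcal{N}^{\ge 1}H^2_{\Prism}(B\widehat{A}) \simeq \w{Ext}^1_{R_{\textbf{Qsyn}}}(\underline{A},\mathcal{N}^{\ge 1}\mathcal{O}^{\w{pris}})$, while by definition $M_{\Prism}(A[p^\infty]) = \w{Ext}^1_{R_{\textbf{Qsyn}}}(\underline{A[p^\infty]},\mathcal{O}^{\w{pris}})$ and $\w{Fil}\,M_{\Prism}(A[p^\infty]) = \w{Ext}^1_{R_{\textbf{Qsyn}}}(\underline{A[p^\infty]},\mathcal{N}^{\ge 1}\mathcal{O}^{\w{pris}})$. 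So it suffices to show that the natural inclusion $\underline{A[p^\infty]} \hookrightarrow \underline{A}$ of quasi-syntomic sheaves induces isomorphisms on $\w{Ext}^1_{R_{\textbf{Qsyn}}}(-,\mathcal{F})$ for $\mathcal{F} \in \{\mathcal{O}^{\w{pris}},\mathcal{N}^{\ge 1}\mathcal{O}^{\w{pris}}\}$; the compatibility of these isomorphisms with the inclusion $\mathcal{N}^{\ge 1}\mathcal{O}^{\w{pris}} \hookrightarrow \mathcal{O}^{\w{pris}}$ (hence the claim that $\mathcal{N}^{\ge 1}H^2_{\Prism}(B\widehat{A})$ sits inside $H^2_{\Prism}(B\widehat{A})$ as $\w{Fil}\,M_{\Prism}(A[p^\infty])$) is automatic from functoriality in the coefficient sheaf, using that the relevant inclusions are already known to be inclusions by \cref{pri8}.

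First I would set up the short exact sequence of abelian sheaves on $R_{\textbf{Qsyn}}$
$$0 \longrightarrow \underline{A[p^\infty]} \longrightarrow \underline{A} \longrightarrow \underline{Q} \longrightarrow 0, \qquad \underline{Q} := \varinjlim\big(\underline{A}\xrightarrow{\ p\ }\underline{A}\xrightarrow{\ p\ }\cdots\big),$$
using that multiplication by $p$ on $\underline{A}$ is an epimorphism of quasi-syntomic sheaves (it arises from the finite locally free surjection $[p]\colon A\to A$, $p$-adically completed, which is a quasi-syntomic cover), that filtered colimits are exact in this topos, and that the kernel of $\underline{A}\to\underline{Q}$ is $\bigcup_n\underline{A[p^n]}=\underline{A[p^\infty]}$. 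The point of $\underline{Q}$ is that, being a telescope along multiplication by $p$, it is uniquely $p$-divisible: $p$ acts invertibly on it.

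Next I would prove the vanishing $R\w{Hom}_{R_{\textbf{Qsyn}}}(\underline{Q},\mathcal{F})=0$ for $\mathcal{F}\in\{\mathcal{O}^{\w{pris}},\mathcal{N}^{\ge 1}\mathcal{O}^{\w{pris}}\}$. Both of these sheaves are derived $p$-complete: for $\mathcal{O}^{\w{pris}}$ this is the $p$-adic completeness recorded in the proof of \cref{pri7}, and for $\mathcal{N}^{\ge 1}\mathcal{O}^{\w{pris}}$ it follows from the exact sequence $0\to\mathcal{N}^{\ge 1}\mathcal{O}^{\w{pris}}\to\mathcal{O}^{\w{pris}}\to\mathcal{O}\to0$ of \cite[Prop.\ 4.1.2.]{AL19} since $\mathcal{O}^{\w{pris}}$ and $\mathcal{O}$ are both derived $p$-complete. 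Writing $\mathcal{F}\simeq R\varprojlim_n\mathcal{F}/p^n$ and commuting $R\w{Hom}(\underline{Q},-)$ past $R\varprojlim$, it is enough to observe that $R\w{Hom}_{R_{\textbf{Qsyn}}}(\underline{Q},\mathcal{F}/p^n)=0$: the operator $p^n$ acts on it both as zero (the target is killed by $p^n$) and invertibly (it is invertible on $\underline{Q}$), so the complex vanishes. Feeding this vanishing into the long exact sequence obtained by applying $R\w{Hom}_{R_{\textbf{Qsyn}}}(-,\mathcal{F})$ to the short exact sequence above gives the desired natural isomorphisms $\w{Ext}^1_{R_{\textbf{Qsyn}}}(\underline{A},\mathcal{F})\xrightarrow{\ \sim\ }\w{Ext}^1_{R_{\textbf{Qsyn}}}(\underline{A[p^\infty]},\mathcal{F})$, and combining with \cref{pri81} and the definitions proves both assertions.

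I expect the main obstacle to be the sheaf-theoretic bookkeeping in the second step, namely that $\underline{A}/\underline{A[p^\infty]}$ really is uniquely $p$-divisible on the quasi-syntomic site attached to the $p$-adic formal scheme $\widehat{A}$; here one must be careful to interpret the surjectivity of $[p]$ and the formation of the quotient in the sheaf sense rather than naively on sections, and to reconcile $\underline{A}$ (the sheaf of $\widehat{A}$) with the telescope description of $\underline{Q}$. An alternative that bypasses this is to invoke directly the comparison in \cite{AL19} between the prismatic cohomology of an abelian scheme and the prismatic Dieudonn\'e module of its $p$-divisible group — in the spirit of \cite[Thm.\ 4.5.6.]{AL19}, which already underlies the proof of \cref{pri81} — identifying $\w{Ext}^1_{R_{\textbf{Qsyn}}}(\underline{A},\mathcal{O}^{\w{pris}})$ with $M_{\Prism}(A[p^\infty])$ and its Nygaard filtration with $\w{Fil}\,M_{\Prism}(A[p^\infty])$; with that input the proposition is immediate from \cref{pri81}.
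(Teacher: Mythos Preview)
Your argument is correct and takes a genuinely different route from the paper. Both proofs start from \cref{pri81} and reduce to showing that the inclusion $\underline{A[p^\infty]}\hookrightarrow\underline{A}$ induces isomorphisms on $\w{Ext}^1_{R_{\textbf{Qsyn}}}(-,\mathcal{F})$ for $\mathcal{F}\in\{\mathcal{O}^{\w{pris}},\mathcal{N}^{\ge 1}\mathcal{O}^{\w{pris}}\}$. The paper handles this by passing to sheaf $\mathscr{E}xt$'s: it asserts isomorphisms $\mathscr{E}xt^1_{R_{\textbf{Qsyn}}}(\underline{A[p^\infty]},\mathcal{F})\simeq\mathscr{E}xt^1_{R_{\textbf{Qsyn}}}(\underline{A},\mathcal{F})$ (implicitly drawing on \cite{AL19}) and then uses the vanishing of $\mathscr{H}om_{R_{\textbf{Qsyn}}}(\underline{A[p^\infty]},\mathcal{O}^{\w{pris}})$ and $\mathscr{H}om_{R_{\textbf{Qsyn}}}(\underline{A},\mathcal{O}^{\w{pris}})$ together with the local-to-global spectral sequence to conclude. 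Your approach instead works directly with global $\w{Ext}$'s via the exact sequence $0\to\underline{A[p^\infty]}\to\underline{A}\to\underline{Q}\to 0$ and the observation that $R\w{Hom}(\underline{Q},\mathcal{F})=0$ whenever $\mathcal{F}$ is derived $p$-complete, since $p$ acts invertibly on $\underline{Q}$. This is more self-contained: it avoids invoking the sheaf $\mathscr{E}xt$ comparison from \cite{AL19} and the local-to-global machinery, at the cost of the sheaf-theoretic bookkeeping you flag (which you handle correctly: surjectivity of $[p]$ on $\underline{A}$ as a quasi-syntomic sheaf is exactly what makes $\underline{A}\to\underline{Q}$ an epimorphism with kernel $\underline{A[p^\infty]}$). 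The ``alternative'' you sketch at the end is essentially the paper's line of argument.
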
{}

\begin{proof}Using \Cref{pri81}, it is enough to prove that $\w{Ext}^1_{R_{\textbf{Qsyn}}}(\underline{A}, \mathcal{O}^{\w{pris}})$ is isomorphic to $\w{Ext}^1_{R_{\textbf{Qsyn}}}({A}[p^\infty], \mathcal{O}^{\w{pris}}).$ This follows from the isomorphisms  $$\mathscr{E}xt^1_{R_{\textbf{Qsyn}}}({A}[p^\infty],  \mathcal{O}^{\w{pris}}) \simeq \mathscr{E}xt^1_{R_{\textbf{Qsyn}}}(\underline{A},  \mathcal{O}^{\w{pris}}) $$and $$ \mathscr{E}xt^1_{R_{\textbf{Qsyn}}}({A}[p^\infty], \mathcal{N}^{\ge 1} \mathcal{O}^{\w{pris}}) \simeq \mathscr{E}xt^1_{R_{\textbf{Qsyn}}}(\underline{A}, \mathcal{N}^{\ge 1} \mathcal{O}^{\w{pris}})$$ after applying the global section functor and noting that $\mathscr{H}om_{R_{\textbf{Qsyn}}}({A}[p^\infty],  \mathcal{O}^{\w{pris}}) \simeq 0$ and $ \mathscr{H}om_{R_{\textbf{Qsyn}}}(\underline{A},  \mathcal{O}^{\w{pris}}) \simeq 0.$ \end{proof}{}

As in \cref{ab}, if $\widehat{A}$ is the $p$-adic completion of an abelian scheme $A$ over $R$, we can explicitly compute $H^* _{\Prism}(B (\widehat{A})).$ 

\begin{proposition}\label{pri10}We have a natural isomorphism $H^{2*}_{\Prism}(B \widehat{A}) \cong \w{Sym}^*(H^1_{\Prism}(\widehat{A}))$, and $H^i_{\Prism}(B (\widehat{A}))= 0$ for odd $i.$

\end{proposition}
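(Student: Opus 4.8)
The plan is to run the proof of \cref{cw} in the prismatic setting. Since $A$ is smooth over $R$, the $p$-adic formal scheme $\widehat{A}$ is quasi-syntomic over $R$, so $* \to B\widehat{A}$ is a quasi-syntomic cover whose \v{C}ech nerve is the simplicial quasi-syntomic sheaf $\underline{\widehat{A}}^{\bullet}$. Running the \v{C}ech descent of \cref{pri2} and \cref{pri5} for this cover (using \cref{pri1} to express each term $R\Gamma_{\Prism}(\widehat{A}^i)$) produces a spectral sequence
\[ E_1^{i,j} = H^j_{\Prism}(\widehat{A}^i) \implies H^{i+j}_{\Prism}(B\widehat{A}), \]
where $\widehat{A}^i$ denotes the $i$-fold fibre product over $\w{Spf}\,R$ and $\widehat{A}^0 = *$. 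Since $R$ is QRSP, $\Prism_R$ is discrete, so $H^j_{\Prism}(\w{Spf}\,R) = 0$ for $j > 0$ and equals $\Prism_R$ for $j = 0$; this plays the role of the input $H^i_{crys}(\w{Spec}\,k) = 0$ used throughout \cref{ab}.

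The second step is to feed in the structure of prismatic cohomology of abelian schemes: $H^*_{\Prism}(\widehat{A}) \cong \bigwedge^* H^1_{\Prism}(\widehat{A})$ as a graded $\Prism_R$-algebra, with $H^1_{\Prism}(\widehat{A})$ finite locally free over $\Prism_R$. Combined with the K\"unneth formula for prismatic cohomology (\cite{BS19}), which identifies the cohomology of a product with the underived tensor product of cohomologies since the latter are finite locally free, one obtains $H^j_{\Prism}(\widehat{A}^i) \cong \bigwedge^j\big(\bigoplus_{s=1}^{i} H^1_{\Prism}(\widehat{A})\big)$ compatibly with the simplicial structure. Hence, for each fixed $j$, the row $E_1^{\bullet,j}$ is the alternating face complex of the cosimplicial $\Prism_R$-module
\[ \bigwedge^j \big(\cosimp {H^1_{\Prism}(*)}{H^1_{\Prism}(\widehat{A})}{H^1_{\Prism}(\widehat{A}\times\widehat{A})} \big), \]
exactly as in \cref{ab}. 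Writing $V := H^1_{\Prism}(\widehat{A})$, a finite locally free (in particular flat) $\Prism_R$-module, the computations of \cref{hom} and \cref{hom1} are pure (non-abelian) homological algebra over the base ring and apply verbatim: the explicit contracting homotopy of \cref{hom} gives $E_1^{\bullet,1} \simeq V[-1]$, and Illusie's d\'ecalage formula upgrades this to $E_1^{\bullet,j} \simeq \w{Sym}^j(V)[-j]$ for all $j \ge 0$. Therefore $E_2^{i,j} = \w{Sym}^j(V)$ when $i = j$ and $0$ otherwise; the $E_2$-page is concentrated on the diagonal, so the spectral sequence degenerates there with no extension problems, yielding $H^{2k}_{\Prism}(B\widehat{A}) \cong \w{Sym}^k(H^1_{\Prism}(\widehat{A}))$ and $H^i_{\Prism}(B\widehat{A}) = 0$ for odd $i$, naturally in $A$.

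The only genuinely non-formal ingredient is the structural input of the second step — that $H^*_{\Prism}(\widehat{A})$ is the exterior algebra on a finite locally free $H^1$, together with the K\"unneth isomorphism over $\Prism_R$ — which was taken as known in the crystalline case (\cref{ab}). I would obtain it either from the Hodge--Tate comparison (reducing modulo $I$, where freeness and the classical identity $H^*_{\w{dR}}(A) \cong \bigwedge^* H^1_{\w{dR}}(A)$ are available, then lifting by flatness and $(p,I)$-completeness), or from the structure theory of finite graded-commutative Hopf algebras applied to the Hopf algebra object $R\Gamma_{\Prism}(\widehat{A})$ in $D(\Prism_R)$, which is a perfect complex since $\widehat{A}$ is proper and smooth over $\w{Spf}\,R$. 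Granting this, the remainder follows the proof of \cref{cw} word for word.
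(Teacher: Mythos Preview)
Your proposal is correct and follows essentially the same route as the paper: the paper's proof simply states that the argument of \cref{ab} goes through verbatim once one inputs the $E_1$ spectral sequence of \cref{pri5}, the K\"unneth formula for prismatic cohomology \cite[Cor.~3.5.2]{AL19}, and the exterior-algebra computation of $H^*_{\Prism}(\widehat{A})$ from \cite[Cor.~4.5.8]{AL19}. The only difference is that where the paper cites \cite{AL19} for this last structural input, you instead sketch how to obtain it via Hodge--Tate comparison or Hopf-algebra methods; the rest of your argument (the explicit homotopy of \cref{hom}, d\'ecalage, and degeneration on the diagonal) is exactly what the paper invokes by pointing back to \cref{ab}.
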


\begin{proof}This follows exactly in the same way as in the proof in the crystalline case in \Cref{ab} from the $E_1$ spectral sequence in \cref{pri5}, K\"unneth formula in prismatic cohomology \cite[Corollary 3.5.2.]{AL19} and the calculations of Prismatic cohomology of abelian varieties in Corollary 4.5.8. loc. cit. 
\end{proof}{}

\newpage

\end{document}